\documentclass[a4paper,11pt,leqno]{amsart}
\usepackage{amsmath,amscd,amsfonts,amssymb}
\usepackage{latexsym}
\usepackage{graphics}
\usepackage{epsfig}
\usepackage[french,english]{babel}

\numberwithin{equation}{section}
\newtheorem{theorem}{Th\'eor\`eme}[section]
\newtheorem{prop}[theorem]{Proposition}
\newtheorem{cor}[theorem]{Corollaire}
\newtheorem{lemma}[theorem]{Lemme}

\DeclareMathOperator{\tr}{tr}

\def\into{\hookrightarrow}
\newcommand{\Mbar}{\overline{M}}
\newcommand{\gbar}{\overline{g}}
\newcommand{\nabbar}{\overline{\nabla}}
\def\binf{\partial_\infty M}
\def\bint{\partial_0 M}
\newcommand{\bR}{\mathbb{R}}
\newcommand{\phitil}{\widetilde{\varphi}}
\newcommand{\rlie}{\mathring{\mathcal{L}}}
\DeclareMathOperator{\im}{Im}
\DeclareMathOperator{\divg}{div}

\newcommand{\scal}{\mathrm{Scal}}
\newcommand{\hscal}{\widehat{\scal}}
\newcommand{\ric}{\mathrm{Ric}}

\newcommand{\riemuddd}[4]{R^{#1}_{\phantom{#1} #2 #3 #4}}
\newcommand{\ricud}[2]{\ric^{#1}_{\phantom{#1} #2}}
\newcommand{\ricuu}[2]{\ric^{#1 #2}}
\newcommand{\hess}{\mathrm{Hess}}

\title[Courbure scalaire prescrite et contraintes sur une vari\'et\'e AH]{De l'\'equation de prescription de courbure scalaire aux \'equations de contrainte en relativit\'e g\'en\'erale sur une vari\'et\'e asymptotiquement hyperbolique}

\selectlanguage{francais}
\author{Romain Gicquaud}
\address{Adresse actuelle : \newline
Institut de Math\'ematiques et de Mod\'elisation de Montpellier \newline
UMR 5149 CNRS - Universit\'e Montpellier II \newline
Case Courrier 051 - Place Eug\`ene Bataillon \newline
34095 Montpellier, France \newline }
\email{Romain.Gicquaud@math.univ-montp2.fr}

\graphicspath{{images/}}

\begin{document}
\date{}

\maketitle

\selectlanguage{francais}
\begin{abstract}
Cet article est consacr\'e \`a l'\'etude de deux probl\`emes sur les vari\'et\'es asymptotiquement hyperboliques avec un bord interne. Dans un premier temps, nous examinons le probl\`eme de la prescription de la courbure scalaire pour des conditions au bord interne de Dirichlet et de prescription de la courbure moyenne. Nous appliquons ensuite les r\'esultats obtenus \`a l'\'etude de l'\'equation de Lichnerowicz avec une condition au bord interne d'horizon apparent (pass\'e ou futur). La derni\`ere partie traite de la construction de TT-tenseurs. On construit ainsi des donn\'ees initiales, \`a courbure moyenne constante, pour  les \'equations d'Einstein du vide.
\end{abstract}

\selectlanguage{english}
\begin{abstract}
Two problems concerning asymptotically hyperbolic manifolds with an inner boundary are studied. First, we study scalar curvature presciption with either Dirichlet or mean curvature prescription interior boundary condition. Then we apply those results to the Lichnerowicz equation with (future or past) apparent horizon interior boundary condition. In the last part we show how to construct TT-tensors. Thus we obtain Cauchy data with constant mean curvature for Einstein vacuum equations.
\end{abstract}
\medskip

\selectlanguage{french}
\noindent {\bf Mots Clefs} : Vari\'et\'es asymptotiquement hyperboliques,
courbure scalaire conforme, \'equation de Lichnerowicz, TT-tenseurs,
\'equations  de contraintes, horizons apparents.
\\
\newline
{\bf 2000 MSC} : 35Q75, 53C21, 35J65, 35J70.
\\
\newline

\section{Introduction}

Dans cet article, nous \'etudions deux probl\`emes : celui de la prescription de la courbure scalaire sur une vari\'et\'e asymptotiquement hyperbolique et la construction de solutions des \'equations de contrainte en relativit\'e g\'en\'erale contenant des horizons apparents.\\

Dans la premi\`ere partie, nous rappelons les r\'esultats de \cite{GrahamLee} et \cite{LeeFredholm}, d\'emontrons des variantes du principe du maximum g\'en\'eralis\'e tel qu'il appara\^it dans \cite{GrahamLee} et finalement nous \'etendons la m\'ethode de monotonie au cas de sur et sous-solutions au sens des distributions. Ces r\'esultats nous seront utiles pour r\'esoudre les \'equations de prescription de la courbure scalaire et de Lichnerowicz.\\

Pour le probl\`eme de la prescription de la courbure scalaire, nous g\'en\'eralisons les r\'esultats de \cite{DelayCourbScal} au cas des vari\'et\'es asymptotiquement hyperboliques avec un bord interne (\'eventuellement vide). Rappelons que pour une m\'etrique riemannienne $g$ de courbure scalaire $\scal$, si $\varphi$ est une fonction positive alors la courbure scalaire $\hscal$ de $\hat{g} = \varphi^\kappa g$, avec $\kappa=\frac{4}{n-2}$ est donn\'ee par (voir par exemple \cite{Besse}) :

$$\hscal = \varphi^{-\kappa-1} \left( - \frac{4(n-1)}{n-2} \Delta \varphi + \scal~\varphi\right).$$

Par convention, le laplacien est donn\'e par $\Delta f = g^{ij}\left(\partial_i \partial_j f - \Gamma^k_{ij} \partial_k f\right)$. Nos r\'esultats principaux sont les th\'eor\`emes \ref{PrescScalDiri} et \ref{PrescScalH} :

\begin{theorem} Soit $(M^n, g)$ une vari\'et\'e asymptotiquement hyperbolique de classe $\mathcal{C}^{l, \beta}$. Soient deux fonctions $\scal,~\hscal \in \mathcal{C}^{k-2, \alpha}_0(M)$ ($2 < k+\alpha \leq l + \beta$, $0 < \alpha < 1$) avec $\hscal < 0$ telles que $\scal,~\hscal \to_{\binf} -n(n-1)$.

\begin{enumerate}
\item \textbf{Probl\`eme de Dirichlet le long de $\bint$} (Th\'eor\`eme \ref{PrescScalDiri}): Soit $\varphi_0 > 0$ une fonction $\mathcal{C}^{k, \alpha}$ sur $\partial_0 M$ alors il existe une solution unique $\varphi \in~\mathcal{C}^{k, \alpha}_0$ telle que $\varphi > 0$ et $\varphi \to_{\binf} 1$, au probl\`eme de Dirichlet :

\begin{equation}
\label{EqPrescScalDiriIntro}
\left\lbrace
\begin{array}{rcl}
-\frac{4 (n-1)}{n-2} \Delta \varphi + \scal~\varphi & = & \hscal~\varphi^{\kappa+1}\\
\varphi & = & \varphi_0\quad\mathrm{sur}~\bint.\\
\end{array}
\right.
\end{equation}

\item \textbf{Prescription de la courbure moyenne de $\bint$} (Th\'eor\`eme \ref{PrescScalH}): Soient $H, \widehat{H} \in \mathcal{C}^{k-1, \alpha}\left(\bint\right)$, avec $\widehat{H} \geq 0$, il existe une unique solution $\varphi \in \mathcal{C}^{k, \alpha}_0$ telle que $\varphi > 0$ et $\varphi \to_{\binf} 1$ au probl\`eme de la prescription de la courbure moyenne de $\bint$ :

\begin{equation}
\label{CourbMoyIntro}
\left\{
\begin{array}{l}
-\frac{4(n-1)}{n-2} \Delta \varphi + \scal~\varphi = \hscal~\varphi^{\kappa+1}\\
\frac{2}{n-1} \nabla_\nu \varphi - H \varphi = - \widehat{H} \varphi^{\frac{\kappa}{2}+1} \text{ le long de }\bint,
\end{array}
\right.
\end{equation}
\end{enumerate}
o\`u $\nu$ d\'esigne la normale \`a $\bint$ sortant de $M$. De plus si $\scal-\hscal \in \mathcal{C}^{k-2, \alpha}_\delta$ pour un certain $\delta \in [0; n[$ alors $\varphi-1 \in \mathcal{C}^{k, \alpha}_\delta$.
\end{theorem}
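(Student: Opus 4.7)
L'approche naturelle est la méthode des sous- et sur-solutions, motivée par le fait que $\hscal < 0$ rend la non-linéarité $\varphi \mapsto -\hscal\,\varphi^{\kappa+1}$ monotone croissante en $\varphi > 0$. La section préliminaire ayant étendu le schéma d'itération monotone au cas des barrières distributionnelles et démontré un principe du maximum adapté, le contenu effectif de la démonstration se ramène à (a) la construction d'une paire $\varphi_- \leq \varphi_+$, (b) l'unicité par principe du maximum, et (c) l'amélioration de la décroissance par théorie de Fredholm.

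Pour la sur-solution, on observe qu'à l'infini, où $\scal$ et $\hscal$ sont proches de $-n(n-1)$, la constante $1+\epsilon$ vérifie ponctuellement $\scal(1+\epsilon) \geq \hscal(1+\epsilon)^{\kappa+1}$ pour tout $\epsilon > 0$, puisque $(1+\epsilon)^\kappa > 1$ et $\hscal < 0$. Près de $\bint$, où $\scal$ peut être positive et où la condition au bord n'est pas satisfaite, on recolle cette barrière à une grande constante $C$ (choisie $\geq \varphi_0$ dans le cas Dirichlet ; dans le cas de la courbure moyenne, à une fonction explosant convenablement pour dominer l'inégalité de Robin) le long d'une hypersurface $\{\rho = \rho_0\}$. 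Le coin s'ouvrant dans le bon sens, la contribution de Dirac à $-\Delta\varphi_+$ est positive ou nulle, ce qui rend $\varphi_+$ sur-solution au sens distributionnel. La sous-solution se construit symétriquement à partir de $1-\epsilon$ à l'infini et d'une fonction positive convenable près de $\bint$, la flexibilité distributionnelle étant à nouveau indispensable pour contourner les régions où $\scal$ serait positive.

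L'itération monotone fournit alors une solution $\varphi \in \mathcal{C}^{k,\alpha}_0$ avec $\varphi_- \leq \varphi \leq \varphi_+$, donc $\varphi > 0$ et $\varphi \to 1$ au bord à l'infini. L'unicité s'obtient en posant $w = \varphi_2 - \varphi_1$ pour deux solutions : la formule des accroissements finis donne
\[
-\tfrac{4(n-1)}{n-2}\,\Delta w + \bigl(\scal - (\kappa+1)\hscal\,\xi^\kappa\bigr)\,w = 0
\]
avec $\xi$ entre $\varphi_1$ et $\varphi_2$. Le coefficient d'ordre zéro tend vers $\tfrac{4n(n-1)}{n-2} > 0$ à l'infini, et, combiné à la condition au bord sur $w$ (nulle pour Dirichlet, ou de type Robin au bon signe grâce à $\widehat{H} \geq 0$), le principe du maximum généralisé force $w \equiv 0$.

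Pour la décroissance, on pose $\varphi = 1 + u$ et on réarrange en $\mathcal{L} u = (\hscal - \scal) + N(u)$, avec $\mathcal{L} = -\tfrac{4(n-1)}{n-2}\Delta + \scal - (\kappa+1)\hscal$ linéarisée en $1$ et $N(u) = O(u^2)$. Les racines indicielles de $\mathcal{L}$ sont $-1$ et $n$, en dehors de $[0;n[$, de sorte que la théorie de Fredholm de \cite{LeeFredholm} améliore $u \in \mathcal{C}^{k,\alpha}_0$ en $u \in \mathcal{C}^{k,\alpha}_\delta$ dès que le membre de droite est dans $\mathcal{C}^{k-2,\alpha}_\delta$ pour $\delta \in [0;n[$. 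L'obstacle principal est la construction des barrières : concilier simultanément donnée au bord, éventuelle positivité locale de $\scal$ et normalisation asymptotique à $1$ ; le cas de la courbure moyenne est le plus délicat, car aucune constante ne peut satisfaire l'inégalité de Robin dès que $\widehat{H} > 0$, et il faut donc produire à la main une fonction au comportement singulier adéquat près de $\bint$.
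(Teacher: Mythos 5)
Votre stratégie générale (barrières distributionnelles, itération monotone, principe du maximum pour l'unicité, théorie elliptique pour la décroissance) est bien celle du papier, mais plusieurs étapes sont incorrectes ou incomplètes telles que vous les formulez.

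\textbf{Unicité.} Votre linéarisation par le théorème des accroissements finis conduit à l'équation $-\tfrac{4(n-1)}{n-2}\Delta w + \bigl(\scal - (\kappa+1)\hscal\,\xi^\kappa\bigr)w = 0$, mais le coefficient d'ordre zéro n'a aucune raison d'être positif ou nul près de $\bint$ : $\scal$ peut y être très négatif et rien ne garantit $\scal \geq (\kappa+1)\hscal\,\xi^\kappa$. Le principe du maximum (classique ou généralisé) ne s'applique donc pas directement. Le papier contourne cela en posant $\varphi = e^\theta$ : l'équation satisfaite par $\theta_1 - \theta_2$ a alors pour coefficient d'ordre zéro $-\hscal\,\kappa\int_0^1 e^{\kappa\theta_x}dx$, qui est strictement positif \emph{partout} parce que $\hscal < 0$, sans hypothèse sur le signe de $\scal$. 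Cette substitution logarithmique est essentielle et vous manque.

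\textbf{Décroissance.} Votre réarrangement $\mathcal{L} u = (\hscal - \scal) + N(u)$ avec $N(u) = O(u^2)$ est circulaire : on part de $u \in \mathcal{C}^{k,\alpha}_0$ seulement (borné, pas décroissant), donc $N(u)$ n'a aucune décroissance, et le membre de droite n'est pas dans $\mathcal{C}^{k-2,\alpha}_\delta$. La théorie de Fredholm ne démarre pas. Le papier procède autrement : il construit d'abord des barrières explicites $1 \pm K\rho^\delta$ (lemme \ref{compBord}) et les compare à $\varphi$ via la même substitution $e^\theta$, ce qui donne directement $\varphi - 1 \in \mathcal{C}^{0,0}_\delta$. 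Il réécrit ensuite l'équation \emph{exactement} (et non par développement de Taylor) sous la forme $-\tfrac{4(n-1)}{n-2}\Delta(\varphi - 1) + \bigl(\scal - \hscal(\kappa+1)\int_0^1 (x\varphi + 1-x)^\kappa dx\bigr)(\varphi - 1) = \hscal - \scal$, où le membre de droite est dans $\mathcal{C}^{k-2,\alpha}_\delta$ sans reste quadratique ; la régularité elliptique donne alors le gain en régularité à poids fixé.

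\textbf{Comportement à l'infini.} Vous annoncez $\varphi \to 1$ comme conséquence immédiate de $\varphi_- \leq \varphi \leq \varphi_+$, mais pour un $\epsilon$ fixé cela ne donne que $1 - \epsilon \leq \liminf_{\binf}\varphi \leq \limsup_{\binf}\varphi \leq 1 + \epsilon$, et la solution produite par l'itération dépend a priori des barrières choisies. Passer à la limite $\epsilon \to 0$ suppose déjà l'unicité, qui elle-même requiert de savoir que $\varphi \to 1$. Le papier coupe ce n{\oe}ud grâce au lemme \ref{limsup} (principe du maximum généralisé sur $\binf$), qui donne directement $\limsup_{\binf}\varphi \leq 1$ et $\liminf_{\binf}\varphi \in \{0\} \cup [1, \infty)$ ; la sous-solution $\max\{\sigma - \rho, 0\}$ — nulle près de $\bint$, donc trivialement compatible avec Dirichlet et Robin, et valant $\sigma > 0$ sur $\binf$ — sert uniquement à exclure la branche $\liminf = 0$, pas à forcer $\liminf \geq 1$. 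C'est nettement plus simple que le recollement que vous proposez.
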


Signalons que le probl\`eme de la prescription de la courbure scalaire sur une vari\'et\'e \`a courbure n\'egativement pinc\'ee a \'et\'e trait\'e dans \cite{AvilesMcOwen}, ce cadre est cependant trop g\'en\'eral pour esp\'erer un contr\^ole de la fonction $\varphi$ \` a l'infini. Pour des valeurs $\delta \geq n$, la fonction $\varphi$ solution de l'\'equation de prescription de la courbure scalaire n'a plus n\'ecessairement un comportement asymptotique aussi fort que celui de $\hscal - \scal$, une obstruction importante est alors fournie par la masse \cite{Wang, ChruscielHerzlich} (voir \'egalement \cite{MinOo}).\\

Dans une seconde partie, nous montrons l'existence de solutions \`a l'\'equation de Lichnerowicz avec la condition au bord d'horizon apparent (pass\'e et futur). Rappelons que l'\'equation de Lichnerowicz (correspondant \`a la premi\`ere ligne de \ref{EqLichHorAppIntro}) appara\^it naturellement dans la construction de donn\'ees initiales \`a courbure moyenne constante pour les \'equations d'Einstein du vide (voir section 4) :

\begin{theorem}[Construction de solutions de l'\'equation de Lichnerowicz contenant des horizons apparents] Soient $M$ une vari\'et\'e asymptotiquement hyperbolique de classe $\mathcal{C}^{l, \beta}$ avec $l+\beta \geq 2$ et $\tau \in [-1; 1]$. Fixons, pour chaque composante connexe $\sigma_i$ de $\bint$, un r\'eel $\epsilon_i = \pm 1$ ($\epsilon_i = -1$ pour un horizon apparent futur et $\epsilon_i = +1$ pour un horizon apparent pass\'e). Supposons de plus, lorsque $\tau = \pm 1$, que, pour tout $i$ tel que $\epsilon_i = -\tau$, l'invariant de Yamabe $\mathcal{Y}(\sigma_i) > 0$. Soit $L \in \mathcal{C}^{k-1, \alpha}_0(M, T^{*2}M)$ un 2-tenseur sym\'etrique de trace nulle tel que $|L|^2_g \to 0$ au voisinage de $\binf$ et tel que $\epsilon_i L_{\nu_i \nu_i} \geq 0$ sur $\bint$. Il existe une solution $\varphi > 0$ au probl\`eme :

\begin{equation}
\label{EqLichHorAppIntro}
\left\lbrace
\begin{aligned}
-\frac{4 (n-1)}{n-2} \Delta \varphi + \scal~ \varphi + n(n-1) \varphi^{\kappa+1} - |L|_g^2 \varphi^{-\kappa-3} = 0 & \quad\text{sur $\mathring{M}$}\\
\frac{2(n-1)}{n-2}\nabla_{\nu_i}\varphi - H_i \varphi = \epsilon_i \left[L_{\nu_i\nu_i}\varphi^{-1-\frac{\alpha}{2}} - (n-1)\tau~\varphi^{\frac{\alpha}{2}+1}\right] & \quad\text{sur $\sigma_i$ pour tout $i$},
\end{aligned}
\right.
\end{equation}
o\`u $\scal$ est le scalaire de courbure de $(M, g)$ et $\nu_i$ la normale \`a $\sigma_i$ sortante (i.e. dirig\'ee vers l'int\'erieur de l'horizon apparent) avec $\varphi \to 1$ au voisinage de $\binf$. De plus si $\scal + n(n-1) \in \mathcal{C}^{k-2, \alpha}_\delta$ et $|L|^2 \in \mathcal{C}^{k-2, \alpha}_\delta$ avec $\delta \in [0; n)$, alors $\varphi-1\in \mathcal{C}^{k, \alpha}_\delta$.
\end{theorem}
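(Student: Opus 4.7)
The plan is to prove existence by the sub- and super-solution method, applying the distributional version of the monotone iteration established earlier in the paper, and then to refine the asymptotic behaviour via the Fredholm theory of \cite{GrahamLee, LeeFredholm}. Writing the Lichnerowicz operator
\[
\mathcal{L}\varphi := -\tfrac{4(n-1)}{n-2}\Delta\varphi + \scal\,\varphi + n(n-1)\,\varphi^{\kappa+1} - |L|_g^2\,\varphi^{-\kappa-3}
\]
and letting $B_i\varphi$ denote the Robin operator on $\sigma_i$ coming from (\ref{EqLichHorAppIntro}), the task reduces to producing $0 < \varphi_- \le \varphi_+$ with $\mathcal{L}\varphi_- \le 0 \le \mathcal{L}\varphi_+$, $B_i\varphi_- \le 0 \le B_i\varphi_+$, and both asymptotic to $1$ at $\binf$. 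A structural feature that makes this approach clean is that $\varphi \mapsto n(n-1)\varphi^{\kappa+1} - |L|_g^2\,\varphi^{-\kappa-3}$ is strictly increasing in $\varphi$, so the monotone iteration works as soon as an ordered pair of barriers is found.

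\textbf{Sub-solution.} I would apply Theorem \ref{PrescScalH} to $(M,g)$ with target $\hscal = -n(n-1)$ and prescribed mean curvature $\widehat H_i \ge 0$ on each $\sigma_i$. The resulting positive $\varphi_- \in \mathcal{C}^{k,\alpha}_0$, tending to $1$ at $\binf$, satisfies $-\tfrac{4(n-1)}{n-2}\Delta\varphi_- + \scal\,\varphi_- = -n(n-1)\,\varphi_-^{\kappa+1}$, hence $\mathcal{L}\varphi_- = -|L|_g^2\,\varphi_-^{-\kappa-3} \le 0$. The conformal transformation law for the mean curvature then reads $\tfrac{2(n-1)}{n-2}\nabla_{\nu_i}\varphi_- - H_i\varphi_- = -\widehat H_i\,\varphi_-^{\kappa/2+1}$, and combining this with the hypothesis $\epsilon_i L_{\nu_i\nu_i} \ge 0$ and a choice of $\widehat H_i$ large enough relative to $(n-1)|\tau|$ and $\sup|L_{\nu_i\nu_i}|$ gives $B_i\varphi_- \le 0$.

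\textbf{Super-solution.} This is the main difficulty. The natural ansatz is $\varphi_+ = 1 + v$ with $v \ge 0$ and $v \to 0$ at $\binf$, which builds in the right asymptotic. Convexity of $t \mapsto t^{\kappa+1}$ and $(1+v)^{-\kappa-3} \le 1$ for $v \ge 0$ give the pointwise lower bound
\[
\mathcal{L}(1+v) \ge -\tfrac{4(n-1)}{n-2}\Delta v + \bigl(\scal + (\kappa+1)n(n-1)\bigr)\,v + \bigl(\scal + n(n-1) - |L|_g^2\bigr),
\]
whose zeroth-order coefficient tends to $\kappa\,n(n-1) > 0$ at $\binf$. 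I would therefore obtain $v$ from an auxiliary linear Robin problem with coefficients positive off a compact set, invoking Lee's isomorphism theorem on the appropriate weighted Hölder spaces, the generalized maximum principle established earlier, and a suitable shift of the zeroth-order term to handle the possible failure of pointwise positivity in the interior; the Robin data are chosen so that $B_i(1+v) \ge 0$. The configuration $\tau = \pm 1$ with $\epsilon_i = -\tau$ is genuinely critical: the boundary exponent $\kappa/2+1 = n/(n-2)$ is Sobolev-critical on $\sigma_i$, and it is precisely here that the Yamabe-positivity hypothesis $\mathcal{Y}(\sigma_i) > 0$ enters to ensure solvability of the Robin problem with boundary data large enough to dominate the critical nonlinearity.

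A comparison argument, using the strict positivity of the linearized zeroth-order coefficient off a compact set together with the generalized maximum principle, yields $\varphi_- \le \varphi_+$, and the distributional monotone method then produces a solution $\varphi \in \mathcal{C}^{k,\alpha}_0$ with $\varphi_- \le \varphi \le \varphi_+$, whence $\varphi \to 1$ at $\binf$ by squeeze. For the decay statement, setting $u := \varphi - 1$ gives
\[
-\tfrac{4(n-1)}{n-2}\Delta u + \kappa\,n(n-1)\,u = \bigl(\scal + n(n-1)\bigr) - |L|_g^2 + \mathcal{N}(u),
\]
with $\mathcal{N}(u) = O(u^2)$; since $\kappa\,n(n-1) > 0$, the linear operator on the left is an isomorphism $\mathcal{C}^{k,\alpha}_\delta \to \mathcal{C}^{k-2,\alpha}_\delta$ for $\delta \in [0,n)$ by \cite{GrahamLee, LeeFredholm}, and a standard bootstrap yields $u \in \mathcal{C}^{k,\alpha}_\delta$. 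The principal obstacle I expect is the super-solution in the critical case $\tau = \pm 1$, $\epsilon_i = -\tau$, where the Yamabe-positivity hypothesis must genuinely be used to handle the Sobolev-critical boundary nonlinearity.
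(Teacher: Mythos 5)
Your sub-solution is valid and is in fact a small variant of the paper's: the paper solves a Dirichlet problem (Lemma \ref{phiEpsilon}) with small boundary datum $\epsilon$ and uses Hopf's lemma to see $\partial_\nu\varphi_\epsilon \le -\eta < 0$, whereas you invoke Theorem \ref{PrescScalH} directly with $\hscal = -n(n-1)$ and a large constant $\widehat H_i$. Both routes produce a Yamabe solution whose boundary normal derivative is appropriately negative, and your computation that $\mathcal{L}\varphi_- = -|L|^2\varphi_-^{-\kappa-3} \le 0$ is correct. This part is fine.

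The super-solution is where your argument breaks down, and it is precisely the part the paper spends most of its effort on. The obstruction is structural: on $\sigma_i$ one needs
\[
\frac{2(n-1)}{n-2}\nabla_{\nu_i}\varphi_+ - H_i\varphi_+ \;\geq\; \epsilon_i L_{\nu_i\nu_i}\varphi_+^{-1-\kappa/2} - \epsilon_i(n-1)\tau\,\varphi_+^{\kappa/2+1},
\]
and whenever $\epsilon_i\tau < 0$ (not only in the strictly critical case $\epsilon_i\tau = -1$) the right-hand side contains a term of order $\varphi_+^{\kappa/2+1}$ with the wrong sign, so the normal derivative of $\varphi_+$ on $\sigma_i$ must grow superlinearly in the boundary value of $\varphi_+$. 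A function $\varphi_+ = 1 + v$ where $v$ solves a \emph{linear} Robin problem simply cannot have this property: the Robin condition ties $\nabla_\nu v$ to $v$ linearly, and pushing the Robin data up raises both $v$ and $\nabla_\nu v$ in lockstep, with no mechanism to beat the $(1+v)^{\kappa/2+1}$ term. The paper circumvents this by an entirely different device: away from $\bint$ take $\varphi_+ = \Lambda$ constant, and in a collar $\{r < \delta\}$ set $\varphi_+ = f(\delta - r)$ where $f$ solves the one-dimensional ODE $f'' = \frac{n(n-2)}{4}f^{\kappa+1}$, $f(0)=\Lambda$, $f'(0)=0$. This $f$ blows up in finite time $\delta_\Lambda$ and, by the energy identity \eqref{consNrj}, satisfies $f' \sim \frac{n-2}{2}f^{1+\kappa/2}$ as $f\to\infty$ — exactly the superlinear scaling needed. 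Letting $\delta \nearrow \delta_\Lambda$ makes $f(\delta)$ and $f'(\delta)$ large simultaneously at the right relative rate, which settles the case $\epsilon_i\tau > -1$. This is the idea your proposal is missing.

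Your characterization of where the Yamabe hypothesis enters is also off the mark. In the paper the role of $\mathcal{Y}(\sigma_i)>0$ is concrete and geometric: it allows a conformal change of $g$ supported near $\sigma_i$ after which $\scal_{\sigma_i}$ is a strictly positive constant and, with a further conformal normalization, $H_i = \partial_r H_i = 0$ at $r=0$. One then replaces the ODE by $h'' = \frac{n(n-2)}{4}h^{\kappa+1} + Ah$ with $0 < A < \frac{n-2}{4(n-1)}\scal_{\sigma_i}$, and the positive constant $A$ is exactly the room afforded by the Yamabe positivity; the bound $h'(r)\le \frac{C}{\mu_\Lambda - r}h(r)$ together with $H(r) = o(r)$ then shows the $H\,\partial_r\varphi$ term is subordinate. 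This is a delicate balance between the ODE barrier, the geometry of $\sigma_i$, and the choice of gauge, none of which is captured by "solvability of a Robin problem with large data." The asymptotic refinement $\varphi - 1 \in \mathcal{C}^{k,\alpha}_\delta$ at the end of your proposal is correct and matches the paper's bootstrap via Theorem \ref{comportementBinf}, but the existence proof as you have sketched it does not go through.
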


Remarquons qu'avec nos conventions le cas $\epsilon_i \tau = -1$ correspond \`a un horizon apparent futur (resp. pass\'e) dans les hypersurfaces asymptotiquement isotropes qui sont des surfaces de Cauchy pour le d\'eveloppement en temps futur (resp. pass\'e). En page \pageref{secCtrEx} nous montrons que si l'invariant de Yamabe du bord n'est pas strictement positif, il peut ne pas exister de solution \`a l'\'equation \eqref{EqLichHorAppIntro}.\\

Notons que le probl\`eme sans bord a d\'ej\`a \'et\'e \'etudi\'e dans \cite{AnderssonChrusciel}. Les horizons apparents sont la version \og donn\'ees initiales\fg~ des horizons des \'ev\`enements (voir par exemple \cite{Wald}). Cette construction est la version asymptotiquement hyperbolique des articles \cite{DainInitialData}, \cite{DainTrapped1,DainTrapped2}, \cite{Maxwell}. Le cas des vari\'et\'es asymptotiquement hyperboliques permet en particulier de construire des surfaces de Cauchy asymptotiquement isotropes dans une vari\'et\'e lorentzienne asymptotiquement plate et des surfaces de Cauchy correspondant aux sections \`a courbure moyenne nulle des espaces asymptotiquement anti-de Sitter. Signalons \'egalement que d'autres auteurs ont construit des m\'etriques contenant des horizons apparents par des techniques de recollement \cite{ShiTam}, \cite{LiShiWu}.\\

Li\'e \`a l'\'equation de Lichnerowicz, nous \'etudions finalement la construction de TT-tenseurs. Notre construction, bas\'ee sur \cite{LeeFredholm},  permet d'obtenir l'intervalle des poids optimal pour l'existence de TT-tenseurs. Ceci nous permet de construire des solutions des \'equations de contrainte en relativit\'e g\'en\'erale :

\begin{theorem} Soit $(M, g)$ une vari\'et\'e asymptotiquement hyperbolique de classe $\mathcal{C}^{l, \beta}$ avec $l+\beta \geq 2$.

\begin{enumerate}
\item Soient $L_0$ un 2-tenseur sym\'etrique et sans trace avec $L_0 \in W^{k-1, p}_\delta$ avec $2 \leq k \leq l$, $1 < p < \infty$ et $\left|\delta + \frac{n-1}{p} - \frac{n-1}{2} \right| < \frac{n+1}{2}$, et $b \in W^{k-1-\frac{1}{p}, p} (\bint, T^*M)$. Il existe une unique 1-forme $\psi \in W^{k, p}_\delta(M, T^*M)$ telle que, si on d\'efinit
$L = L_0 + \mathcal{L}_{\psi^\sharp} g - \frac{2}{n} \mathrm{div}\left(\psi^\sharp\right) g$, alors :

\begin{itemize}
\item $L$ est un tenseur sym\'etrique transverse et sans trace
\item $L(\nu, .) = b$ sur $\bint$.
\end{itemize}

\item Soient $L_0$ un 2-tenseur sym\'etrique et sans trace avec $L_0 \in \mathcal{C}^{k-1, \alpha}_\delta$ avec $2 \leq k + \alpha \leq l$,
$0 < \alpha < 1$ et $\left|\delta - \frac{n-1}{2} \right| < \frac{n+1}{2}$, et $b \in \mathcal{C}^{k-1, \alpha} (\bint, T^*M)$. Il existe une unique 1-forme $\psi \in \mathcal{C}^{k, \alpha}_\delta(M, T^*M)$ telle que, si on d\'efinit
$L = L_0 + \mathcal{L}_{\psi^\sharp} g - \frac{2}{n} \mathrm{div}\left(\psi^\sharp\right) g$, alors :

\begin{itemize}
\item $L$ est un tenseur sym\'etrique transverse et sans trace
\item $L(\nu, .) = b$ sur $\bint$.
\end{itemize}
\end{enumerate}
\end{theorem}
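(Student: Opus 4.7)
I write $\mathcal{L}_0 X := \mathcal{L}_X g - \frac{2}{n}\divg(X)\,g$ for the conformal Killing operator. The candidate tensor $L = L_0 + \mathcal{L}_0 \psi^\sharp$ is automatically trace-free, so both conclusions (transversality and boundary value) translate into the elliptic boundary value problem
\begin{equation*}
P\psi := \divg\!\left(\mathcal{L}_0 \psi^\sharp\right) = -\divg L_0 \text{ on } \mathring{M}, \qquad \left(\mathcal{L}_0 \psi^\sharp\right)(\nu,\cdot) = b - L_0(\nu,\cdot) \text{ on } \bint.
\end{equation*}
The operator $P$ is elliptic on 1-forms (its principal symbol at $\xi$ is $|\xi|^2 I + (1-\tfrac{2}{n})\,\xi\otimes\xi$), and the boundary operator is the natural Neumann-type operator associated to the quadratic form $\psi\mapsto \int_M |\mathcal{L}_0 \psi^\sharp|^2$; the Lopatinsky--Shapiro condition is then classical.

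\textbf{Fredholm property.} Computed on the hyperbolic model, the indicial roots of $P$ are symmetric about $(n-1)/2$ and yield an indicial radius equal to $(n+1)/2$. The weight hypotheses $|\delta + (n-1)/p - (n-1)/2| < (n+1)/2$ and $|\delta - (n-1)/2| < (n+1)/2$ place $\delta$ strictly inside this critical gap, so the weighted Fredholm theory of \cite{LeeFredholm}, extended to accomodate the interior boundary $\bint$ in the standard way, produces a Fredholm operator of index zero between the appropriate weighted spaces (Sobolev or Hölder).

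\textbf{Injectivity and surjectivity.} Suppose $\psi$ is in the kernel. An indicial decay improvement (available precisely because $\delta$ lies in the admissible gap) upgrades $\psi$ to the best asymptotic rate, which is integrable enough to legitimize integration by parts
\begin{equation*}
0 = \int_M \langle P\psi, \psi\rangle\,dv = \tfrac{1}{2}\int_M |\mathcal{L}_0 \psi^\sharp|^2\,dv - \int_{\bint} \langle (\mathcal{L}_0 \psi^\sharp)(\nu,\cdot), \psi\rangle\,d\sigma,
\end{equation*}
with vanishing boundary-at-infinity term. The boundary integral over $\bint$ also vanishes by homogeneity of the Neumann data, so $\mathcal{L}_0 \psi^\sharp = 0$, i.e.\ $\psi^\sharp$ is a conformal Killing field decaying at conformal infinity. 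Classical rigidity (the field extends by zero to the conformal compactification, vanishes to infinite order at $\binf$, and hence vanishes identically by unique continuation) forces $\psi = 0$. Since $P$ is formally self-adjoint and the admissible weight interval is symmetric under the duality pairing, the cokernel vanishes by the same argument, giving surjectivity. Uniqueness and regularity then follow from the isomorphism property; the Hölder statement is proved identically with Lee's Hölder Fredholm theorem.

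\textbf{Main obstacle.} The delicate point is the injectivity step for weights $\delta$ near the ends of the admissible interval: the integration by parts above is not a priori legitimate for such weights, and one must first run the indicial bootstrap to improve the asymptotics of any putative kernel element to the optimal decay. Once this bootstrap is in place, the remaining ingredients (rigidity of decaying conformal Killing fields on asymptotically hyperbolic manifolds, compatibility with the Neumann condition on $\bint$, and the duality argument for the cokernel) assemble into the stated isomorphism.
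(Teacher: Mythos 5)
Your strategy is the one the paper follows: translate the problem into the boundary value problem for the conformal vector Laplacian $\Delta_{TT}\psi = \mathrm{div}(\mathcal{L}_0\psi^\sharp)$ with Neumann-type boundary operator $\mathcal{B}\psi = (\mathcal{L}_0\psi^\sharp)(\nu,\cdot)$, compute the indicial radius $R=\frac{n+1}{2}$, invoke Lee's weighted Fredholm theory at $\binf$ together with ADN near $\bint$ to get a Fredholm-type estimate, then kill kernel and cokernel via integration by parts and rigidity of $L^2$ conformal Killing fields. The overall architecture is the same.

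Two points in your argument are glossed over in ways that matter. First, the assertion ``produces a Fredholm operator of index zero'' is not justified: Lee's Fredholm theory plus the ADN estimate near $\bint$ gives, in the first instance, only a semi-Fredholm estimate (finite-dimensional kernel and closed range); with a nonempty interior boundary and a Neumann-type boundary operator the index is not automatic. The paper does not use index zero at all -- it proves injectivity and surjectivity separately. Since you also argue surjectivity by hand, the index-zero remark is an unproved and unnecessary detour, but as stated it is a gap. Second, ``the cokernel vanishes by the same argument'' compresses the real content of the surjectivity step. The adjoint of $\mathcal{P}^{2,2}_0$ acts on $L^2(M,T^*M)\times H^{-1/2}(\bint,T^*M)$, so a cokernel element is a pair $(\widetilde\psi,b)$, and one has to extract from the defining relations that $\Delta_{TT}\widetilde\psi=0$, $\mathcal{B}\widetilde\psi=0$, and $b=-\widetilde\psi|_{\bint}$ before the integration by parts can be run; it is not literally the same computation as the kernel case. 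Your ``indicial decay improvement'' is exactly the paper's elliptic regularity lemma (kernel elements of $\mathcal{P}$ on any admissible weighted space in fact lie in $W^{2,2}_0$), and the rigidity step (a conformal Killing field in $L^2$ extends by zero across $\binf$ with vanishing first jet and hence vanishes by unique continuation) is a genuine lemma whose proof takes some work -- you have identified the right statement, but it should be flagged as a nontrivial ingredient rather than ``classical rigidity.'' Modulo these two points of precision, your proposal is the paper's proof.
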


Je remercie Erwann Delay, mon directeur de th\`ese, pour ses nombreux conseils et sa relecture attentive des versions pr\'eliminaires de cet article. Je remercie \'egalement Piotr Chru\'sciel et Marc Herzlich pour leurs commentaires qui m'ont guid\'es tout au long de ce travail. Enfin, je suis reconnaissant envers Beno\^it Michel pour son aide sur le contre-exemple de la section \ref{secCMC}.

\section{Pr\'eliminaires}

L'objectif de cette section est de rappeler les r\'esultats principaux de \cite{GrahamLee} et \cite{LeeFredholm} ainsi que d'\'etendre la m\'ethode de monotonie au cas de sur et sous-solutions au sens des distributions pour des conditions au bord non lin\'eaires.

\subsection{Vari\'et\'es asymptotiquement hyperboliques}

Soit $\overline{M}^n$ une vari\'et\'e compacte \`a bord. On suppose que $\partial M$ est s\'epar\'e en deux parties ouvertes $\binf$ et $\bint$ (r\'eunion de composantes connexes). On notera $M = \mathring{M} \bigcup \bint$. On appelle \textbf{fonction d\'efinissante} pour $\binf$ une fonction lisse $\rho : \overline{M} \to [0;~\infty[$ telle que $\rho^{-1}(0) = \binf$ et $d\rho \neq 0$ le long de $\binf$. Une m\'etrique $g$ sur $M$ sera dite \textbf{conform\'ement compacte} de classe $\mathcal{C}^{l, \beta}$  si $\rho^2 g$ s'\'etend en une m\'etrique $\overline{g} \in \mathcal{C}^{l, \beta}$ sur $\overline{M}$. Un calcul simple montre alors que, si $g$ est conform\'ement compacte de classe $\mathcal{C}^{l, \beta}$ avec $l + \beta \geq 2$, la courbure sectionnelle de $g$ tend vers $-|d\rho|^2_{\overline{g}}$ au voisinage de $\binf$. On dira donc que $g$ est \textbf{asymptotiquement hyperbolique} si $g$ est conform\'ement compacte avec $|d\rho|^2_{\overline{g}} = 1$ le long de $\binf$. On notera par la suite $M_\sigma = \rho^{-1}(]0; \sigma[)$ et $S_\sigma = \rho^{-1}(\sigma)$. $S_\sigma$ est une sous-vari\'et\'e si $\sigma$ est assez petit.

\subsection{Espaces de fonctions}

On fixe un fibr\'e vectoriel g\'eom\'etrique (i.e. associ\'e au fibr\'e principal $SO(M)$) E sur $M$. Par la suite, on se fixe $k \geq 0$ un entier et $0 < \alpha < 1$. On d\'efinit tout d'abord l'espace $\mathcal{C}^{k, \alpha}_{(0)}$. C'est l'ensemble des sections $f : \overline{M} \to \overline{E}$ de classe $\mathcal{C}^{k, \alpha}$ qu'on munit de la norme standard.\\

On d\'efinit ensuite l'espace de Sobolev $W^{k, p}_0(M, E)$. C'est l'espace de Sobolev des sections $u \in L^p$ telles qu'au sens des distributions, $\forall~j\in \{0, \cdots, k\},~\nabla^{(j)} u \in L^p$. On le munit de la norme :

$$\|u\|_{W^{k,p}_0(M, E)} = \left( \sum_{j=0}^k \int_M \left|\nabla^{(j)} u\right|^p d\mu_g \right)^{\frac{1}{p}}.$$

Puis l'espace de Sobolev \`a poids $W^{k, p}_\delta (M, E) = \rho^\delta W^{k, p}_0 (M, E)$ qu'on munit de la norme
$\|u\|_{W^{k,p}_\delta(M, E)} = \|\rho^{-\delta} u\|_{W^{k,p}_0(M, E)}$. On notera $L^p_\delta = W^{0, p}_\delta$.\\

Finalement nous d\'efinissons les espaces de H\"older \`a poids. On choisit tout d'abord un nombre fini de cartes $\phi = (\rho, \theta^1, \cdots, \theta^{n-1})$ au voisinage de $\binf$. telles que le domaine de d\'efinition des $\phi$ recouvre $\binf$ qu'on compl\`ete avec un nombre fini de cartes dont le domaine de d\'efinition est pr\'ecompact dans $M$. On note $\mathbb{H}$ l'espace hyperbolique vu comme le demi-espace de $\bR^n$ $\{ x_1 > 0 \}$ muni de la m\'etrique $g_{hyp} = \frac{1}{x_1^2} g_{eucl}$. et on d\'efinit $B_r$ la boule centr\'ee en $(1,0, \cdots, 0)$ de rayon $r$ dans $\mathbb{H}$ pour la m\'etrique hyperbolique. Si $M \ni p_0 = \phi^{-1}(\rho_0, \theta^1_0\cdots, \theta^{n-1}_0)$ est l'image r\'eciproque par une des cartes fix\'ees, on d\'efinit les coordonn\'ees de M\"obius au voisinage en $p_0$ par :

$$\phi^r_{p_0}\left(p\right) = \left(\frac{\rho(p)}{\rho_0}, \frac{\theta^1(p)-\theta^1_0}{\rho_0}, \cdots, \frac{\theta^{n-1}(p)-\theta^{n-1}_0}{\rho_0} \right)$$

$\phi^r_{p_0} : \left(\phi^r_{p_0}\right)^{-1}\left(B_r\right) \to B_r$. On introduit ensuite la norme de H\"older :

$$\|u\|_{\mathcal{C}^{k, \alpha}_\delta(M, E)} = \sup_{p_0 \in M} \rho^{-\delta}(p_0) \left\| \left(\left(\phi^1_{p_0}\right)^{-1}\right)^* u\right\|_{\mathcal{C}^{k, \alpha}(B_1)}.$$

L'espace $\mathcal{C}^{k, \alpha}_\delta(M, E)$ est alors l'espace des sections $u \in \mathcal{C}^{k, \alpha}_{\mathrm{loc}}$ telles que :

$$\|u\|_{\mathcal{C}^{k, \alpha}_\delta(M, E)} < \infty.$$

\subsection{La m\'ethode de monotonie}\label{secMonotonie}

Pour r\'esoudre l'\'equation de prescription de la courbure scalaire, nous utilisons la m\'ethode de monotonie adapt\'ee pour prendre en compte des conditions au bord non lin\'eaires telle qu'elle est d\'ecrite dans \cite{Maxwell}. Cette m\'ethode nous sera \'egalement utile par la suite pour la r\'esolution de l'\'equation de Lichnerowicz (voir section \ref{secCMC}). Cependant, contrairement \`a \cite{Maxwell}, nous regardons l'existence de solutions dans les espaces de H\"older \`a poids. Nos sur et sous-solutions seront des fonctions lipschitziennes, d\'erivables dans un voisinage du bord interne $\bint$, sur et sous-solutions au sens des distributions. Dans tout ce paragraphe, nous fixons une vari\'et\'e $(M^n, g)$ asymptotiquement hyperbolique de classe $\mathcal{C}^{l, \beta}$ ($l+\beta \geq 2$), un entier $k \geq 2$ et $\alpha$ un r\'eel, $0 < \alpha < 1$ tels que
$2 < k+\alpha \leq l + \beta$. Supposons qu'on souhaite r\'esoudre le probl\`eme suivant :

\begin{equation}
\label{edpgen}
- \Delta \varphi = F(p, \varphi)
\end{equation}

\noindent o\`u $F : M \times \bR_+^* \to \bR$ est une fonction de la forme $F(p, \varphi) = \sum_{i \in I} a_i(p) \varphi^{\beta_i}$ avec $I$ est un ensemble fini, $\beta_i \in \bR$ et $a_i \in \mathcal{C}^{k, \alpha}_0(M)$. Notons que si $\forall~i\in I, \beta_i = 0 \textrm{ ou } \beta_i \geq 1$, $F$ se prolonge en une fonction d\'erivable \`a $M \times \bR_+$, ce qui sera sous-entendu par la suite.

\begin{prop}\label{propMonotonie}
On suppose qu'il existe deux fonctions $\varphi_+$ et $\varphi_-$ appel\'ees respectivement \textbf{sur-solution} et \textbf{sous-solution} \`a valeurs dans $(\epsilon; \infty)$ ~pour un certain $\epsilon > 0$ petit (ou dans $\bR_+$ dans le cas o\`u les $\beta_i$ sont nuls ou sup\'erieurs ou \'egaux \`a $1$), continues sur $\Mbar$, $\varphi_+ \geq \varphi_-$, telles que :

\begin{eqnarray*}
\forall~\psi \in \mathcal{C}^{2}_c\left( \mathring{M} \right) & -\int_M \varphi_+ \Delta \psi \geq \int_M \psi F\left(p, \varphi_+\right) \\
 & \left(\text{resp.~}-\int_M \varphi_- \Delta \psi \leq \int_M \psi F\left(p, \varphi_-\right) \right)
\end{eqnarray*}
alors, si on sait que (estimation a priori) si $\varphi$ est solution avec $\varphi_- \leq \varphi \leq \varphi_+$, alors
$\varphi \geq \epsilon > 0$ pour un certain $\epsilon$, on a :

\begin{enumerate}
\item \textbf{Probl\`eme de Dirichlet le long de $\bint$}

Soit $h \in \mathcal{C}^{k, \alpha}\left(\bint\right)$. Si $\varphi_- \leq h \leq \varphi_+$ sur $\bint$, il existe une fonction
$\varphi \in \mathcal{C}^{k, \alpha}_0$, $\varphi_- \leq \varphi \leq \varphi_+$, solution du probl\`eme de Dirichlet :

\begin{equation}
\label{dirichlet}
\left\lbrace
\begin{array}{rcl}
- \Delta \varphi & = & F(p, \varphi)\\
\varphi & = & h \quad\mathrm{sur~}\bint.
\end{array}
\right.
\end{equation}

\item \textbf{Condition au bord $\bint$ non lin\'eaire}

On se donne ici une fonction $f : \bint \times \bR_+^* \to \bR$ de classe $\mathcal{C}^{k-1, \alpha}$. On suppose de plus que $\partial_\nu \varphi_+ \geq f\left(p, \varphi_+\right)$ (resp. $\partial_\nu \varphi_- \leq f\left(p, \varphi_-\right)$) $\forall p \in \bint$. Alors il existe une fonction $\varphi \in \mathcal{C}^{k, \alpha}_0$ telle que
$\varphi_- \leq \varphi \leq \varphi_+$ solution de :

\begin{equation}
\left\lbrace
\begin{array}{rcl}
- \Delta \varphi & = & F(p, \varphi)\\
\partial_\nu \varphi & = & f(p, \varphi) \quad\mathrm{sur~}\bint.
\end{array}
\right.
\end{equation}
\end{enumerate}
\end{prop}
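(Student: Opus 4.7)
The overall plan is a monotone iteration in the spirit of \cite{Maxwell}: I linearize the PDE by a large shift, apply Lee's linear Fredholm theory on AH manifolds at each step, and use a generalized distributional maximum principle to control the ordering of the iterates.

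\textbf{Linearization and iteration.} Since $\varphi_\pm$ take values in a bounded interval $[m, M] \subset (\epsilon, \infty)$ and each summand $a_i(p) u^{\beta_i}$ of $F(p, u)$ has bounded derivative in $u$ on $[m, M]$ uniformly in $p$, I choose $\Lambda > 0$ large enough that $u \mapsto F(p, u) + \Lambda u$ is nondecreasing on $[m, M]$ for every $p \in M$. In case (2), I similarly pick $\Lambda' > 0$ with $u \mapsto f(p, u) - \Lambda' u$ nonincreasing. I set $\psi_0 = \varphi_+$ and, given $\psi_n \in \mathcal{C}^{k, \alpha}_0(M)$ with $\varphi_- \leq \psi_n \leq \varphi_+$, define $\psi_{n+1} \in \mathcal{C}^{k, \alpha}_0(M)$ as the unique solution of $(-\Delta + \Lambda)\psi_{n+1} = F(p, \psi_n) + \Lambda \psi_n$ on $\mathring{M}$, with boundary condition $\psi_{n+1} = h$ on $\bint$ in case (1), or $\partial_\nu \psi_{n+1} - \Lambda' \psi_{n+1} = f(p, \psi_n) - \Lambda' \psi_n$ on $\bint$ in case (2). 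Solvability in $\mathcal{C}^{k, \alpha}_0$ comes from the Fredholm theory of \cite{GrahamLee, LeeFredholm}, with $\Lambda > 0$ ensuring that the constant kernel at $\binf$ is killed.

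\textbf{Monotonicity.} I prove inductively that $\varphi_- \leq \psi_{n+1} \leq \psi_n \leq \varphi_+$. For the base step $\psi_1 \leq \varphi_+$, set $w = \varphi_+ - \psi_1$. Then $(-\Delta + \Lambda) w \geq 0$ distributionally, since $\varphi_+$ is a distributional supersolution and $F + \Lambda I$ is nondecreasing on $[m, M]$; moreover $w \geq 0$ on $\bint$ in case (1), and $\partial_\nu w - \Lambda' w \geq 0$ pointwise on $\bint$ in case (2) (using differentiability of $\varphi_+$ near $\bint$ and monotonicity of $-f + \Lambda' I$), while $w$ remains bounded at $\binf$. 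Testing the distributional inequality against $w_- = \min(w, 0)$, suitably truncated near $\binf$, yields $\int |\nabla w_-|^2 + \Lambda \int w_-^2 \leq 0$, hence $w \geq 0$. The step $\psi_{n+1} \leq \psi_n$ follows from the same argument applied to $w = \psi_n - \psi_{n+1}$, with all functions now smooth and the variants of the generalized maximum principle recalled in the preliminaries applying directly.

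\textbf{Passage to the limit; main obstacle.} The sequence $\psi_n$ is pointwise decreasing and bounded below by $\varphi_-$, so it converges pointwise to some $\varphi$ with $\varphi_- \leq \varphi \leq \varphi_+$. Uniform a priori bounds in $\mathcal{C}^{k, \alpha}_0$ provided by \cite{LeeFredholm}, combined with the compactness of the embedding $\mathcal{C}^{k, \alpha}_0 \hookrightarrow \mathcal{C}^{k, \alpha'}_{0, \mathrm{loc}}$ for $\alpha' < \alpha$, allow passage to the limit in both the interior equation and the boundary condition, yielding $\varphi \in \mathcal{C}^{k, \alpha}_0$ solving the problem. The main difficulty is the distributional form of the generalized maximum principle: $\varphi_\pm$ satisfy the equation only in the sense of distributions in $\mathring{M}$, so the comparison must be obtained by the variational truncation argument above rather than by a pointwise maximum principle, and in case (2) the pointwise Robin-type boundary inequality for $\varphi_\pm$ must be reconciled with the interior distributional inequality via integration by parts in a neighbourhood of $\bint$, which is precisely why differentiability of $\varphi_\pm$ near $\bint$ is imposed.
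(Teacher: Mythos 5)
Your overall strategy matches the paper's: a monotone iteration started at the supersolution, linearization by a large shift, linear solvability at each step, and a comparison argument to order the iterates. There is, however, a sign error in the Robin boundary operator that makes the monotonicity argument fail. You iterate with $\partial_\nu\psi_{n+1} - \Lambda'\psi_{n+1} = f(p,\psi_n) - \Lambda'\psi_n$ and impose that $u \mapsto f(p,u) - \Lambda'u$ is nonincreasing, with $\Lambda' > 0$. The operator $\partial_\nu - \Lambda'$ ($\nu$ outward, $\Lambda' > 0$) does \emph{not} support a comparison principle: at a negative minimum of $w$ on $\bint$, Hopf's lemma gives $\partial_\nu w < 0$, which is perfectly compatible with $\partial_\nu w - \Lambda'w \geq 0$ because $-\Lambda'w > 0$; correspondingly, the boundary term in your truncation computation is $\int_{\bint} w_- \partial_\nu w \leq \Lambda'\int_{\bint} w_-^2$, with the wrong sign to be absorbed. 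Worse, for the inductive step the inequality you need actually flips: if $\psi_n \leq \psi_{n-1}$, your monotonicity hypothesis on $f - \Lambda' I$ yields $\partial_\nu(\psi_n - \psi_{n+1}) - \Lambda'(\psi_n - \psi_{n+1}) \leq 0$, not $\geq 0$. The correct setup is the one in the paper's Lemme~\ref{pbLin}: the linearized boundary operator must be $\partial_\nu + A$ with $A > 0$ large enough that $u \mapsto f(p,u) + Au$ is nondecreasing; then $w = \psi_n - \psi_{n+1}$ satisfies $\partial_\nu w + Aw \geq 0$ on $\bint$, which together with $-\Delta w + Aw \geq 0$ does yield $w \geq 0$.

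Two further steps are glossed over and are exactly where the paper does real work. First, testing the distributional inequality against $w_-$ ``suitably truncated near $\binf$'' does not give $\int|\nabla w_-|^2 + \Lambda\int w_-^2 \leq 0$ for free: the AH volume is infinite, $w_- = \min(\varphi_+ - \psi_1,0)$ need not tend to $0$ at $\binf$ (the iterates are only known to lie in $\mathcal{C}^{2,\alpha}_0$, i.e.\ bounded), and the cutoff error $\int w_-\,\nabla w\cdot\nabla\chi_R$ does not decay with the AH volume growth. The paper treats the case of a supremum attained at $\binf$ by a separate, delicate argument: localize around a sequence $p_i \to \binf$ at which the defect is nearly maximal, build quadratic bumps $g_i$ in Möbius coordinates with $|dg_i|_g$ and $|\Delta g_i|_g$ uniformly bounded, and test against $h_i = \max\{0,\epsilon_i - F/g_i\}$ to contradict $\epsilon_i \to 0$. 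Second, the uniform $\mathcal{C}^{2,\alpha}_0$ bound on the iterates is not automatic: the boundary data satisfy only $\|f(p,\psi_n)\|_{\mathcal{C}^{1,\alpha}(\bint)} \leq C\big(1 + \|\psi_n\|_{\mathcal{C}^{1,\alpha}(\bint)}\big)^2$, and the quadratic growth blocks a direct interpolation bootstrap; the paper first bounds $\|\psi_n\|_{\mathcal{C}^{1,\alpha}(\bint)}$ via a local $W^{2,p}$ estimate and Sobolev embedding, and only then runs the interpolation argument.
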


D\'emontrons le second point, le premier \'etant plus simple. Notre preuve est bas\'ee sur des lemmes issus de \cite{GrahamLee}. Commen\c cons par le lemme suivant :

\begin{lemma}[Principe du maximum g\'en\'eralis\'e]\label{MaxYau} Soient $M$ une vari\'et\'e asymptotiquement hyperbolique et $f \in \mathcal{C}^2(M)$ une fonction major\'ee. On suppose que $f$ n'atteint pas son maximum en un point de $\bint$. Alors il existe une suite $p_i \in M$ telle que :

\begin{enumerate}
\item $\lim_{i \to \infty} f\left(p_i\right) = \sup_M f$
\item $\lim_{i \to \infty} \left|\nabla f\left(p_i\right)\right|_g = 0$
\item $\limsup_{i \to \infty} \Delta f \left(p_i\right) \leq 0$.
\end{enumerate}
\end{lemma}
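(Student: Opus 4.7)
\medskip
\noindent\textbf{Plan de la preuve.} L'idée est une variante classique du principe du maximum d'Omori--Yau. Puisque $\Mbar$ est compacte et que $f$ est majorée, le défaut d'atteinte du sup provient soit du fait que la suite maximisante s'échappe vers $\binf$, soit qu'elle tend vers le bord interne : on va pénaliser pour contrôler ces deux phénomènes simultanément. On se donne une fonction auxiliaire $\phi \in \mathcal{C}^2(M)$ vérifiant $\phi \to +\infty$ le long de $\binf$, $|\nabla \phi|_g \leq C$ et $\Delta \phi \leq C$ uniformément sur $M$. Un choix naturel est $\phi = -\log \rho$ où $\rho$ est la fonction définissante de $\binf$ : par conformalité, $|\nabla \phi|_g = |d\rho|_{\gbar} = 1 + O(\rho)$, et le calcul habituel du Laplacien conforme en variables asymptotiquement hyperboliques donne $\Delta \phi = -(n-1)|d\rho|^2_{\gbar} + O(\rho) = -(n-1) + O(\rho)$, qui est en particulier borné.

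Pour $\epsilon > 0$ fixé, on considère $f_\epsilon = f - \epsilon \phi$. Comme $f$ est majorée et $\phi \to +\infty$ au voisinage de $\binf$, on a $f_\epsilon \to -\infty$ le long de $\binf$, de sorte que $f_\epsilon$ atteint son supremum en un point $p_\epsilon \in M$ (i.e.\ hors de $\binf$). Le point clef est que, pour $\epsilon$ assez petit, $p_\epsilon$ ne peut pas se trouver sur $\bint$ : en effet $\bint$ étant compact et $f$ n'y atteignant pas son sup, on a $\max_{\bint} f = \sup_M f - c$ pour un certain $c > 0$. On choisit alors un point test $q \in \mathring{M}$ avec $f(q) > \sup_M f - c/2$, et on remarque que pour tout $\epsilon$ assez petit on a à la fois $\epsilon\,\phi(q) < c/4$ et $\epsilon (\max_{\bint} \phi) < c/4$, d'où $f_\epsilon(q) > \max_{\bint} f_\epsilon$, ce qui exclut $p_\epsilon \in \bint$.

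Donc $p_\epsilon \in \mathring{M}$, et $f_\epsilon$ y atteint un maximum intérieur classique. On en déduit $\nabla f(p_\epsilon) = \epsilon\,\nabla \phi(p_\epsilon)$ et $\Delta f(p_\epsilon) \leq \epsilon\,\Delta \phi(p_\epsilon)$, donc $|\nabla f(p_\epsilon)|_g \leq C\epsilon$ et $\Delta f(p_\epsilon) \leq C\epsilon$. Par ailleurs, en comparant avec le point test $q$, $f(p_\epsilon) \geq f_\epsilon(p_\epsilon) \geq f_\epsilon(q) = f(q) - \epsilon \phi(q)$, et en faisant tendre $\epsilon \to 0$ puis $q$ vers un point avec $f(q) \to \sup_M f$, on obtient $\lim f(p_\epsilon) = \sup_M f$. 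En prenant une suite $\epsilon_i \to 0$, la suite $p_i = p_{\epsilon_i}$ vérifie les trois propriétés voulues.

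La seule difficulté notable est la construction de $\phi$ et le contrôle de $\Delta\phi$ uniformément jusqu'au voisinage de $\binf$, qui repose sur la condition $|d\rho|_{\gbar} = 1$ le long de $\binf$ rappelée dans la section sur les variétés asymptotiquement hyperboliques ; le reste (exclure $p_\epsilon \in \bint$, passer à la limite $\epsilon \to 0$) n'utilise que la compacité de $\bint$ et l'hypothèse que le sup n'y est pas atteint.
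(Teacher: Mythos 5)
Votre preuve est correcte, mais elle suit une route différente de celle que le papier indique. Le papier renvoie au Théorème 3.5 de Graham--Lee et signale que la démonstration est voisine de celle du lemme \ref{limsup} : cette dernière repose sur une suite maximisante $q_i$ que l'on peut supposer s'échapper vers $\binf$, des fonctions de troncature $g_i$ (paraboloïdes en coordonnées de carte) centrées aux $q_i$ avec dérivées contrôlées en norme $g$ indépendamment de $i$, puis la minimisation du quotient $F_i/g_i$ — la condition d'ordre un au point de minimum donne le contrôle de $\nabla f$ et celle d'ordre deux celui de $\Delta f$. Votre argument utilise au contraire la pénalisation de type Omori--Yau : une fonction barrière explicite $\phi = -\log\rho$, globalement $\mathcal{C}^2$ sur $M$, tendant vers $+\infty$ le long de $\binf$, de gradient et de laplacien bornés, et l'on maximise $f - \epsilon\phi$. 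Les deux approches sont standard et équivalentes en puissance ici ; la vôtre a l'avantage d'être autonome et de s'appuyer sur une seule barrière globale au lieu d'une famille de troncatures locales, au prix de devoir vérifier une fois pour toutes que $|\nabla\phi|_g$ et $\Delta\phi$ sont bornés sur tout $M$ (ce que garantit la compacité de $\Mbar$ et le fait que $|d\rho|_{\gbar}=1$ sur $\binf$).

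Deux remarques mineures. D'une part, il y a une erreur de signe : avec la convention du papier $\Delta_g = \rho^2\bigl(\overline\Delta - (n-2)\langle\overline\nabla\rho/\rho,\overline\nabla\cdot\rangle\bigr)$, on obtient $\Delta_g(-\log\rho) = (n-1)|\overline\nabla\rho|_{\gbar}^2 - \rho\,\overline\Delta\rho \to n-1 > 0$, et non $-(n-1)$ ; la conclusion $\Delta\phi \leq C$ reste vraie et c'est la seule chose utilisée, donc l'argument n'est pas affecté. D'autre part, l'inégalité $f(p_\epsilon) \geq f_\epsilon(p_\epsilon)$ suppose implicitement $\phi \geq 0$, ce qui demande de normaliser $\rho \leq 1$ sur $\Mbar$ (ou d'ajouter une constante à $\phi$) ; c'est une normalisation anodine qu'il vaut mieux expliciter.
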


Pour la preuve, nous renvoyons \`a \cite[Th\'eor\`eme 3.5]{GrahamLee}. Celle-ci reste n\'eanmoins proche de celle du lemme \ref{limsup} qui s'en inspire.

\begin{lemma}\label{pbLin} Soit $\delta \in \bR$. Si $A$ est assez grand, pour tous $g \in \mathcal{C}^{k-2, \alpha}_\delta(M)$ et $h \in \mathcal{C}^{k-1, \alpha}\left(\bint\right)$, il existe une unique solution $u \in \mathcal{C}^{k, \alpha}_\delta$ \`a :

$$
\label{nonlin}
\left\lbrace
\begin{array}{rcl}
- \Delta u + A u & = & g\\
\partial_\nu u + A u & = & h \quad\mathrm{sur~}\bint.
\end{array}
\right.
$$

\end{lemma}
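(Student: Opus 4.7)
L'unicit\'e r\'esulte du principe du maximum g\'en\'eralis\'e. Supposons que $u$ v\'erifie le probl\`eme homog\`ene ($g = 0$, $h = 0$). Si $u$ atteint un supremum strictement positif en un point $p_0 \in \bint$, alors $u(p_0) > 0$ et $\partial_\nu u(p_0) \geq 0$ ($\nu$ \'etant sortante), de sorte que $\partial_\nu u(p_0) + A u(p_0) > 0$, ce qui contredit la condition au bord homog\`ene. Sinon, le Lemme \ref{MaxYau} fournit une suite $(p_i) \subset M$ avec $u(p_i) \to \sup u$ et $\limsup \Delta u(p_i) \leq 0$ ; alors $\liminf_{i \to \infty}\bigl(-\Delta u(p_i) + A u(p_i)\bigr) \geq A \sup u > 0$, contredisant $-\Delta u + A u = 0$. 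Le m\^eme raisonnement appliqu\'e \`a $-u$ donne $\inf u \geq 0$, d'o\`u $u \equiv 0$.

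Pour l'existence, j'obtiendrais d'abord une solution faible via Lax-Milgram. Apr\`es une int\'egration par parties formelle (le terme de bord \`a $\binf$ s'annulant pour des fonctions de $W^{1,2}$), la formulation variationnelle consiste \`a chercher $u \in W^{1,2}(M)$ tel que
\[
B(u, v) := \int_M \bigl(\langle \nabla u, \nabla v\rangle + A u v\bigr)\, d\mu_g + A \int_{\bint} u v\, d\sigma = \int_M g v\, d\mu_g + \int_{\bint} h v\, d\sigma
\]
pour toute $v \in W^{1,2}(M)$. La forme $B$ est continue (via le th\'eor\`eme de trace sur $\bint$, qui est compact) et, pour $A > 0$, coercive de constante au moins $\min(1, A)$ sur $W^{1,2}$. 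Lax-Milgram fournit donc une unique solution faible $u \in W^{1,2}$ pour toute donn\'ee convenable.

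La \emph{mont\'ee en r\'egularit\'e} est alors la phase la plus d\'elicate. Les estim\'ees int\'erieures de Schauder et celles pour le probl\`eme de Robin (dont la condition au bord satisfait la condition de compl\'ementation de Lopatinski-Shapiro) donnent ais\'ement $u \in \mathcal{C}^{k,\alpha}_{\mathrm{loc}}(M)$ jusqu'\`a $\bint$. \emph{L'obstacle principal est le contr\^ole du comportement asymptotique pr\`es de $\binf$}, c'est-\`a-dire la norme \`a poids $\mathcal{C}^{k,\alpha}_\delta$. J'utiliserais les coordonn\'ees de M\"obius introduites plus haut pour transformer ce contr\^ole en estim\'ees de Schauder classiques de $-\Delta + A$ sur des boules hyperboliques de taille fixe, puis j'invoquerais le cadre Fredholm de \cite{LeeFredholm}. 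Les racines indicielles de $-\Delta + A$ \'etant $s_\pm = \tfrac{n-1}{2} \pm \sqrt{\bigl(\tfrac{n-1}{2}\bigr)^2 + A}$, l'intervalle $(s_-, s_+)$ s'\'etend \`a tout $\bR$ lorsque $A \to \infty$, ce qui explique la clause \og $A$ assez grand\fg~: pour $A$ suffisamment grand, le $\delta$ fix\'e s'y trouve et l'op\'erateur $-\Delta + A$ y est un isomorphisme de $\mathcal{C}^{k,\alpha}_\delta$ sur $\mathcal{C}^{k-2,\alpha}_\delta$. Une variante plus directe consiste \`a \'etablir que l'op\'erateur global $u \mapsto \bigl(-\Delta u + Au,\ (\partial_\nu u + Au)|_{\bint}\bigr)$, \`a valeurs dans $\mathcal{C}^{k-2,\alpha}_\delta \times \mathcal{C}^{k-1, \alpha}(\bint)$, est Fredholm d'indice $0$ (la condition au bord sur $\bint$ compact n'\'etant qu'une perturbation compacte du probl\`eme sans bord int\'erieur trait\'e par Lee) ; combin\'e \`a l'unicit\'e, cela conclut \`a l'existence.
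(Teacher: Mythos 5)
Your plan replaces the paper's exhaustion-by-compacts argument with Lax--Milgram followed by a Fredholm-theoretic upgrade of regularity. The two routes differ genuinely, and the Fredholm ``variant'' you sketch at the end is in spirit the cleanest way to present this result. However, as written the proof has a concrete gap and a secondary one.

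The main gap is in the Lax--Milgram step. For the right-hand side $v \mapsto \int_M g v\, d\mu_g + \int_{\bint} h v\, d\sigma$ to define a bounded functional on $W^{1,2}(M, d\mu_g)$, you need $g \in L^2(M, d\mu_g)$. Near $\binf$ the volume element behaves like $\rho^{-n}$, so $g \in \mathcal{C}^{k-2,\alpha}_\delta$ gives $\int_M |g|^2\, d\mu_g \lesssim \int_0^\sigma \rho^{2\delta - n}\, d\rho$, which converges only if $\delta > \tfrac{n-1}{2}$. The lemma, however, is stated for every $\delta \in \bR$ and is invoked in the monotonicity iteration precisely for $\delta = 0$, a case in which $g$ is merely bounded and hence not in $L^2$. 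Conjugating by $\rho^\delta$ does not save the variational setup, since after conjugation you still need the new right-hand side to be in $L^2$. So the variational formulation simply does not produce the claimed solution outside of the high-weight range, and the core of your existence argument must be replaced by something else for the weights that actually matter.

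A secondary issue: your uniqueness argument applies Lemma \ref{MaxYau} directly to a solution $u$ of the homogeneous problem, but that lemma requires $u$ to be bounded above. For $\delta < 0$, functions in $\mathcal{C}^{k,\alpha}_\delta$ need not be bounded. The fix, as in the paper's barrier computation, is to work with $v = u/\rho^\delta$: after using $-\Delta\rho^\delta + A\rho^\delta \geq \tfrac{A}{2}\rho^\delta$ (valid for $A$ large) one gets an elliptic equation for $v$ with a strictly positive zeroth-order coefficient and bounded solution, to which the generalized maximum principle \emph{does} apply. For $\delta \geq 0$ your argument is fine.

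Finally, while the ``Fredholm of index $0$'' observation is the right idea, calling the Robin condition on $\bint$ a \emph{compact perturbation} of the problem without interior boundary is not accurate: introducing $\bint$ changes the domain and the function spaces, not just a lower-order term. What is actually needed (and what the paper does later for $\Delta_{TT}$) is a patching estimate, combining Lee's weighted Schauder/Fredholm estimate near $\binf$ with ADN estimates near the compact boundary $\bint$, to obtain semi-Fredholmness, and then a duality or approximation argument for surjectivity. The paper's own proof of the present lemma avoids all of this by working directly: it solves the problem on the nested compact domains $\Omega_i = \rho^{-1}([2^{-i},\infty))$ with a Dirichlet condition on the artificial inner boundary, derives uniform $\mathcal{C}^{k,\alpha}_\delta$-bounds on $\Omega_{i-1}$ via interior Schauder estimates, boundary Schauder estimates on $\bint$, and the barrier $\rho^\delta$ for the $\mathcal{C}^{0,0}_\delta$ bound, and then passes to the limit by Ascoli and a diagonal extraction. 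That route handles every $\delta$ uniformly at the cost of being more hands-on; your Fredholm route would be shorter, but needs the weight restriction removed and the Fredholm claim substantiated.
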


\begin{proof}
Posons $\Omega_i = \rho^{-1}\left(\left[\frac{1}{2^i}; \infty\right[\right)$. Choisissant $i$ assez grand, on peut supposer que $\bint \subset \Omega_i$. $\Omega_i$ est compact donc le probl\`eme :

$$
\left\lbrace
\begin{array}{rcl}
-\Delta u + A u & = & g\quad\mathrm{sur~}\Omega_i\\
\partial_\nu u + A u & = & h\quad\mathrm{sur~}\bint\\
u & = & 0\quad\mathrm{sur~}\partial\Omega_i \setminus \bint
\end{array}
\right.
$$

\noindent admet une unique solution $u_i \in \mathcal{C}^{k, \alpha}\left(\Omega_i\right)$ (On voit facilement que la seule solution du probl\`eme homog\`ene est $0$ et l'op\'erateur est Fredholm d'index 0). En constatant que si $p_0 \in \Omega_i$, $B_{\log 2}(p_0) \subset \Omega_{i+1}$, on a, en utilisant les estimations de Schauder internes \cite[Lemme 4.8]{LeeFredholm} et au niveau de $\bint$ \cite[Lemme 6.29]{GilbargTrudinger} :

\begin{eqnarray*}
\left\| u_i \right\|_{\mathcal{C}^{k, \alpha}_\delta\left(\Omega_{i-1}\right)}
	& \leq & C \left( \| g \|_{\mathcal{C}^{k-2, \alpha}_\delta\left(\Omega_i\right)} + \| h \|_{\mathcal{C}^{k-1, \alpha}\left(\bint\right)}
	 + \| u_i \|_{\mathcal{C}^{0, 0}_\delta\left(\Omega_i\right)} \right)\\
	& \leq & C \left( \| g \|_{\mathcal{C}^{k-2, \alpha}_\delta\left(M\right)} + \| h \|_{\mathcal{C}^{k-1,  \alpha}\left(\bint\right)}
	 + \| u_i \|_{\mathcal{C}^{0, 0}_\delta\left(\Omega_i\right)} \right)
\end{eqnarray*}

\noindent o\`u $C$ est une constante ind\'ependante de $i$. Reste \`a estimer $\| u_i \|_{\mathcal{C}^{0, 0}_\delta\left(\Omega_i\right)}$  :

\begin{eqnarray*}
-\Delta \rho^\delta + A \rho^\delta
	& = & -\rho^2 \left(\overline\Delta \rho^\delta - (n-2) \left\langle\frac{\overline\nabla \rho}{\rho}, \overline\nabla \rho^\delta \right\rangle_{\gbar}\right) + A \rho^\delta\\
	& = & -\rho^2 \left( \delta \rho^{\delta-1} \overline\Delta \rho + \delta(\delta-1) \left|\overline\nabla \rho\right|^2_{\gbar} \rho^{\delta-2}
		-(n-2) \delta \left|\overline\nabla \rho\right|^2_{\gbar}\rho^{\delta-2}\right) + A \rho^\delta\\
	& = & \left(\delta(n-1-\delta) \left|\overline\nabla\rho\right|^2_{\gbar} + A - \delta \rho \overline\Delta\rho\right) \rho^\delta.
\end{eqnarray*}

Donc si $A$ est assez grand, on a $-\Delta \rho^\delta + A \rho^\delta \geq \frac{A}{2} \rho^\delta$. Maintenant :

\begin{eqnarray*}
-\Delta u_i 	& = & -\Delta \left( \rho^\delta \frac{u_i}{\rho^\delta}\right)\\
		& = & -\frac{u_i}{\rho^\delta} \Delta \rho^\delta - \left\langle \nabla \rho^\delta, \nabla \frac{u_i}{\rho^\delta}\right\rangle_g
			- \rho^\delta \Delta \frac{u_i}{\rho^\delta}\\
g - A u_i 	& = & -\frac{u_i}{\rho^\delta} \Delta \rho^\delta - \left\langle \nabla \rho^\delta, \nabla \frac{u_i}{\rho^\delta}\right\rangle_g
			- \rho^\delta \Delta \frac{u_i}{\rho^\delta}\\
\frac{g}{\rho^\delta} - A \frac{u_i}{\rho^\delta}
		& = & -\frac{u_i}{\rho^\delta} \frac{\Delta \rho^\delta}{\rho^\delta}
		      - \left\langle \frac{\nabla \rho^\delta}{\rho^\delta}, \nabla \frac{u_i}{\rho^\delta}\right\rangle_g - \Delta \frac{u_i}{\rho^\delta}\\
\frac{g}{\rho^\delta}
		& = & \frac{u_i}{\rho^\delta} \left(-\frac{\Delta \rho^\delta}{\rho^\delta} + A \right)
		      - \delta \left\langle \frac{\nabla \rho}{\rho}, \nabla \frac{u_i}{\rho^\delta}\right\rangle_g - \Delta \frac{u_i}{\rho^\delta}.
\end{eqnarray*}

En distinguant les cas o\`u $\left|\frac{u_i}{\rho^\delta}\right|$ est maximal en un point int\'erieur \`a $\Omega_i$ et le cas o\`u le maximum est atteint sur $\bint$, on a alors que $\left\| \frac{u_i}{\rho^\delta} \right\|_\infty \leq \frac{2}{A} \| \frac{g}{\rho^\delta} \|_\infty + \frac{1}{B} \|h\|_\infty$ (o\`u on a pos\'e $B=\min_{\bint}\left(\rho^\delta A+\delta \rho^{\delta-1} \partial_\nu \rho\right)$, $B > 0$ si $A$ est assez grand, ce qu'on supposera par la suite). Les fonctions $u_i$ sont donc born\'ees en norme $\mathcal{C}^{2, \alpha}_\delta$ sur les compacts de $M$. Le th\'eor\`eme d'Ascoli et un proc\'ed\'e d'extraction diagonal permettent alors, par une m\'ethode analogue \`a celle utilis\'ee dans la preuve de \cite[Proposition 3.7]{GrahamLee}, de trouver une fonction $u \in \mathcal{C}^{0, 0}_\delta(M) \bigcap \mathcal{C}^{2,0}_{loc}(M)$ qui satisfait \`a :

$$
\left\lbrace
\begin{array}{rcl}
-\Delta u + A u & = & g\quad\mathrm{sur~} M\\
\partial_\nu u + A u & = & h\quad\mathrm{sur~}\bint.\\
\end{array}
\right.
$$

On a alors $u \in \mathcal{C}^{2, \alpha}_\delta(M)$. Par le principe du maximum g\'en\'eralis\'e (lemme \ref{MaxYau}), on constate que $u$ est l'unique solution de ce probl\`eme. Finalement en utilisant la r\'egularit\'e elliptique \cite{LeeFredholm}, on a $u \in \mathcal{C}^{k, \alpha}_\delta(M)$.
\end{proof}

Revenons maintenant \`a la preuve de la m\'ethode de monotonie :

\begin{proof}[D\'emonstration de la m\'ethode de monotonie (Proposition \ref{propMonotonie})]
Posons $\lambda = \min_M \varphi_-$ et $\Lambda = \max_M \varphi_+$. Choissisons une constante $A$ assez grande telle que, pour tout $p \in M$, les fonctions $\varphi \mapsto A \varphi + F(p, \varphi)$ soient croissantes sur $[\lambda; \Lambda]$ de m\^eme que les fonctions $\varphi \mapsto A \varphi + f(p, \varphi)$ pour tout $p \in \bint$ et telle que le lemme \ref{pbLin} soit v\'erifi\'e pour $\delta=0$. On peut alors d\'efinir une suite $\left(\varphi_i\right)_i$ de fonctions telles que $\varphi_0 = \varphi_+$ et telles que $\varphi_{i+1}$ soit l'unique solution dans $\mathcal{C}^{2, \alpha}_0(M)$ de :

$$
\left\lbrace
\begin{array}{rcl}
-\Delta \varphi + A \varphi & = & A \varphi_i + F\left(p, \varphi_i\right)\quad\mathrm{sur~}M\\
\partial_\nu \varphi + A \varphi & = & A \varphi_i + f\left(p, \varphi_i\right)\quad\mathrm{sur~}\bint.\\
\end{array}
\right.
$$

L'existence et l'unicit\'e de $\varphi_{i+1}$ sont garanties par le lemme \ref{pbLin}. La suite des fonctions $\varphi_i$ est d\'ecroissante et minor\'ee par $\varphi_-$. En effet, montrons par exemple qu'on a $\varphi_1 \leq \varphi_0 = \varphi_+$. Ceci revient au m\^eme que montrer que la fonction $\varphi_1 - \varphi_0$ est partout n\'egative. Supposons par l'absurde qu'il existe un point $p \in M$ tel que $\varphi_1(p) - \varphi_0(p) > 0$. On a trois cas :\\

\begin{itemize}
\item Soit le supremum de $\varphi_1 - \varphi_0$ est atteint en un point $p \in \bint$. Dans ce cas $\partial_\nu \left(\varphi_1 - \varphi_0\right) \leq 0$. Mais $\partial_\nu \varphi_1 + A \varphi_1 = f(p, \varphi_0) + A \varphi_0 \leq \partial_\nu \varphi_0 + A \varphi_0$. Ce qui impose $A(\varphi_0-\varphi_1) \geq 0$ : absurde.\\

\item Soit le supremum de $\varphi_1 - \varphi_0$ est atteint en un point int\'erieur \`a $M$ et il existe $0 < \epsilon < \sup_M \left(\varphi_1 - \varphi_0\right)$ et un compact $K \subset \mathring{M}$ tels que sur $M \setminus K$, $\varphi_1 - \varphi_0 \leq \sup_M \left(\varphi_1 - \varphi_0\right) - \epsilon$. Dans ce cas :

$$\int_M \left(-\Delta \psi + A \psi\right) \left(\varphi_1 - \varphi_0\right) \leq 0\qquad\forall~\psi\in \mathcal{C}^{2, 0}_c.$$

Comme $\varphi_0$ et $\varphi_1$ sont localement lipschitziennes et $\psi = 0$ au voisinage de $\partial M$, on a :

$$
\int_M \left( \left\langle\nabla \psi, \nabla \left(\varphi_1 - \varphi_0\right)\right\rangle + A \psi \left(\varphi_1 - \varphi_0\right) \right)\leq 0.
$$

Par densit\'e, ce r\'esultat reste vrai pour toute fonction $\psi$ lipschitzienne \`a support compact. En particulier pour
$\psi_\epsilon = \max \left\{\varphi_1 - \varphi_0 - \frac{\epsilon}{2}, 0 \right\}$ :

$$\int_{\left\{ \varphi_1 \geq \varphi_0 - \frac{\epsilon}{2} \right\}} \left( \left| \nabla \left(\varphi_1 - \varphi_0 - \frac{\epsilon}{2}\right)\right|^2
	+ A \left(\varphi_1 - \varphi_0 - \frac{\epsilon}{2}\right)^2 \right) = \int_M \left( \left| \nabla \psi_\epsilon \right|^2 + A \psi_\epsilon^2 \right) \leq 0$$
	
\noindent absurde car $\psi_\epsilon \neq 0$.\\

\item Sinon, le supremum est atteint \`a l'infini, posons $F = \sup_M \left(\varphi_1 - \varphi_0\right) - \left(\varphi_1 - \varphi_0\right)$. Il existe une suite de points $p_i \in M$ telle que $F(p_i) \to 0$. Par hypoth\`ese, cette suite sort de tout compact de $M$. Quitte \`a extraire une sous-suite, on peut supposer que $p_i \to \widehat{p} \in \binf$. On choisit alors une carte $\left(\rho, \theta^1, \cdots, \theta^{n-1}\right)$ au voisinage de $\widehat{p}$. Posons ensuite $r_i = \frac{\rho(p_i)}{2}$ et

$$g_i(p) = 1 - \frac{\left(\rho(p)-\rho(p_i)\right)^2 + \sum_j \left(\theta^j(p)-\theta^j(p_i)\right)^2}{r_i^2}.$$

On a \mbox{$\max_{\{g_i \geq 0\}} \left|\partial_\alpha g_i\right| \leq \frac{2}{r_i}$},
\mbox{$\max_{\{g_i \geq 0\}} \left|\partial_\alpha \partial_\beta g_i\right| \leq \frac{2}{r_i^2}$} donc
\mbox{$\sup_{\{g_i \geq 0\}} \left|dg_i\right|_g \leq C$}, \mbox{$\sup_{\{g_i \geq 0\}} \left|\Delta g_i\right|_g \leq C$} o\`u $C$ est une constante ind\'ependante de $i$. Soit $q_i$ un point o\`u $\frac{F}{g_i}$ atteint son minimum dans ${\{g_i \geq 0\}}$. $F(q_i) \leq \frac{g_i(q_i)}{g_i(p_i)} F(p_i) \leq F(p_i)$ donc $F(q_i) \to 0$. On d\'efinit ensuite
$h_i = \max\left\{0, \epsilon_i - \frac{F}{g_i}\right\}$ o\`u $\min_{\{g_i \geq 0\}} \frac{F}{g_i} < \epsilon_i < 2\min_{\{g_i \geq 0\}} \frac{F}{g_i}$. On a alors :

\begin{eqnarray*}
A \sup \left(\varphi_1-\varphi_0 \right) \int_M h_i & \leq & \int_M \left(\left\langle \nabla h_i, \nabla F \right\rangle + A F h_i\right)\\
 & \leq & \int_M \left(\left\langle \nabla h_i, \nabla \frac{F}{g_i} \right\rangle g_i + \left\langle \nabla h_i, \nabla g_i \right\rangle \frac{F}{g_i} + A F h_i\right)\\
 & \leq & \int_M \left(-\left|\nabla h_i\right|^2_g g_i-\left\langle \nabla h_i^2, \nabla g_i\right\rangle+\epsilon_i\left\langle \nabla h_i,\nabla g_i\right\rangle\right)\\
 & \leq & \int_M \left(-\left|\nabla h_i\right|^2_g g_i+\left\langle h_i^2, \Delta g_i\right\rangle-\epsilon_i\left\langle h_i,\Delta g_i\right\rangle\right).\\
\end{eqnarray*}

On en d\'eduit en particulier que, pour une certaine constante $C > 0$,

$$0 < \int_M h_i \leq C \epsilon_i \int_M h_i$$

\noindent absurde car $\epsilon_i \to 0$.\\
\end{itemize}

On a donc montr\'e que $\varphi_1 \leq \varphi_0$ partout. On a donc $A \varphi_1 + F(p, \varphi_1) \leq A\varphi_0 + F(p, \varphi_0)$ et $A\varphi_1 + f(p, \varphi_1) \leq A\varphi_0 + f(p, \varphi_0)$. Par r\'ecurrence, on voit que la suite $\varphi_i$ est d\'ecroissante et une preuve analogue \`a la pr\'ec\'edente montre que $\varphi_i \geq \varphi_-$. Montrons maintenant que la suite $\varphi_i$ est born\'ee dans $\mathcal{C}^{2, \alpha}_0$. Pour cela, constatons qu'on a l'in\'egalit\'e suivante dont la preuve est directe :

$$
\left\| f\left(p,\varphi_i\right)\right\|_{C^{1, \alpha}(\bint)}
\leq  C \|f\|_{\mathcal{C}^{1, \alpha}\left(\bint \times [m, M]\right)} \left( 1 + \left\|\varphi_i\right\|_{\mathcal{C}^{1, \alpha}(\bint)}\right)^2
$$

\noindent on en d\'eduit alors l'in\'egalit\'e suivante :

\begin{eqnarray*}
\left\|\varphi_{i+1}\right\|_{\mathcal{C}^{2, \alpha}_0((M)}
	& \leq & C \left( \left\|-\Delta \varphi_{i+1} + A \varphi_{i+1}\right\|_{\mathcal{C}^{0, \alpha}_0(M)}
					 + \left\|\partial_\nu \varphi_{i+1} + A \varphi_{i+1}\right\|_{\mathcal{C}^{1, \alpha}(\bint)}
					 + \left\| \varphi_{i+1} \right\|_{\mathcal{C}^{0, 0}_0(M)} \right)\\
	& \leq & C \left( \left\|F(p, \varphi_i) + A \varphi_i\right\|_{\mathcal{C}^{0, \alpha}_0(M)}
					 + \left\|f(p, \varphi_i) + A \varphi_i\right\|_{\mathcal{C}^{1, \alpha}(\bint)}
					 + \left\| \varphi_{i+1} \right\|_{\mathcal{C}^{0, 0}_0(M)} \right)\\
	& \leq & C' \left( \left\|\varphi_i\right\|_{\mathcal{C}^{0, \alpha}_0(M)}
					 + \left(\left\|\varphi_i\right\|_{\mathcal{C}^{1, \alpha}(\bint)} + 1\right)^2
					 + \left\| \varphi_{i+1} \right\|_{\mathcal{C}^{0, 0}_0(M)} \right)
\end{eqnarray*}

\noindent o\`u pour obtenir la seconde ligne, on a utilis\'e le fait que $F$ est uniform\'ement lipschitzienne en $\varphi$ pour $m\leq\varphi\leq M$. Nous ne pouvons pas utiliser directement l'in\'egalit\'e d'interpolation pour majorer $\left\|\varphi_{i+1}\right\|_{\mathcal{C}^{2, \alpha}_0(M)}$ \`a cause du terme quadratique dans l'in\'egalit\'e pr\'ec\'edente. Il nous faut majorer $\left\|\varphi_{i}\right\|_{\mathcal{C}^{1, \alpha}(\bint)}$. Montrons tout d'abord comment cette majoration permet de conclure. Reprenant l'in\'egalit\'e pr\'ecedente, on en d\'eduit :

\begin{eqnarray*}
\left\|\varphi_{i+1}\right\|_{\mathcal{C}^{2, \alpha}_0}
						    & \leq & C'' \left( \|\varphi_i\|_{\mathcal{C}^{0, \alpha}_0} + 1\right)\\
						    & \leq & C^{(3)}_\mu\left(\|\varphi_i\|_{\mathcal{C}^{0, 0}_0} + 1\right) + \mu \|\varphi_i\|_{\mathcal{C}^{2, \alpha}_0}
						    				 \qquad\textrm{(in\'egalit\'e d'interpolation, \cite[lemme 6.35]{GilbargTrudinger})}\\
						    & \leq & C^{(3)}_\mu\left(\|\varphi_+\|_{\mathcal{C}^{0, 0}_0} + 1\right) + \mu \|\varphi_i\|_{\mathcal{C}^{2, \alpha}_0}.\\
\end{eqnarray*}

Choisissant $\mu \in (0; 1)$, on obtient alors par r\'ecurrence que $\left\|\varphi_i\right\|_{\mathcal{C}^{2, \alpha}_0}$ est born\'e ind\'ependamment de $i$ :

\begin{eqnarray*}
\left\|\varphi_i\right\|_{\mathcal{C}^{2, \alpha}_0}
	& \leq & \frac{C^{(3)}_\mu \left(\|\varphi_+\|_{\mathcal{C}^{0, 0}_0}+1\right)}{1-\mu} \left(1-\mu^i\right)
	+ \mu^i \left\|\varphi_+\right\|_{\mathcal{C}^{2, \alpha}_0}\\
	& \leq & \frac{C^{(3)}_\mu \left(\|\varphi_+\|_{\mathcal{C}^{0, 0}_0}+1\right)}{1-\mu} + \left\|\varphi_+\right\|_{\mathcal{C}^{2, \alpha}_0}.
\end{eqnarray*}

En utilisant les th\'eor\`emes d'Ascoli et de Dini, $\varphi_\infty = \inf_i \varphi_i$ est une fonction $\mathcal{C}^{2, \alpha}_{loc}$ sur $M$ et $\varphi_i$ converge en norme $\mathcal{C}^{2, \alpha}$ sur tout compact de $M$. On a alors que $\partial_\nu \varphi_\infty=f(p, \varphi_\infty)$ et $-\Delta \varphi_\infty = F(p, \varphi_\infty)$. On en d\'eduit que $\varphi_\infty \in \mathcal{C}^{2,\alpha}_0$. En utilisant le lemme \ref{pbLin}, on obtient alors\footnote{C'est ici qu'intervient l'hypoth\`ese $\varphi \geq \epsilon > 0$. En effet $\varphi \mapsto \varphi^{\beta_i}$ n'est pas born\'e en norme $\mathcal{C}^{k, \alpha}$ sur $\bR^*_+$ si $\beta_i < k + \alpha$} $\varphi_\infty \in \mathcal{C}^{k, \alpha}_0$.\\

Montrons finalement comment majorer $\|\varphi_i\|_{\mathcal{C}^{1, \alpha}(\bint)}$. Pour cela, choisissons $\Omega,~\Omega'$ des ouverts r\'eguliers, relativement compacts de M tels que $\bint \subset \overline{\Omega}$ et $\Omega \subset\subset \Omega'$. L'estimation a priori pour les espaces de Sobolev sur $\Omega$ s'\'ecrit :

$$
\left\|\varphi_{i+1} \right\|_{W^{2, p}(\Omega)}
	\leq C \left(\left\|-\Delta \varphi_{i+1} + A \varphi_{i+1}\right\|_{L^p (\Omega)}
		+ \left\|\partial_\nu \varphi_{i+1} + A \varphi_{i+1}\right\|_{W^{1-\frac{1}{p}, p}(\bint)}
		+ \left\|\varphi_{i+1}\right\|_{L^p (\Omega')} \right).
$$

Pour $p = \frac{n}{1-\alpha}$, on a l'injection de Sobolev $W^{2, p}(\Omega) \into \mathcal{C}^{1, \alpha}\left(\overline{\Omega}\right)$ et :

\begin{eqnarray*}
\left\|\varphi_{i+1} \right\|_{\mathcal{C}^{1, \alpha}(\overline{\Omega})}
	& \leq & C' \left(\left\|F(p, \varphi_i) + A \varphi_i\right\|_{L^p (\Omega)}
		+ \left\|f(p, \varphi_i) + A \varphi_i\right\|_{W^{1-\frac{1}{p}, p}(\bint)}
		+ \left\|\varphi_{i+1}\right\|_{L^p (\Omega')} \right)\\
	& \leq & C''\left(\sup_{p\in\Omega}\left|F(p, \varphi_i(p)) + A \varphi_i(p)\right|
		+ \left\|f(p, \varphi_i) + A \varphi_i\right\|_{C^{1, 0}(\bint)}
		+ \sup_{p \in \Omega'}\left|\varphi_{i+1}(p)\right|\right)\\
	& \leq & C''\left(\sup_{p\in\Omega}\left|F(p, \varphi_+(p)) + A \varphi_+(p)\right|
		+ \sup_{p \in \Omega'}\left|\varphi_+(p)\right|\right.\\
	& & \left.+ \|f\|_{\mathcal{C}^{1, 0}\left(\bint \times [m, M]\right)} \left( 1 + \left\|\varphi_i\right\|_{\mathcal{C}^{1, 0}(\bint)}\right)\right)\\
	& \leq & C^{(3)} \left( 1 + \left\|\varphi_i\right\|_{\mathcal{C}^{1, 0}(\overline{\Omega})}\right).
\end{eqnarray*}

En appliquant, comme pr\'ec\'edemment, l'in\'egalit\'e d'interpolation \cite[Lemme 6.35]{GilbargTrudinger}, on obtient :

\begin{eqnarray*}
\left\|\varphi_{i+1} \right\|_{\mathcal{C}^{1, \alpha}(\overline{\Omega})}
	& \leq & \mu \left\|\varphi_i \right\|_{\mathcal{C}^{1, \alpha}(\overline{\Omega})}
		+ C^{(3)}_\mu \left(1+\left\|\varphi_i\right\|_{\mathcal{C}^{0, 0}(\overline{\Omega})}\right)\\
	& \leq & \mu \left\|\varphi_i \right\|_{\mathcal{C}^{1, \alpha}(\overline{\Omega})}
		+ C^{(3)}_\mu \left(1+\left\|\varphi_+\right\|_{\mathcal{C}^{0, 0}(\overline{\Omega})}\right).
\end{eqnarray*}

Un raisonnement identique au pr\'ec\'edent permet de conclure que les $\left\|\varphi_i \right\|_{\mathcal{C}^{1, \alpha}(\overline{\Omega})}$, donc en particulier les $\left\|\varphi_i \right\|_{\mathcal{C}^{1, \alpha}(\bint)}$ sont born\'es.
\end{proof}

\section{R\'esolution de l'\'equation de prescription de la courbure scalaire}

Nous regardons ici le probl\`eme de l'existence et de l'unicit\'e de solutions \`a l'\'equation :

\begin{equation}
-\frac{4 (n-1)}{n-2} \Delta \varphi + \scal~\varphi = \hscal~\varphi^{\kappa+1}
\label{presc_scal}
\end{equation}

\noindent sur une vari\'et\'e $M$ asymptotiquement hyperbolique. $\scal$ d\'esigne ici la courbure scalaire de la m\'etrique $g$ et $\hscal$, la courbure scalaire de $\widehat{g} = \varphi^{\kappa} g$ avec $\kappa = \frac{4}{n-2}$, est une fonction donn\'ee.

\subsection{Probl\`eme de Dirichlet le long de $\bint$}
\subsubsection{Existence et unicit\'e de la solution}

On regarde ici le cas o\`u on prescrit $\varphi=\varphi_0$ le long de $\partial_0 M$ ($\varphi_0 > 0$). On va montrer le th\'eor\`eme suivant :

\begin{theorem}[Solutions de l'\'equation de prescription de la courbure scalaire avec condition au bord de Dirichlet]\label{PrescScalDiri} Soient deux fonctions $\scal,~\hscal \in \mathcal{C}^{k-2, \alpha}_0(M)$ ($k+\alpha \leq l + \beta$) avec $\hscal < 0$ et telles que $\scal,~\hscal \to_{\binf} -n(n-1)$ et $\varphi_0 > 0$ une fonction $\mathcal{C}^{k, \alpha}$ sur $\partial_0 M$ alors il existe une solution unique $\varphi \in~\mathcal{C}^{k, \alpha}_0$ telle que $\varphi > 0$ et $\varphi \to_{\binf} 1$, au probl\`eme de Dirichlet :

\begin{equation}
\label{EqPrescScalDiri}
\left\lbrace
\begin{array}{rcl}
-\frac{4 (n-1)}{n-2} \Delta \varphi + \scal~\varphi & = & \hscal~\varphi^{\kappa+1}\\
\varphi & = & \varphi_0\quad\mathrm{sur}~\bint.\\
\end{array}
\right.
\end{equation}

\end{theorem}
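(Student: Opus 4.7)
\emph{Plan.} Je pose $c_n := \frac{4(n-1)}{n-2}$ et réécris l'équation \eqref{EqPrescScalDiri} sous la forme $-\Delta\varphi = F(p, \varphi)$ avec
\begin{equation*}
F(p, \varphi) = \frac{1}{c_n}\left(\hscal\,\varphi^{\kappa+1} - \scal\,\varphi\right).
\end{equation*}
La stratégie est d'appliquer la méthode de monotonie (Proposition \ref{propMonotonie}, cas Dirichlet) pour obtenir une solution positive bornée, puis d'analyser son comportement asymptotique via la théorie Fredholm de \cite{LeeFredholm}.

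\emph{Construction des barrières.} Comme $\hscal$ est continue sur $\overline{M}$ et tend vers $-n(n-1) < 0$ à l'infini, elle est majorée par une constante strictement négative, ce qui rend le rapport $\scal/\hscal$ borné sur $M$. Toute constante $C > 0$ vérifiant $C \geq \sup_{\bint}\varphi_0$ et $C^\kappa \geq \sup_M(\scal/\hscal)$ donne $F(p, C) \leq 0 = -\Delta C$, donc $\varphi_+ := C$ est une sur-solution. Si $\scal < 0$ partout, une petite constante $\varphi_- := \epsilon$ (avec $\epsilon \leq \inf_{\bint}\varphi_0$ et $\epsilon^\kappa \leq \inf_M(\scal/\hscal)$) est une sous-solution. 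Dans le cas général, on prend $\varphi_- := \epsilon\,h$ avec $h > 0$ régulière telle que $-c_n\Delta h + \scal\,h < 0$ sur $M$ : la négativité de $\scal$ près de $\binf$ permet de choisir $h \equiv 1$ hors d'un compact, et on la modifie à l'intérieur par un terme strictement sous-harmonique dans les zones où $\scal \geq 0$ (alternativement, un changement conforme préliminaire $g \mapsto u^\kappa g$ ramène au cas $\scal < 0$). Les conditions d'ordre $\varphi_- \leq \varphi_+$ sur $M$ et $\varphi_- \leq \varphi_0 \leq \varphi_+$ sur $\bint$ s'obtiennent en prenant $\epsilon$ petit et $C$ grand ; la Proposition \ref{propMonotonie} fournit alors une solution $\varphi \in \mathcal{C}^{k, \alpha}_0$ avec $\epsilon \leq \varphi \leq C$.

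\emph{Unicité et décroissance.} Pour l'unicité, soient $\varphi_1, \varphi_2 > 0$ deux solutions tendant vers $1$ à l'infini et $w := \log(\varphi_1/\varphi_2)$. En écrivant les équations satisfaites par $\log\varphi_i$ et en les soustrayant, puis en appliquant le lemme \ref{MaxYau} (ou le principe du maximum classique selon l'endroit où $w$ atteint son extremum), la monotonie stricte du second membre garantie par $\hscal < 0$ entraîne $w \equiv 0$. Pour l'asymptotique, en posant $\varphi = 1 + w$ on trouve que $w$ satisfait
\begin{equation*}
-c_n \Delta w + a(p)\, w = \hscal - \scal,
\end{equation*}
où $a(p) = \scal - \hscal \cdot \frac{(1+w)^{\kappa+1}-1}{w}$ (prolongé par continuité en $w=0$) tend vers $\kappa\, n(n-1) > 0$ à l'infini. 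L'opérateur linéarisé, asymptotique à $-c_n\Delta + \kappa\, n(n-1)$, est isomorphisme $\mathcal{C}^{k, \alpha}_\delta \to \mathcal{C}^{k-2, \alpha}_\delta$ pour $\delta \in [0, n)$ d'après \cite{LeeFredholm}. Comme $\hscal - \scal \to 0$ à l'infini, on en déduit $w \to 0$ ; et plus finement, si $\scal - \hscal \in \mathcal{C}^{k-2, \alpha}_\delta$, alors $w \in \mathcal{C}^{k, \alpha}_\delta$ par régularité elliptique pondérée (bootstrap).

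\emph{Obstacle principal.} Le point le plus délicat est la construction de la sous-solution lorsque $\scal$ change de signe : il faut exhiber une fonction strictement positive sur laquelle l'opérateur conforme $-c_n\Delta + \scal$ agit en donnant un signe strictement négatif partout. Cela s'appuie essentiellement sur la négativité asymptotique $\scal \to -n(n-1)$, qui permet de recoller la solution triviale $h \equiv 1$ à l'infini avec une modification intérieure adaptée (ou, au prix d'un changement conforme préalable, de se ramener à $\scal < 0$ globalement).
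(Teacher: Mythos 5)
Your overall scaffolding — monotone iteration between barriers, a generalized maximum principle to pin down the boundary value at $\binf$, and the $\varphi = e^\theta$ substitution for uniqueness — matches the paper's. The supersolution $\varphi_+ \equiv C$ is exactly the paper's choice, and the uniqueness argument via $\log\varphi$ and the strict negativity of $\hscal$ is essentially identical. Where you diverge, and where there is a real gap, is the subsolution.

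You try to build a \emph{globally strictly positive} subsolution $\varphi_- = \epsilon h$, which forces you to produce $h>0$ with $-\frac{4(n-1)}{n-2}\Delta h + \scal\,h < 0$ everywhere. You correctly identify this as the hard point when $\scal$ changes sign, but neither of your two escape routes is established. The ``subharmonic modification'' is not carried out: near $\partial K$ one has $h = 1 + O(d^2)$ and hence $\Delta h$ bounded, so the inequality $c_n\Delta h > \scal\,h$ requires a careful choice of $K$, and you give no construction. The ``preliminary conformal change to $\scal < 0$'' is essentially the theorem being proved (it amounts to solving $-c_n\Delta u + \scal\,u < 0$ with $u > 0$ bounded, i.e., producing a subsolution to the prescription problem with $\hscal<0$), so it cannot serve as a lemma here without a separate argument. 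The paper avoids all of this with the much lighter choice $\varphi_\sigma = \max\{\sigma - \rho, 0\}$: it is a \emph{distributional} subsolution (which is precisely why Proposition~\ref{propMonotonie} was formulated for weak sub/supersolutions), it vanishes on $\bint$ and on $M\setminus M_\sigma$ (so no condition on $\scal$ is needed there since $F(p,0)=0$ and $\kappa+1\ge 1$), and the verification on $M_\sigma$ only uses $\scal,\hscal\to -n(n-1)$. Since $\varphi_\sigma \to \sigma > 0$ at $\binf$, this also gives the $\liminf_{\binf}\varphi > 0$ needed to exclude the trivial solution; positivity of $\varphi$ in the interior is then recovered by Hopf's lemma after the iteration, not before.

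A second, smaller imprecision: the inference ``$\hscal - \scal \to 0$ à l'infini $\Rightarrow w\to 0$'' does not follow from the isomorphism $\mathcal{C}^{k,\alpha}_\delta \to \mathcal{C}^{k-2,\alpha}_\delta$. For $\delta = 0$ these spaces contain non-decaying functions, so being in the image of the isomorphism with a source tending to zero pointwise does not force the preimage to decay. What is actually needed is a maximum-principle argument at infinity: take a sequence $p_i$ along which $\varphi(p_i)\to\limsup_{\binf}\varphi$ and $\Delta\varphi(p_i)\lesssim 0$ (Lemma~\ref{limsup}), plug into the equation, and conclude $\limsup_{\binf}\varphi \leq 1$; then the same with $\liminf$, using the positive lower barrier near $\binf$ to exclude $\liminf = 0$. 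Also note that your claim $a(p)\to \kappa\,n(n-1)$ presupposes $w\to 0$, which is what you are trying to prove; what you actually need (and what holds, by convexity of $x\mapsto(1+x)^{\kappa+1}$ and the lower bound $\varphi\geq\epsilon$) is that $a$ stays bounded below by a positive constant near $\binf$. Once $w\in\mathcal{C}^{0,0}_\delta$ is established by barriers as in Lemma~\ref{compBord}, the upgrade to $\mathcal{C}^{k,\alpha}_\delta$ by weighted elliptic regularity is fine, but that decay statement is Theorem~\ref{comportementBinf}, not part of Theorem~\ref{PrescScalDiri} proper.
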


Ce th\'eor\`eme utilise le lemme suivant :

\begin{lemma}\label{limsup} Soit $f : M \to \bR$ une fonction $\mathcal{C}^2$ born\'ee. Il existe une suite $\left(p_i\right)_{i \in \mathbb{N}},~p_i \in M$ telle que :

\begin{enumerate}
\item $f(p_i) \to \underset{\binf}{\mathrm{lim~sup}} f$
\item $\left| \nabla f\left(p_i\right) \right|_g \to 0$
\item $\limsup_{i \to \infty} \Delta f(p_i) \leq 0$.
\end{enumerate}
\end{lemma}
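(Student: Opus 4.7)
Le plan est d'adapter au $\limsup$ à l'infini la technique par fonctions bosse utilisée dans le troisième cas de la preuve de la Proposition \ref{propMonotonie} (cette technique étant elle-même issue de la preuve de Graham-Lee du Lemme \ref{MaxYau}). L'idée centrale est de remplacer la « quasi-maximisation » globale de $f$ par une « quasi-maximisation » localisée près de $\binf$, sur le support d'une bosse hyperboliquement petite.

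Soit $L=\limsup_{\binf} f$ (on peut supposer $L\in\bR$, le cas $L=-\infty$ étant trivial). Je commencerais par choisir une suite $q_i\in M$ telle que $\rho(q_i)\to 0$ et $f(q_i)\to L$, puis, quitte à extraire, telle que $q_i\to \widehat{p}\in\binf$. On fixe une carte au bord $(\rho,\theta^1,\dots,\theta^{n-1})$ près de $\widehat{p}$, et, comme dans la preuve du troisième cas de la Proposition \ref{propMonotonie}, on pose $r_i = \rho(q_i)/2$ et
$$
g_i(p) = 1 - \frac{(\rho(p)-\rho(q_i))^2 + \sum_j (\theta^j(p)-\theta^j(q_i))^2}{r_i^2}.
$$
Le même calcul que dans \ref{propMonotonie} montre que $\sup_{\{g_i\geq 0\}}|dg_i|_g\leq C$ et $\sup_{\{g_i\geq 0\}}|\Delta g_i|_g\leq C$, la constante $C$ étant indépendante de $i$ (c'est ici qu'intervient, à travers la forme $g=\rho^{-2}\gbar$, le fait que sur $\{g_i\geq 0\}$ on ait $\rho\asymp r_i$ uniformément).

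Ensuite, je choisirais $\eta_i\to 0^+$ tel que $L+\eta_i > \sup_{\{g_i\geq 0\}} f$ ; cela est possible en rétrécissant si nécessaire $r_i$, car, par définition de $L=\limsup_{\binf} f$, on a $f<L+\eta_i$ dans un voisinage de $\binf$, tandis que $\{g_i\geq 0\}$ reste contenu dans un tel voisinage dès que $\rho(q_i)$ est assez petit. On définit alors $F_i = L+\eta_i - f$, strictement positif sur $\{g_i\geq 0\}$. La fonction $F_i/g_i$, bien définie sur $\{g_i>0\}$, tend vers $+\infty$ au bord de cet ouvert, donc atteint son minimum en un point intérieur $p_i\in\{g_i>0\}$. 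En évaluant en $q_i$,
$$
\frac{F_i(p_i)}{g_i(p_i)} \leq \frac{F_i(q_i)}{g_i(q_i)} = L+\eta_i - f(q_i) \xrightarrow[i\to\infty]{} 0.
$$

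Il ne reste qu'à lire les trois propriétés annoncées sur le point critique $p_i$. Puisque $g_i(p_i)\leq 1$, la suite $F_i(p_i)$ tend vers $0$, ce qui donne $f(p_i)\to L$. La condition $\nabla(F_i/g_i)(p_i)=0$ s'écrit $g_i\nabla F_i = F_i\nabla g_i$, d'où
$$
|\nabla f(p_i)|_g = |\nabla F_i(p_i)|_g = \frac{F_i(p_i)}{g_i(p_i)}|\nabla g_i(p_i)|_g \leq C\cdot \frac{F_i(q_i)}{g_i(q_i)} \to 0.
$$
Enfin, en écrivant $\Delta(F_i/g_i) = g_i^{-1}\Delta F_i - 2 g_i^{-1}\langle\nabla(F_i/g_i),\nabla g_i\rangle - g_i^{-1}(F_i/g_i)\Delta g_i$ et en utilisant $\nabla(F_i/g_i)(p_i)=0$, $\Delta(F_i/g_i)(p_i)\geq 0$, on obtient $\Delta F_i(p_i)\geq (F_i(p_i)/g_i(p_i))\,\Delta g_i(p_i)\to 0$, donc $\limsup_i \Delta f(p_i)\leq 0$.

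L'étape la plus délicate est le choix simultané de $r_i$ et $\eta_i$ garantissant à la fois $F_i>0$ sur $\{g_i\geq 0\}$ et $F_i(q_i)\to 0$ : c'est ce qui force l'utilisation de $\limsup$ (et non de $\sup$) et ce qui explique qu'on localise les bosses $g_i$ dans des voisinages de $\binf$ de plus en plus petits. Les estimations uniformes sur $dg_i$ et $\Delta g_i$ ont déjà été justifiées dans la preuve de la Proposition \ref{propMonotonie}, donc le reste est un calcul direct.
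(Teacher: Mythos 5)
Votre preuve est correcte et suit essentiellement la même démarche que celle de l'article : mêmes bosses $g_i$ centrées en $q_i$ avec $r_i = \rho(q_i)/2$, même point critique $p_i$ du quotient $F_i/g_i$, mêmes bornes uniformes sur $|dg_i|_g$ et $|\Delta g_i|_g$ ; la seule différence est cosmétique, l'article prenant $F_i = \sup_{M_{1/i}} f + \tfrac1i - f$ (avec $q_i$ quasi-maximisant sur $M_{1/i}$) et travaillant avec $\Delta\log(F_i/g_i)\geq 0$ au lieu de $\Delta(F_i/g_i)\geq 0$. Une petite mise en garde tout de même : il ne faut pas « rétrécir $r_i$ » en dessous de $\rho(q_i)/2$ pour garantir $L+\eta_i>\sup_{\{g_i\geq 0\}} f$, car c'est précisément la comparaison $r_i\asymp\rho$ sur $\{g_i\geq 0\}$ qui fournit les bornes uniformes sur $|dg_i|_g$ et $|\Delta g_i|_g$ ; comme le montre d'ailleurs la fin de votre phrase, la bonne solution est un argument diagonal (en utilisant $\sup_{M_{3\rho(q_i)/2}} f \to L$ on peut prendre par exemple $\eta_i=\max\bigl(\sup_{\{g_i\geq 0\}}f-L,\,0\bigr)+\tfrac1i$), sans toucher à $r_i$.
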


\begin{proof} On choisit une suite de points $q_i \in M_{\frac{1}{i}}$ tels que $\sup_{M_{\frac{1}{i}}}f - f(q_i) \leq \frac{1}{i}$. On a donc $f(q_i) \to \underset{\binf}{\mathrm{lim~sup}}~f$. Quitte \`a extraire une sous-suite, on peut supposer que $q_i$ converge vers un point $\widehat{q} \in \binf$. On pose ensuite $F_i = \sup_{M_{\frac{1}{i}}}(f) + \frac{1}{i} - f$, ainsi $F_i \geq \frac{1}{i}$ sur $M_{\frac{1}{i}}$. Comme pr\'ec\'edemment, on introduit une carte $\left(\rho, \theta^1, \cdots, \theta^{n-1}\right)$ au voisinage de $\widehat{q}$, on pose ensuite $r_i = \frac{\rho(q_i)}{2}$ et

$$g_i(p) = 1 - \frac{\left(\rho(p)-\rho(q_i)\right)^2 + \sum_j \left(\theta^j(p)-\theta^j(q_i)\right)^2}{r_i^2}.$$

On a $\max_{\{g_i \geq 0\}} \left|\partial_\alpha g_i\right| \leq \frac{2}{r_i}$, $\max_{\{g_i \geq 0\}} \left|\partial_\alpha \partial_\beta g_i\right| \leq \frac{2}{r_i^2}$ donc $\sup_{\{g_i \geq 0\}} \left|dg_i\right|_g \leq C$, $\sup_{\{g_i \geq 0\}} \left|\Delta g_i\right|_g \leq C$ o\`u $C$ est une constante ind\'ependante de $i$. On choisit ensuite un point $p_i \in {\{g_i > 0\}}$ tel que $\frac{F_i}{g_i}(p_i)$ soit minimal. Remarquons que $\frac{F_i}{g_i} \to_{g_i\to 0^+} \infty$ donc le minimum est atteint. En un tel point $0 \leq \frac{F_i}{g_i}(p_i) \leq F_i(q_i) \leq \frac{2}{i}$ par cons\'equent on a encore $f(p_i) \to \underset{\binf}{\mathrm{lim~sup}}~f$. On a ensuite :

\begin{eqnarray*}
0 & = & \nabla \log\left(\frac{F_i}{g_i}\right)(p_i)\\
  & = & \frac{\nabla F_i}{F_i}(p_i) - \frac{\nabla g_i}{g_i}(p_i).
\end{eqnarray*}

Donc $\left|\nabla f\right|_g(p_i) = \left|\nabla F_i\right|_g (p_i) = \left|\frac{F_i}{g_i}\right|(p_i) \left|\nabla g_i\right|_g (p_i) \leq \frac{2C}{i} \to 0$. Ensuite, $0 \leq \Delta \log\left(\frac{F_i}{g_i}\right)(p_i) = \frac{\Delta F_i}{F_i}(p_i) - \frac{\Delta g_i}{g_i}(p_i)$. On en d\'eduit que $-\Delta f(p_i) = \Delta F_i \geq \frac{F_i~\Delta g_i}{g_i}(p_i) \to 0$. Ce qui montre que $\underset{i \to \infty}{\mathrm{lim~sup~}} \Delta f(p_i) \leq 0$. 

\end{proof}

\begin{proof}[D\'emonstration du th\'eor\`eme \ref{PrescScalDiri}] La d\'emonstration de ce th\'eor\`eme repose sur la m\'ethode de monotonie (Proposition \ref{propMonotonie}).  L'hypoth\`ese $\hscal < 0$ implique que $\varphi_+ = \Lambda$ avec $\Lambda$ grand est une sur-solution. On peut supposer de plus que $\Lambda \geq \varphi_0$. De m\^eme $\varphi_- = 0$ est une sous-solution naturelle. Cependant, c'est \'egalement une solution et il faut s'assurer que la m\'ethode de monotonie ne converge pas vers cette solution. Nous allons donc modifier $\varphi_-$ au voisinage de $\binf$ pour obtenir une solution $\varphi$ telle que $\varphi \to_{\binf} 1$. Pour cela, on introduit :

\begin{equation}
\label{phiSigma}
\varphi_\sigma = \max \left\{\sigma - \rho, 0\right\}
\end{equation}

Sur $M_\sigma$, si $\sigma$ est assez petit, on a :

\begin{eqnarray*}
-\frac{4(n-1)}{n-2} \Delta \varphi_\sigma + \scal~\varphi_\sigma
	& = & -\frac{4(n-1)}{n-2} \rho^2\left( -\overline{\Delta}\rho + (n-2) \frac{\left|\overline{\nabla}\rho\right|^2_{\gbar}}{\rho}\right) + \scal (\sigma - \rho)\\
	& = & \underbrace{-\frac{4(n-1)}{n-2}\left(-\rho^2\overline{\Delta}\rho+(n-2)\rho\left|\overline{\nabla}\rho\right|^2_{\gbar}\right)}_{\leq 0~\text{si}~\sigma~\text{assez petit}}+\scal (\sigma-\rho)\\
 & \leq & \scal (\sigma-\rho)\\
 & \leq & \hscal (\sigma - \rho)^{\kappa+1}.
\end{eqnarray*}

La derni\`ere in\'egalit\'e provient du fait que $\scal, \hscal \to_{\binf} -n(n-1)$ donc $\mathrm{min}_{M_\sigma} \frac{\scal}{\hscal} \to_{\sigma \to 0} 1$. On peut donc choisir $\sigma > 0$ assez petit tel que $\left(\sigma-\rho\right)^\kappa \leq \sigma^\kappa \leq \mathrm{min}_{M_\sigma} \frac{\scal}{\hscal}$. Ce qui montre que $\varphi_\sigma$ est une sous-solution sur $M_\sigma$ pour $\sigma > 0$ assez petit. Fixons un tel $\sigma > 0$. Quitte \`a diminuer $\sigma$, on peut supposer de plus que $d\rho$ est partout non nul sur $M_{2\sigma}$. Soit $\psi \in \mathcal{C}^2_c(M)$ une fonction test, on a :

\begin{eqnarray*}
\int_M (-\Delta \psi) \varphi_\sigma  & = & \int_M \left\langle \nabla\psi, \nabla\varphi_\sigma\right\rangle_g \qquad\text{($\varphi_\sigma$ est lipschitzienne)}\\
				      & = & \int_{M_\sigma} \left\langle \nabla\psi, \nabla\varphi_\sigma\right\rangle_g +
					\int_{M \setminus M_\sigma} \left\langle \nabla\psi, \nabla\varphi_\sigma\right\rangle_g\\
				      & = & \lim_{\sigma_1 \to 0^+}\left(\int_{M_{\sigma-\sigma_1}} (-\Delta\varphi_\sigma)\psi
				       +\int_{\{\rho=\sigma-\sigma_1\}}\psi\nabla_{N_{\sigma-\sigma_1}}\varphi_\sigma\right.\\
				      &   & \left. + \int_{M\setminus M_{\sigma+\sigma_1}} (-\Delta \varphi_\sigma) \psi
				        - \int_{\{\rho=\sigma+\sigma_1\}}\psi\nabla_{N_{\sigma+\sigma_1}}\varphi_\sigma\right)\\
				      & \leq & -\int_{M_\sigma} (\psi \Delta\varphi_\sigma) - \int_{M \setminus M_\sigma} (\psi \Delta\varphi_\sigma)\\
				      & \leq & \frac{n-2}{4(n-1)}\int_M \left(-\scal~\varphi_\sigma + \hscal~\varphi_\sigma^{\kappa+1}\right) \psi
\end{eqnarray*}

\noindent o\`u on a not\'e, pour $\sigma' \in~(0; 2\sigma)$, $N_{\sigma'}$ la normale unitaire \`a l'hypersurface $\left\{ \rho = \sigma' \right\}$ pointant dans la direction $\rho$ croissant : $d\rho\left(N_{\sigma'}\right) > 0$. Ainsi $\varphi_\sigma$ est une sous-solution au sens des distributions. Il existe donc une solution au probl\`eme de Dirichlet \ref{EqPrescScalDiri}. Il faut maintenant v\'erifier que $\varphi \to 1$ sur $\binf$. Or il existe une suite $p_i \in M$ telle que $\lim_{i\to \infty} \varphi(p_i) = \lim\sup_{\binf} \varphi$, $\lim\sup_{i\to \infty} \Delta\varphi(p_i) \leq 0$. On en d\'eduit, en regardant l'\'equation \eqref{presc_scal} en $p_i$ et en passant \`a la limite :

$$\lim_{i \to \infty} \scal~\varphi(p_i) \leq \lim_{i \to \infty} \hscal~\varphi(p_i)^{\kappa+1}$$

\noindent donc :

$$-n(n-1)\underset{\binf}{\mathrm{lim~sup~}} \varphi \leq -n(n-1)\underset{\binf}{\mathrm{lim~sup~}} \varphi^{\kappa+1},$$

\noindent ainsi $0 \leq \lim\sup_{\binf} \varphi \leq 1$. De la m\^eme fa\c con, on peut trouver une suite $p'_i \in M$ telle que $\lim_{i\to \infty} \varphi(p'_i) = \lim\inf_{\binf} \varphi$, $\lim\inf_{i\to \infty} \Delta\varphi(p'_i) \geq 0$. On trouve alors $\lim\inf_{\binf} \varphi = 0$ ou $\lim\inf_{\binf} \varphi \geq 1$. Cependant le premier cas est impossible car
$\varphi \geq \varphi_\sigma$ et $\lim_{\binf} \varphi_\sigma = \sigma > 0$. Ce qui montre $\varphi \to 1$ au niveau de $\binf$. Par le principe du maximum de Hopf (voir par exemple \cite[Th\'eor\`eme 3.5]{GilbargTrudinger}), $\varphi > 0$. Ceci permet de montrer que $\varphi \geq \epsilon > 0$ pour $\epsilon$ assez petit, ce qui est l'estimation a priori n\'ecessaire pour montrer la r\'egularit\'e de $\varphi$ : $\varphi \in \mathcal{C}^{k, \alpha}_0$. Suivant \cite{DelayCourbScal}, on peut maintenant poser $\varphi = e^\theta$. $\theta$ v\'erifie l'\'equation :

$$-\frac{4(n-1)}{n-2} \left(\Delta \theta + |\nabla \theta |^2_{g} \right) + \scal = \hscal~e^{\kappa \theta}.$$

Supposons maintenant qu'on a deux solutions $\theta_1,~\theta_2$ de cette \'equation telles que $\theta_1 = \theta_2 = 0$ sur $\binf$, $\theta_1 = \theta_2$ sur $\bint$, on a alors en soustrayant :

$$
-\frac{4(n-1)}{n-2} \left(\Delta \left(\theta_1-\theta_2\right) + \left\langle \nabla (\theta_1+\theta_2), \nabla (\theta_1-\theta_2)\right\rangle_g \right) = \hscal \left(e^{\kappa \theta_1}-e^{\kappa \theta_2}\right).
$$

Or :

\begin{eqnarray*}
e^{\kappa \theta_1}-e^{\kappa \theta_2}
	& = & \kappa \int_{\theta_2}^{\theta_1} e^{\kappa \theta} d\theta\\
	& = & \left(\theta_1-\theta_2\right) \kappa \underbrace{\int_{0}^{1} e^{\kappa \theta_x} dx}_{\geq 0}
		    \quad\text{en posant $\theta_x = (1-x) \theta_2 + x \theta_1$}.
\end{eqnarray*}

On conclut alors par le principe du maximum classique que $\theta_2 = \theta_1$. La solution du probl\`eme de Dirichlet est donc unique.
\end{proof}

\subsubsection{Comportement au voisinage de $\binf$}

Nous souhaitons maintenant \'etudier le comportement de la solution $\varphi$ au voisinage de $\binf$. On a le th\'eor\`eme suivant :

\begin{theorem}[Comportement \`a l'infini des solutions de l'\'equation de prescription de la courbure scalaire]\label{comportementBinf} Soient $\delta \in (0, n)$ et $\scal, \hscal \in \mathcal{C}^{k, \alpha}_0$ tels que :
$$
\left\{
\begin{array}{l}
\scal, \hscal \to -n(n-1) \text{ au voisinage de $\binf$}\\
\scal - \hscal \in \mathcal{C}^{k-2, \alpha}_\delta
\end{array}
\right.
$$
alors la solution $\varphi$ de \ref{EqPrescScalDiri} est dans $1 + \mathcal{C}^{k, \alpha}_\delta$.
\end{theorem}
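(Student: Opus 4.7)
The plan is to set $u = \varphi - 1$; since $\varphi\in\mathcal{C}^{k,\alpha}_0$ and $\varphi\to 1$ at $\binf$, $u$ lies in $\mathcal{C}^{k,\alpha}_0$ and tends to $0$ at infinity, so it remains to upgrade the qualitative decay $u\to 0$ into the quantitative bound $u\in\mathcal{C}^{k,\alpha}_\delta$. Using the identity $(1+u)^{\kappa+1}-1 = (\kappa+1)\,u\int_0^1(1+tu)^\kappa\,dt$ and subtracting $\scal$ from both sides of \eqref{presc_scal}, I would rewrite the equation as a \emph{linear} problem for $u$,
\[
Lu := -\tfrac{4(n-1)}{n-2}\Delta u + V\,u \;=\; \hscal - \scal,
\]
with potential $V(p) = \scal(p) - (\kappa+1)\hscal(p)\int_0^1(1+tu(p))^\kappa\,dt$. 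Because $u$, $\scal+n(n-1)$ and $\hscal+n(n-1)$ all vanish at $\binf$, the potential $V$ converges to $\kappa\,n(n-1) = \tfrac{4n(n-1)}{n-2} > 0$ there; in particular one may fix $\sigma_0 > 0$ with $V\geq\tfrac{2n(n-1)}{n-2}$ throughout $M_{\sigma_0}$.

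Next I would exhibit $\rho^\delta$ as a barrier. The same computation as in the proof of Lemma \ref{pbLin} gives
\[
L(\rho^\delta) = \left(\tfrac{4(n-1)}{n-2}\bigl[\delta(n-1-\delta)|\nabbar\rho|^2_{\gbar} - \delta\rho\,\overline{\Delta}\rho\bigr] + V\right)\rho^\delta,
\]
and the bracketed factor tends to $\tfrac{4(n-1)}{n-2}[\delta(n-1-\delta)+n] = -\tfrac{4(n-1)}{n-2}(\delta-n)(\delta+1)$ as $\rho\to 0$. For $\delta\in[0,n)$ this limit is strictly positive, so after shrinking $\sigma_0$ if necessary, $L(\rho^\delta)\geq c_0\rho^\delta$ on $M_{\sigma_0}$ for some $c_0>0$. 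Choosing $A$ large enough that both $Ac_0\geq\|\rho^{-\delta}(\hscal-\scal)\|_{L^\infty(M_{\sigma_0})}$ (finite by hypothesis) and $A\sigma_0^\delta\geq\sup_{\{\rho=\sigma_0\}}|u|$, the comparison functions $w_\pm := A\rho^\delta\pm u$ satisfy $Lw_\pm\geq 0$ on $M_{\sigma_0}$, $w_\pm\geq 0$ on $\{\rho=\sigma_0\}$, and $w_\pm\to 0$ at $\binf$.

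I would then rule out a negative infimum of $w_\pm$ on $\overline{M_{\sigma_0}}$. Such an infimum cannot be attained on $\{\rho=\sigma_0\}$ nor in the limit at $\binf$, so Lemma \ref{limsup} applied to $-w_\pm$ would yield a sequence $p_i$ with $w_\pm(p_i)\to\inf w_\pm<0$ and $\liminf_i\Delta w_\pm(p_i)\geq 0$; the strict positivity $V\geq\tfrac{2n(n-1)}{n-2}$ then forces $\limsup_i Lw_\pm(p_i)\leq\tfrac{2n(n-1)}{n-2}\inf w_\pm<0$, contradicting $Lw_\pm\geq 0$. Hence $|u|\leq A\rho^\delta$ on $M_{\sigma_0}$, i.e.\ $u\in\mathcal{C}^{0,0}_\delta(M)$; the weighted Schauder estimates of \cite{LeeFredholm} on M\"obius balls, together with boundary Schauder estimates near $\bint$, then bootstrap this to $u\in\mathcal{C}^{k,\alpha}_\delta$.

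The principal obstacle is the barrier argument itself: because $M_{\sigma_0}$ is neither compact nor complete, one must carefully exclude a negative infimum of $w_\pm$ at three separate types of locations (interior critical point, finite boundary $\{\rho=\sigma_0\}$, and asymptotic boundary $\binf$), and the whole scheme hinges on the strict positivity of the linearised potential $V$ near $\binf$ — a direct consequence of the normalisation $\scal,\hscal\to-n(n-1)$ combined with $u\to 0$. Without this sign, the maximum-principle contradiction cannot be closed.
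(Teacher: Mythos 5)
Your argument is correct and reaches the same conclusion as the paper, but by a genuinely different and somewhat more economical route. The paper first builds explicit sub- and super-solutions $1 \pm K\rho^\delta$ of the \emph{nonlinear} equation \eqref{presc_scal} near $\binf$ (its Lemma \ref{compBord}); controlling the nonlinear term $\hscal\bigl[(1+u)^{\kappa+1}-1\bigr]$ there forces the convexity inequality \eqref{inegConv} and the technical smallness constraint $\frac{4(\delta+1)(n-\delta)}{n-2} > n A_{\kappa+1}(1-\lambda_\sigma)$, which has to be arranged by first shrinking $\sigma$ so that $\inf_{M_\sigma}\varphi$ is close to $1$. It then passes to the exact linearization $Lu = \hscal - \scal$ only at the bootstrap stage. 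You instead linearize at the outset around the \emph{known} solution $\varphi$ using the Taylor identity for $(1+u)^{\kappa+1}$, obtaining a genuine linear equation with potential $V$ that tends to $\kappa n(n-1)>0$ at $\binf$, and then run the $\rho^\delta$-barrier directly against this linear operator. This avoids the convexity inequality and the constraint on $\lambda_\sigma$ entirely, at the (mild) cost of the barrier constant $A$ depending on the solution $\varphi$ itself rather than on a priori data; that is perfectly legitimate for a regularity-at-infinity statement. One small wobble: your appeal to Lemma \ref{limsup} is not quite what is needed — that lemma produces a sequence realizing $\limsup_{\binf}$, not the global infimum of $w_\pm$ on $\overline{M_{\sigma_0}}$. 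The cleaner way to close the barrier argument is to note that $w_\pm$ extends continuously to the compact set $\overline{M_{\sigma_0}}$ (since $u\to 0$ at $\binf$), hence attains its infimum; if the infimum were negative it would occur at an interior point where $\Delta w_\pm\geq 0$, and then $L w_\pm \leq V w_\pm < 0$ by the strict positivity of $V$, contradicting $L w_\pm \geq 0$. With that fix, the remainder — $u\in\mathcal{C}^{0,0}_\delta$ followed by weighted interior Schauder estimates near $\binf$ and ordinary boundary Schauder estimates near $\bint$ — matches the paper's bootstrap and is correct.
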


La d\'emonstration de ce th\'eor\`eme repose sur le lemme suivant :

\begin{lemma}\label{compBord} Si $\sigma$ est assez petit,

\begin{itemize}
\item Si $\Lambda \in \bR$, $\Lambda > 1$, il existe une sur-solution $\varphi_+ \in 1 + \mathcal{C}^{k, \alpha}_\delta$ de \eqref{presc_scal} sur
$M_\sigma$ avec $\left.\varphi_+\right|_{S_\sigma} = \Lambda$.
\item Si $0 \leq \lambda < 1$, si on a l'in\'egalit\'e $\frac{4(\delta+1)(n-\delta)}{n-2} > n A_{\kappa+1} (1-\lambda)$ avec
$A_p = \max \left\{p, \frac{p(p-1)}{2}\right\}$, il existe une sous-solution $\varphi_- \in 1 + \mathcal{C}^{k, \alpha}_\delta$ avec $\left.\varphi_+\right|_{S_\sigma} = \lambda$.
\end{itemize}
\end{lemma}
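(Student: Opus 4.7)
Le plan est de poser
\[
\varphi_+ = 1 + A\rho^\delta, \quad A = \frac{\Lambda-1}{\sigma^\delta}, \qquad
\varphi_- = 1 - B\rho^\delta, \quad B = \frac{1-\lambda}{\sigma^\delta},
\]
qui appartiennent manifestement à $1 + \mathcal{C}^{k,\alpha}_\delta$ et réalisent les valeurs $\Lambda$ (resp.\ $\lambda$) sur $S_\sigma$. Il s'agit de vérifier $P(\varphi_+) \geq 0$ et $P(\varphi_-) \leq 0$ sur $M_\sigma$ pour $\sigma$ assez petit, où l'on pose $P(\varphi) = -\frac{4(n-1)}{n-2}\Delta\varphi + \scal\,\varphi - \hscal\,\varphi^{\kappa+1}$. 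Le calcul principal repose sur l'identité $-\Delta\rho^\delta = \rho^\delta[\delta(n-1-\delta)|\overline{\nabla}\rho|^2_{\gbar} - \delta\rho\overline{\Delta}\rho]$ établie dans la preuve du Lemme \ref{pbLin}, combinée aux approximations $\scal, \hscal \approx -n(n-1)$ et $|\overline{\nabla}\rho|^2_{\gbar} \approx 1$ près de $\binf$. À l'aide des identités $(\delta+1)(n-\delta) = n + \delta(n-1-\delta)$ et $n(n-1)\kappa = \tfrac{4n(n-1)}{n-2}$, le terme principal du développement de $P(1 \pm c\rho^\delta)$ vaut $\pm c\rho^\delta\cdot\tfrac{4(n-1)(\delta+1)(n-\delta)}{n-2}$, strictement du bon signe pour $\delta\in(0,n)$. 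Trois types d'erreurs apparaissent : correction géométrique en $O(c\rho^{\delta+1})$ issue de $|\overline{\nabla}\rho|^2_{\gbar} - 1$ et $\rho\overline{\Delta}\rho$ ; différence de courbure $(\scal - \hscal) \in \mathcal{C}^{k-2,\alpha}_\delta$ ; et reste non linéaire du développement de $\hscal(1\pm c\rho^\delta)^{\kappa+1}$ autour de 1.

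Le cas sur-solution est direct : le reste non linéaire est positif par convexité de $t \mapsto t^{\kappa+1}$ et renforce le terme principal, tandis que les erreurs géométriques et de courbure s'absorbent pour $\sigma$ petit puisque $A \sim \sigma^{-\delta}$ devient grand. L'obstacle principal se situe dans le cas sous-solution, où le reste non linéaire agit à présent en sens défavorable pour $y = B\rho^\delta$ pouvant atteindre $1 - \lambda$. La clé est la minoration
\[
(1-y) - (1-y)^{\kappa+1} \geq y\bigl[\kappa - A_{\kappa+1}(1-\lambda)\bigr], \qquad y \in [0, 1-\lambda],
\]
que je démontrerais en deux régimes. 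Pour $\kappa \geq 1$, la formule de Taylor donne $(1-y)^{\kappa+1} \leq 1 - (\kappa+1)y + \binom{\kappa+1}{2}y^2$ (car $(1-\theta y)^{\kappa-1} \leq 1$), d'où $(1-y) - (1-y)^{\kappa+1} \geq \kappa y - \binom{\kappa+1}{2}y^2 \geq y[\kappa - A_{\kappa+1}(1-\lambda)]$, puisque $\binom{\kappa+1}{2} \leq A_{\kappa+1}$ pour tout $\kappa \geq 1$. Pour $\kappa \leq 1$, la formule des accroissements finis donne $(1-y) - (1-y)^{\kappa+1} = y[(\kappa+1)\xi^\kappa - 1]$ pour un $\xi \in [\lambda, 1]$, et l'inégalité $\xi^\kappa \geq \xi \geq \lambda$ (valable pour $\kappa \leq 1$ et $\xi \in [0,1]$) fournit la borne avec $A_{\kappa+1} = \kappa+1$.

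En injectant cette minoration dans le calcul de $P(\varphi_-)$, on obtient à l'ordre dominant
\[
P(\varphi_-) \leq -B\rho^\delta(n-1)\left[\frac{4(\delta+1)(n-\delta)}{n-2} - nA_{\kappa+1}(1-\lambda)\right] + (\text{erreurs}),
\]
où le crochet est strictement positif par l'hypothèse du lemme. Les termes d'erreur s'absorbent comme précédemment pour $\sigma$ suffisamment petit, d'où $P(\varphi_-) \leq 0$. L'obstacle principal est donc la preuve de l'inégalité clé ci-dessus, dont la dichotomie $\kappa \lessgtr 1$ reflète exactement la définition $A_p = \max\{p, p(p-1)/2\}$.
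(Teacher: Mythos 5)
Your proposal is correct and follows the paper's own strategy almost exactly: same ansatz $\varphi_\pm = 1 \pm c\rho^\delta$, same computation of $\Delta\rho^\delta$, same use of convexity for the supersolution, and the same quadratic bound $(1+u)^{\kappa+1}-1-(\kappa+1)u \leq A_{\kappa+1}u^2$ (combined with $u^2 \leq (1-\lambda)|u|$) for the subsolution. The only deviation is cosmetic: you prove the key convexity inequality via Taylor with Lagrange remainder when $\kappa\geq 1$ and the mean-value theorem when $\kappa\leq 1$, whereas the paper integrates the tangent-line bound when $\kappa\geq 1$ and uses convexity in the parameter $\kappa$ when $\kappa\leq 1$ --- both are elementary and lead to the same constant $A_{\kappa+1}$.
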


Montrons tout d'abord comment ce lemme implique le th\'eor\`eme. Pour cela posons :

$$
\left\{
\begin{array}{l}
\Lambda_\sigma = \sup_{M_\sigma} \varphi\\
\lambda_\sigma = \inf_{M_\sigma} \varphi.\\
\end{array}
\right.
$$

Comme $\varphi = 1$ sur $\binf$, $\Lambda_\sigma, \lambda_\sigma \to 1$ quand $\sigma \to 0$. On peut donc supposer que $\lambda_\sigma$ v\'erifie l'in\'egalit\'e $\frac{4(\delta+1)(n-\delta)}{n-2} > n A_{\kappa+1} (1-\lambda_\sigma)$. Le lemme fournit alors des fonctions $\varphi_\pm$, d\'efinies sur $M_{\sigma'}$ avec $\sigma' \leq \sigma$, valant respectivement $\Lambda_\sigma$ et $\lambda_\sigma$ sur $M_{\sigma'}$ soient des sur et sous-solutions. En utilisant, comme pour l'unicit\'e de la solution (\`a la fin de la preuve du th\'eor\`eme \ref{PrescScalDiri}), les in\'equations v\'erifi\'ees par $\log \varphi$ et $\log \varphi_\pm$, on obtient $\varphi_- \leq \varphi \leq \varphi_+$ ce qui montre que $\varphi - 1\in \mathcal{C}^{0, 0}_\delta$. On r\'e\'ecrit alors l'\'equation \eqref{presc_scal} sous la forme :

\begin{equation}
-\frac{4(n-1)}{n-2} \Delta \left(\varphi-1\right) + \left(\scal - \hscal(\kappa+1) \int_0^1 (x \varphi + (1-x))^\kappa dx\right) \left(\varphi-1\right) = \hscal-\scal.
\end{equation}

On a alors $\left(\scal - \hscal(\kappa+1) \int_0^1 (x \varphi + (1-x))^\kappa dx\right) \in \mathcal{C}^{k-2,\alpha}_0$. Finalement, par r\'egularit\'e elliptique (voir \cite{LeeFredholm} lemme 4.8), on obtient $\varphi-1 \in \mathcal{C}^{k, \alpha}_\delta$.\\

\begin{proof}[D\'emonstration du lemme \ref{compBord}] Nous allons naturellement chercher des fonctions de la forme :

$$
\left\{
\begin{array}{l}
\varphi_+ = 1 + K \rho^\delta\\
\varphi_- = 1 - k \rho^\delta.
\end{array}
\right.
$$

Pour cela, calculons $\Delta \rho^\delta$ :

\begin{eqnarray*}
\Delta \rho^\delta & = & \rho^2 \left( \overline{\Delta} \rho^\delta - (n-2) \left\langle \frac{\overline{\nabla} \rho}{\rho}, \overline{\nabla} \rho^\delta \right\rangle_{\gbar} \right)\\
	      & = & \delta(\delta-n+1) \rho^\delta |\overline{\nabla}\rho|^2_{\gbar} + \delta \rho^{\delta+1} \overline{\Delta} \rho.
\end{eqnarray*}

$\varphi_+$ est une sur-solution si :

\begin{eqnarray*}
-\frac{4(n-1)}{n-2} \Delta \varphi_+ + \scal~\varphi_+ & \geq & \hscal~\varphi_+^{\kappa+1}\\
-\frac{4(n-1)}{n-2} \Delta u_+ + \scal~u_+ & \geq & \hscal \underbrace{\left(\left(1+u_+\right)^{\kappa+1}-1\right)}_{\text{convexe en $u_+$}} + \hscal-\scal
\end{eqnarray*}

\noindent o\`u on a pos\'e $u_+ = \varphi_+ - 1 = K \rho^\delta$. Ce sera le cas si :

\begin{eqnarray*}
-\frac{4(n-1)}{n-2} \Delta u_+ + \scal~u_+ & \geq & (\kappa+1) \hscal~u_+ + \hscal - \scal\\
-\frac{4(n-1)}{n-2} \Delta u_+ + \left(\scal - (\kappa+1) \hscal\right) u_+ & \geq & \hscal - \scal.\\
\end{eqnarray*}

Or :

\begin{eqnarray*}
&   & -\frac{4(n-1)}{n-2}\Delta u_+ +\left(\scal-(\kappa+1)\hscal\right)u_+\\
& = &\left[-\frac{4(n-1)}{n-2}\left(\delta(\delta-n+1)|\overline{\nabla}\rho|^2_{\gbar}+\delta \rho\overline{\Delta}\rho\right)
	+ \left(\scal - (\kappa+1) \hscal\right) \right] K \rho^\delta\\
& = & \left[-\frac{4(n-1)}{n-2} \delta(\delta-n+1) + \kappa n(n-1) + o(1)\right] K \rho^\delta\\
& = & \left[\frac{4(n-1)}{n-2} (\delta+1)(n-\delta) + o(1)\right] K \rho^\delta.
\end{eqnarray*}

Si on choisit $\sigma$ assez petit, on peut donc supposer que, sur $M_\sigma$ :

$$-\frac{4(n-1)}{n-2} \Delta u_+ + \left(\scal - (\kappa+1) \hscal\right) u_+ \geq \frac{2(n-1)}{n-2} (\delta+1)(n-\delta) K \rho^\delta.$$

Si $K$ est assez grand, on a alors :

$$-\frac{4(n-1)}{n-2} \Delta u_+ + \left(\scal - (\kappa+1) \hscal\right) u_+ \geq \hscal - \scal.$$

On veut ensuite que sur $S_\sigma$, $\varphi_+ = \Lambda$ ce qui conduit \`a choisir $K = \frac{\Lambda - 1}{\sigma^\delta}$ mais, quitte \`a diminuer $\sigma$ ou \`a augmenter $K$, on peut supposer que c'est le cas.\\

Pour trouver une sous-solution, il va nous falloir majorer $(1+u_-)^{\kappa+1}-1$ pour $u_- \in [-1, 0]$ (o\`u comme pr\'ec\'edemment, on a pos\'e $\varphi_- = 1+ u_-$ avec $u_- = - k \rho^\delta$). Admettons pour l'instant l'in\'egalit\'e suivante :

\begin{equation}
\label{inegConv}
(1+u_-)^{\kappa+1}-1-(\kappa+1)u_- \leq A_{\kappa+1} u_-^2.
\end{equation}

On a :

$$\hscal \left[ (1+u_-)^{\kappa+1} - 1 \right] \geq \hscal \left[ (\kappa+1) u_- + A_{\kappa+1} u_-^2\right].$$

$u_-$ \'etant minimal en $\rho = \sigma$, $u_- \geq \lambda-1$ et $u_-^2 \leq (\lambda - 1) u_-$ :

$$\hscal \left[ (1+u_-)^{\kappa+1} - 1 \right] \geq \hscal \left[ (\kappa+1) u_- - A_{\kappa+1} (1-\lambda) u_-\right].$$

Comme pr\'ec\'edemment, on aura une sous-solution si :

$$-\frac{4(n-1)}{n-2} \Delta u_- + \scal~u_-  \leq \hscal \left[ (\kappa+1) u_- - A_{\kappa+1} (1-\lambda) u_-\right] + \hscal - \scal.$$

Or :

\begin{eqnarray*}
&   & -\frac{4(n-1)}{n-2} \Delta u_- + \scal~u_- - \hscal \left[ (\kappa+1) u_- - A_{\kappa+1} (1-\lambda) u_-\right]\\
& = & -\left[\frac{4(n-1)}{n-2}(\delta+1)(n-\delta) - n(n-1) A_{\kappa+1} (1-\lambda) + o(1)\right] k \rho^\delta.
\end{eqnarray*}

Si $\sigma$ est assez petit, on peut supposer que :

$$\hscal - \scal \geq - \epsilon k \rho^\delta$$

pour un certain $\epsilon > 0$. Donc si $k$ est assez grand, $\varphi_-$ est une sous-solution et, comme pr\'ec\'edemment, on peut supposer que $\varphi_- = \sigma$ le long de $S_\sigma$.\\

D\'emontrons maintenant l'in\'egalit\'e \eqref{inegConv}. Pour cela dinstingons deux cas :

\begin{enumerate}
\item Si $\kappa \geq 1$, la fonction $x \mapsto (1+x)^\kappa-1$ est convexe donc elle est toujours au dessus de sa tangente en $x=0$, $(1+x)^\kappa-1 \geq \kappa x$. En
int\'egrant cette in\'egalit\'e entre $u_-$ et $0$, on obtient \eqref{inegConv}.
\item Si $0 \leq \kappa \leq 1$, $\forall~x \in~[-1;0]$ la fonction $\kappa \mapsto (1+x)^{\kappa+1}-1-(\kappa+1) x - (\kappa+1)x^2$ est convexe et n\'egative pour $\kappa = 0$
et $\kappa = 1$. On en d\'eduit donc l'in\'egalit\'e \eqref{inegConv} pour $0 \leq \kappa \leq 1$.
\end{enumerate}
\end{proof}

\subsection{Prescription de la courbure moyenne de $\partial_0 M$}

Sous une transformation conforme, la courbure moyenne se transforme de la mani\`ere suivante\footnote{La courbure extrins\`eque est d\'efinie par rapport \`a la normale $N$ pointant vers l'int\'erieur de M alors que $\nu$ est dirig\'e dans l'autre sens (convention E.D.P.) c'est ce qui explique les signes moins.} :

$$\frac{2}{n-1} \nabla_{\nu} \varphi - H \varphi = - \widehat{H} \varphi^{\frac{\kappa}{2}+1}.$$

On constate que la fonction $\varphi_-$ construite pr\'ec\'edemment (\'equation \ref{phiSigma}), reste une sous-solution pour ce probl\`eme au bord. On souhaite ensuite trouver une sur-solution. Pour cela, constatons qu'on peut toujours supposer qu'on a $H \leq 0$ sur $\partial_0 M$. En effet, il suffit d'effectuer une premi\`ere transformation conforme non triviale uniquement dans un voisinage du bord $\bint$. La fonction $\varphi_+ = \Lambda$ constante est donc une sur-solution si $\Lambda$ est assez grand. On a alors le th\'eor\`eme suivant :

\begin{theorem}[Solutions de l'\'equation de prescription de la courbure scalaire avec courbure moyenne du bord donn\'ee] \label{PrescScalH} Sous les hypoth\`eses du th\'eor\`eme \ref{PrescScalDiri}, si $\widehat{H} \in \mathcal{C}^{k-1, \alpha}\left(\bint\right),~\widehat{H} \geq 0$, il existe une unique solution $\varphi \in \mathcal{C}^{k, \alpha}_0$ telle que $\varphi > 0$ et $\varphi \to_{\binf} 1$ au probl\`eme de la prescription de la courbure scalaire avec courbure moyenne au bord donn\'ee :

\begin{equation}
\label{CourbMoy}
\left\{
\begin{array}{l}
-\frac{4(n-1)}{n-2} \Delta \varphi + \scal~\varphi = \hscal~\varphi^{\kappa+1}\\
\frac{2}{n-1} \nabla_\nu \varphi - H \varphi = - \widehat{H} \varphi^{\frac{\kappa}{2}+1} \text{ le long de }\partial_0 M.
\end{array}
\right.
\end{equation}

De plus si on a, pour un certain $\delta \in (0, n)$, $\scal-\hscal \in \mathcal{C}^{k-2, \alpha}_\delta$, alors
$\varphi \in 1+ \mathcal{C}^{k, \alpha}_\delta$.
\end{theorem}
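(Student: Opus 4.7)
The plan is to mirror the proof of Theorem \ref{PrescScalDiri}, with Proposition \ref{propMonotonie} applied in its nonlinear-boundary variant (case 2). Setting
$$F(p,\varphi)=\tfrac{n-2}{4(n-1)}\bigl(\hscal\,\varphi^{\kappa+1}-\scal\,\varphi\bigr),\qquad f(p,\varphi)=\tfrac{n-1}{2}\bigl(H\varphi-\widehat{H}\varphi^{\frac{\kappa}{2}+1}\bigr),$$
the problem \eqref{CourbMoy} becomes $-\Delta\varphi=F(p,\varphi)$ with $\partial_\nu\varphi=f(p,\varphi)$ on $\bint$. Exponents in $F$ are $1$ and $\kappa+1$, exponents in $f$ are $1$ and $\kappa/2+1=\frac{n}{n-2}$, all $\geq 1$, so Proposition \ref{propMonotonie} applies to non-strictly positive sub-solutions.

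First, following the hint already in the text, I perform a preliminary conformal change supported in a neighborhood of $\bint$ so as to reduce to the case $H\le 0$; the hypotheses on $(\scal,\hscal)$ are preserved because the modification is compactly supported away from $\binf$. Second, I take the super-solution $\varphi_+=\Lambda$ with $\Lambda$ large: the PDE super-solution inequality is handled exactly as in Theorem \ref{PrescScalDiri} (using $\hscal<0$), and on $\bint$ one has $\partial_\nu\Lambda=0\ge \tfrac{n-1}{2}(H\Lambda-\widehat{H}\Lambda^{\kappa/2+1})$ because $H\le 0$ and $\widehat{H}\ge 0$. Third, I take the sub-solution $\varphi_-=\varphi_\sigma$ of \eqref{phiSigma} for $\sigma$ small; the distributional PDE sub-solution inequality is already proved in Theorem \ref{PrescScalDiri}, and since $\varphi_\sigma\equiv 0$ in a neighborhood of $\bint$ one has $\partial_\nu\varphi_\sigma=0$ and $f(p,0)=0$, so the boundary sub-solution inequality holds trivially. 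Choosing $\Lambda\ge\sigma$ yields $\varphi_-\le\varphi_+$, and Proposition \ref{propMonotonie} produces $\varphi\in\mathcal{C}^{k,\alpha}_0$ with $\varphi_\sigma\le\varphi\le\Lambda$.

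The asymptotic statement $\varphi\to 1$ at $\binf$ follows from Lemma \ref{limsup} applied at $\binf$ exactly as at the end of the proof of Theorem \ref{PrescScalDiri}: one extracts sequences realizing $\limsup$ and $\liminf$ of $\varphi$ at $\binf$, passes to the limit in \eqref{presc_scal}, obtains $0\le\limsup\varphi\le 1$ and $\liminf\varphi\in\{0\}\cup[1,\infty)$; the case $\liminf=0$ is excluded by $\varphi\ge\varphi_\sigma$ with $\varphi_\sigma\to\sigma>0$ along $\binf$. Strict positivity $\varphi\ge\epsilon>0$ then comes from Hopf's principle and provides the a priori lower bound that upgrades the regularity to $\mathcal{C}^{k,\alpha}_0$. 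The refined decay $\varphi-1\in\mathcal{C}^{k,\alpha}_\delta$ when $\scal-\hscal\in\mathcal{C}^{k-2,\alpha}_\delta$ is local at $\binf$ and inherits verbatim from Theorem \ref{comportementBinf} and Lemma \ref{compBord}, since neither uses the inner boundary.

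The main novelty compared to the Dirichlet case is uniqueness, and this is where I expect the only real subtlety. Again writing $\varphi=e^\theta$, the PDE becomes $-\tfrac{4(n-1)}{n-2}(\Delta\theta+|\nabla\theta|^2_g)+\scal=\hscal\,e^{\kappa\theta}$, and the boundary condition becomes $\tfrac{2}{n-1}\nabla_\nu\theta-H=-\widehat{H}\,e^{\kappa\theta/2}$. For two solutions $\theta_1,\theta_2$, the difference $w=\theta_1-\theta_2$ satisfies a linear elliptic equation whose zero-order coefficient is $-\tfrac{(n-2)\hscal\kappa}{4(n-1)}\int_0^1 e^{\kappa\theta_x}\,dx\ge 0$ (since $\hscal<0$), together with an oblique boundary condition $\nabla_\nu w+\beta(p)w=0$ on $\bint$ with $\beta=\tfrac{(n-1)\widehat{H}\kappa}{4}\int_0^1 e^{\kappa\theta_x/2}\,dx\ge 0$ (since $\widehat{H}\ge 0$). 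A suitable generalization of Lemma \ref{MaxYau} taking the inner boundary into account, combined with Hopf's boundary-point lemma to rule out a positive interior-or-boundary maximum of $w$, yields $w\le 0$, and by symmetry $w\equiv 0$; hence $\varphi$ is unique.
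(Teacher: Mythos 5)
Your proof mirrors the paper's in every essential: the same super-solution (a large constant, after a compactly supported conformal change reducing to $H\le 0$), the same sub-solution $\varphi_\sigma$ from \eqref{phiSigma} (which costs nothing at $\bint$ since it vanishes there together with $f(p,0)$), the same asymptotic analysis at $\binf$ via Lemma \ref{limsup} and Lemma \ref{compBord}, and the same $\varphi=e^\theta$ device reducing uniqueness to a linear maximum principle with an oblique boundary condition whose zero-order coefficients have the right signs by $\hscal<0$ and $\widehat{H}\ge 0$. The one spot where you are terser than the paper and ought to be explicit is strict positivity on $\bint$ and the attendant regularity upgrade: the actual argument is that at a hypothetical zero $p\in\bint$, Hopf's boundary-point lemma forces $\nabla_\nu\varphi(p)<0$, whereas the boundary condition at $p$ forces $\frac{2}{n-1}\nabla_\nu\varphi(p)=H(p)\cdot 0-\widehat{H}(p)\cdot 0=0$, a contradiction --- and since $f(p,\varphi)=H\varphi-\widehat{H}\varphi^{1+\kappa/2}$ is only $\mathcal{C}^{1,\min(\alpha,\kappa/2)}$ near $\varphi=0$ when $n\ge 5$, the monotone scheme a priori delivers only that much H\"older regularity, so this positivity bound $\varphi\ge\epsilon>0$ is precisely what licenses passing to the claimed $\mathcal{C}^{k,\alpha}_0$.
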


\begin{proof}La d\'emonstration est similaire \`a celle faite pour le probl\`eme de Dirichlet. Nous n'indiquons donc que les diff\'erences entre les deux preuves. Posons $\gamma = \min \left\{\alpha, \frac{\kappa}{2}\right\}$, la fonction $f(p, \varphi) = H(p) \varphi - \widehat{H} \varphi^{1+\frac{\kappa}{2}}$ est de classe $\mathcal{C}^{1, \gamma}$ sur $\bint \times \bR_+^*$. La m\'ethode de monotonie fournit alors une solution $\varphi \in \mathcal{C}^{1, \gamma}_0(M)$, $\varphi \to_{\binf} 1$. On sait que $\varphi > 0$ sur $\mathring{M}$. Donc $\varphi$ peut \^etre nulle uniquement sur $\bint$. Cependant, en un tel point, on doit avoir $0 = H \varphi - \widehat{H} \varphi^{\frac{\kappa}{2}+1} = \frac{2}{n-2} \nabla_N \varphi < 0$ absurde. Ce qui permet de conclure que $\varphi - 1\in \mathcal{C}^{k, \alpha}_0(M)$. On peut donc comme pr\'ec\'edemment poser $\varphi = e^\theta$. Si $\varphi_1$ et $\varphi_2$ sont deux solutions de \ref{CourbMoy} alors $\theta_1 - \theta_2$ satisfait \`a :

$$
\left\{
\begin{array}{l}
-\frac{4(n-1)}{n-2} \left(\Delta \left(\theta_1-\theta_2\right) + \left\langle \nabla (\theta_1+\theta_2), \nabla (\theta_1-\theta_2)\right\rangle_g \right) = \left(\theta_1-\theta_2\right) \kappa \int_{0}^{1} e^{\kappa \theta_x} dx\\
\frac{2}{n-2} \nabla_\nu \left(\theta_1 - \theta_2\right) + \underbrace{\widehat{H} \frac{n-2}{2} \int_0^1 e^{\frac{\kappa}{2} ((1-x) \theta_1 + x \theta_2)} dx}_{\geq 0} \left(\theta_1 - \theta_2\right) = 0 \text{ le long de $\bint$}\\
\theta_1 - \theta_2 = 0 \text{ le long de $\binf$}.
\end{array}
\right.
$$

On a donc $\theta_1 = \theta_2$ (la d\'emonstration est analogue \`a celle du lemme \ref{pbLin}) et $\varphi$ est unique.
\end{proof}

\section{Construction de solutions des \'equations de contrainte avec des horizons apparents}
\subsection{La m\'ethode Choquet-Bruhat--Lichnerowicz--York}

L'objectif de ce paragraphe est, principalement, de fixer les notations. Nous renvoyons \`a \cite{BartnikIsenberg} pour plus de d\'etails.
Les donn\'ees initiales pour le probl\`eme de Cauchy en relativit\'e g\'en\'erale sont une vari\'et\'e riemanienne $(M, g)$, et un 2-tenseur sym\'etrique $K$ sur $M$ qui s'interpr\`ete comme la seconde forme fondamentale du plongement de $M$ dans $\mathcal{M}$, la solution des \'equations d'Einstein correspondant \`a ces donn\'ees initiales.  Par convention, on choisit ici 
 $$
 K(X,Y)=\langle \nabla_X T,Y\rangle,
 $$
o\`u $T$ est le vecteur unitaire futur, normal \`a $M$. $K$ est reli\'e, dans l'analyse hamiltonienne de la relativit\'e g\'en\'erale, au moment $\pi^{ij}$ conjugu\'e \`a la m\'etrique $g_{ij}$ par :

$$\pi^{ij} = \frac{\partial \mathcal{L}}{\partial\left(\partial g_{ij}\right)} = \sqrt{g}\left(K^{ij}-g^{ij}K\right).$$

Voir par exemple \cite{Carlip}. Ce syst\`eme est contraint et les donn\'ees initiales doivent v\'erifier les \'equations suivantes\footnote{Par la suite, nous ne consid\`ererons que les \'equations d'Einstein du vide.} :

\begin{eqnarray}
\label{hamiltonian} \scal_g - 2 \Lambda_c - \left|K\right|^2_g + \left(\mathrm{tr}_g K\right)^2 = 0 & \qquad\text{(Contrainte hamiltonienne)}\\
\label{momentum}    \mathrm{div}_g K - d\left(\mathrm{tr}_g K\right) = 0 & \qquad\text{(Contraintes moment)}
\end{eqnarray}

\noindent o\`u $\Lambda_c$ est la constante cosmologique et $\left(\mathrm{div}_g K\right)_j = \nabla^i K_{ij}$. D\'ecomposons $K$ en deux parties :
$K = \tau g + L$ o\`u $\tau = \frac{1}{n} \mathrm{tr}_g K$ et $L$ est un tenseur sym\'etrique sans trace. L'\'equation \ref{momentum} devient alors :

$$\mathrm{div}_g L - (n-1) d\tau = 0.$$

Si l'on suppose que $\tau$ est une constante, ce qui signifie qu'on impose \`a notre surface de Cauchy d'\^etre \`a courbure moyenne constante, on obtient donc que $\mathrm{div}_g L=0$. Cette condition est naturelle, voir par exemple \cite{Gerhardt}. $L$ est donc un TT-tenseur. Il existe une bijection simple entre les TT-tenseurs de deux m\'etriques conform\'ement \'equivalentes :

\begin{prop} Si $L$ est un TT-tenseur pour $g$ alors $\hat{L} = \varphi^{-2} L$ est un TT-tenseur pour $\hat{g} = \varphi^\kappa g$ o\`u $\kappa = \frac{4}{n-2}$.
\end{prop}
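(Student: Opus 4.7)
Plan de démonstration. La preuve se scinde en deux vérifications, la symétrie de $\hat{L}$ étant évidemment préservée par la multiplication par un scalaire. Je commencerais par la condition de trace nulle, qui est immédiate : puisque $\hat{g}^{ij} = \varphi^{-\kappa} g^{ij}$ et $\hat{L}_{ij} = \varphi^{-2} L_{ij}$, on a
$$\mathrm{tr}_{\hat{g}} \hat{L} = \hat{g}^{ij} \hat{L}_{ij} = \varphi^{-\kappa-2}\, g^{ij} L_{ij} = \varphi^{-\kappa-2}\, \mathrm{tr}_g L = 0.$$

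Le cœur de la preuve est la vérification de la transversalité $\hat{\nabla}^i \hat{L}_{ij} = 0$. Je poserais $\psi = \log\varphi$ et j'utiliserais la formule standard de changement conforme des symboles de Christoffel pour $\hat{g} = e^{\kappa\psi} g$ :
$$C^k_{ij} := \hat{\Gamma}^k_{ij} - \Gamma^k_{ij} = \frac{\kappa}{2}\left(\delta^k_i \nabla_j \psi + \delta^k_j \nabla_i \psi - g_{ij} \nabla^k \psi\right).$$
La décomposition $\hat{g}^{ik}\hat{\nabla}_i \hat{L}_{kj} = \hat{g}^{ik}\bigl(\nabla_i \hat{L}_{kj} - C^l_{ik} \hat{L}_{lj} - C^l_{ij} \hat{L}_{kl}\bigr)$ fournit alors trois contributions à analyser. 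Le terme $\hat{g}^{ik}\nabla_i \hat{L}_{kj}$ se réduit à $\varphi^{-\kappa-2}\bigl((\mathrm{div}_g L)_j - 2(\nabla\psi)^k L_{kj}\bigr)$, la divergence s'annulant par hypothèse. Le terme $\hat{g}^{ik} C^l_{ij} \hat{L}_{kl}$ disparaît pour des raisons purement algébriques : un des sous-termes engendre la trace de $L$ (nulle), et les deux autres se compensent par symétrie de $L$. Enfin, la contraction $\hat{g}^{ik} C^l_{ik}$ donne $-\tfrac{(n-2)\kappa}{2}\varphi^{-\kappa} \nabla^l \psi$.

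Le point clé, qui n'est pas vraiment une difficulté mais plutôt l'observation cruciale, est que le choix $\kappa = \frac{4}{n-2}$ fournit $\tfrac{(n-2)\kappa}{2}=2$ : le terme résultant, après multiplication par $\hat{L}_{lj} = \varphi^{-2} L_{lj}$, compense exactement le $-2(\nabla\psi)^k L_{kj}$ provenant de la dérivation de $\varphi^{-2}$. Il reste $\hat{\nabla}^i\hat{L}_{ij} = 0$, ce qui conclut. L'exposant $-2$ dans $\hat{L} = \varphi^{-2} L$ et la valeur $\kappa = \frac{4}{n-2}$ sont donc précisément ajustés pour que les contributions en $\nabla\psi$ s'éliminent, ce qui exprime la covariance conforme de l'opérateur divergence sur les 2-tenseurs symétriques sans trace.
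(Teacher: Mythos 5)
Votre démonstration est correcte : le calcul de $\hat{g}^{ik}\nabla_i\hat{L}_{kj}$, l'annulation algébrique du terme $\hat{g}^{ik}C^l_{ij}\hat{L}_{kl}$ (trace nulle plus symétrie de $L$), et la compensation exacte entre $-\hat{g}^{ik}C^l_{ik}\hat{L}_{lj} = \tfrac{(n-2)\kappa}{2}\varphi^{-\kappa-2}(\nabla^l\psi)L_{lj}$ et le terme $-2\varphi^{-\kappa-2}(\nabla^k\psi)L_{kj}$ issu de la dérivation de $\varphi^{-2}$ sont tous exacts, et c'est bien la valeur $\kappa=\tfrac{4}{n-2}$ qui force $\tfrac{(n-2)\kappa}{2}=2$. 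Le papier énonce cette proposition sans preuve (c'est un fait classique de la décomposition conforme de York) ; votre argument via les symboles $C^k_{ij}=\hat{\Gamma}^k_{ij}-\Gamma^k_{ij}$ est précisément la vérification standard qui y est sous-entendue.
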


Ceci sugg\`ere la construction suivante. On choisit $\tau$ une constante, $g$ une m\'etrique sur $M$ et $L$ un TT-tenseur pour $g$. On cherche ensuite une fonction $\varphi > 0$ telle que $\hat{g} = \varphi^\kappa g,~\hat{K} = \varphi^{-2} L + \tau \hat{g}$ soit une solution des \'equations de contraintes. L'\'equation \ref{hamiltonian} donne pour $\varphi$ l'\'equation de Lichnerowicz :

\begin{equation}
\label{eqLichnerowicz}
-\frac{4(n-1)}{n-2} \Delta \varphi + \scal~\varphi - |L|_g^2 \varphi^{-\kappa-3} + \left(n(n-1) \tau^2 - 2 \Lambda_c\right) \varphi^{\kappa+1} = 0.
\end{equation}

Nous allons nous int\'eresser au cas o\`u la constante cosmologique $\Lambda_c$ est n\'egative ou nulle et\linebreak \mbox{$\left(n(n-1) \tau^2 - 2 \Lambda_c\right) > 0$}. Par un changement d'echelle, on voit qu'on peut supposer $n(n-1) \tau^2 - 2 \Lambda_c = n(n-1)$. En particulier $\left|\tau\right| \leq 1$. Ce cas inclut, par exemple, les hypersurfaces asymptotiquement isotropes d'une vari\'et\'e asymptotiquement lorentzienne pour lesquelles $\Lambda_c=0$ et $\tau = \pm 1$ ($\tau = +1$, resp. $-1$, pour les hypersurfaces de Cauchy pour le d\'eveloppement en temps futur, resp. pass\'e) et le cas $\tau=0$, $2 \Lambda_c = -n(n-1)$, des hypersurfaces de Cauchy \`a courbure moyenne nulle dans un espace-temps asymptotiquement anti-de Sitter.

\subsection{La condition au bord d'horizon apparent}\label{secHorizonApparent}

Soit $(M, g)$ une vari\'et\'e asymptotiquement hyperbolique avec un bord interne (\'eventuellement vide). On pose :

$$\bint = \bigcup_i \sigma_i$$

\noindent o\`u les $\sigma_i$ repr\'esentent les composantes connexes de $\bint$. Pour chaque $\sigma_i$, on fixe $\epsilon_i = \pm 1$ qui correspond \`a la condition d'horizon apparent futur ($\epsilon_i = -1$) ou pass\'e ($\epsilon_i = +1$). Sous une transformation conforme, la trace de la seconde forme fondamentale $H_i$ de $\sigma_i$ devient :

\begin{equation}\label{transConfCourbMoy}
\widehat{H}_i = \varphi^{-\frac{\kappa}{2}} \left( H_i + 2 \frac{n-1}{n-2} \frac{\nabla_{N_i} \varphi}{\varphi} \right)
\end{equation}

\noindent o\`u $N_i$ est la normale sortante de l'horizon, i.e. pointant vers l'int\'erieur de $M$. La condition d'horizon apparent pour $\sigma_i$ s'\'ecrit (voir par exemple \cite{Wald} section 12.2):

\begin{eqnarray*}
\widehat{H}_i & = & \epsilon_i \widehat{\tr}_{\sigma_i} \widehat{K}\\
\varphi^{-\frac{\kappa}{2}}\left( H_i+2\frac{n-1}{n-2}\frac{\nabla_{N_i}\varphi}{\varphi}\right) & = & \epsilon_i \left[(n-1)\tau + \varphi^{-2}\widehat{\tr}_{\sigma_i} L\right]\\
\frac{2(n-1)}{n-2}\nabla_{N_i}\varphi + H_i \varphi & = & \epsilon_i \left[(n-1)\tau \varphi^{\frac{\kappa}{2}+1} - L\left(N_i, N_i\right)\varphi^{-1-\frac{\kappa}{2}}\right].
\end{eqnarray*}

Si on pose $\nu_i = - N_i$, on a donc :

\begin{equation}
\label{condBord}
\frac{2(n-1)}{n-2}\nabla_{\nu_i}\varphi - H_i \varphi = \epsilon_i \left[L_{\nu_i\nu_i}\varphi^{-1-\frac{\kappa}{2}} - (n-1)\tau~\varphi^{\frac{\kappa}{2}+1}\right].
\end{equation}

\section{R\'esolution de l'\'equation de Lichnerowicz}\label{secCMC}
Comme pour l'\'etude de la prescription de la courbure scalaire, nous fixons une vari\'et\'e asymptotiquement hyperbolique $(M, g)$ de classe $\mathcal{C}^{l, \beta}$ avec $l + \beta \geq 2$, une constante $\tau \in [-1;~1]$, des constantes $\epsilon_i = \pm 1$ associ\'ees aux composantes connexes du bord int\'erieur et un TT-tenseur $L$ sur $M$.\\

Nous allons montrer l'existence d'une solution \`a l'\'equation de Lichnerowicz \eqref{eqLichnerowicz} dans $\mathcal{C}^{k, \alpha}_\delta$ avec $2 \leq k+\alpha \leq l + \beta$, $0 < \alpha < 1$ et $\delta \in [0; n)$, satifsaisant au niveau du bord interne \`a la condition d'horizon apparent \eqref{condBord}, sous les hypoth\`eses :

\begin{enumerate}
\item $\scal + n(n-1) \in \mathcal{C}^{k-2, \alpha}_\delta$, $\scal + n(n-1) \to_{\binf} 0$,
\item $L \in \mathcal{C}_\frac{\delta}{2}^{k-2, \alpha}\left(M, T^{*2}M\right)$, $L \to_{\binf} 0$,
\item $L$ satisfait \`a $\epsilon_i L_{\nu_i \nu_i} \geq 0$ sur chacune des composantes connexes $\sigma_i$ du bord interne.
\end{enumerate}

Nous devrons de plus supposer que si $\epsilon_i \tau = -1$, le bord $\sigma_i$ a un invariant de Yamabe $\mathcal{Y}(\sigma_i)$ strictement positif. Ceci n'impose de restriction que lorsque $\Lambda_c = 0$. Ce cas appara\^it comme un cas critique pour ce probl\`eme et n'est pas soluble dans le cas g\'en\'eral (voir contre-exemple page \pageref{secCtrEx}). Nous renvoyons au th\'eor\`eme \ref{thmLichneHorizon} page \pageref{thmLichneHorizon} pour un \'enonc\'e plus pr\'ecis du r\'esultat. La construction est bas\'ee sur la m\'ethode de monotonie (Proposition \ref{propMonotonie}).\\

\subsubsection{Construction d'une sous-solution}

Afin de construire une sous-solution, commen\c cons par constater qu'on a une sous-solution naturelle \`a l'int\'erieur de $M$ (en oubliant les conditions au bord interne) donn\'ee par la solution de l'\'equation de Yamabe :

$$-\frac{4(n-1)}{n-2} \Delta \varphi_- + \scal~ \varphi_- + n(n-1) \varphi_-^{\kappa+1} = 0.$$

Montrons le lemme suivant :

\begin{lemma}\label{phiEpsilon} Soient $(M, g)$ une vari\'et\'e asymptotiquement hyperbolique \`a bord de classe au moins $\mathcal{C}^{2, \alpha}$, $0 < \alpha < 1$, $\scal, \hscal \in \mathcal{C}^{0, \alpha}_0$ deux fonctions telles que $\scal = \hscal \to -n(n-1)$ au voisinage de $\binf$ avec $\hscal < 0$. Si $\epsilon >0$ est une constante, on note $\varphi_\epsilon \in \mathcal{C}^{2, \alpha}_0$ la solution du probl\`eme de Dirichlet :

$$
\left\lbrace
\begin{array}{rcl}
-\frac{4 (n-1)}{n-2} \Delta \varphi + \scal~ \varphi & = & \hscal~ \varphi^{\kappa+1}\\
\varphi & = & \epsilon \quad\mathrm{sur}~\partial_0 M\\
\varphi & = & 1 \quad\mathrm{sur}~\binf.
\end{array}
\right.
$$

Alors il existe $\eta > 0$ tel que si $\epsilon > 0$ est assez petit, on a au niveau du bord int\'erieur :

$$\partial_\nu \varphi_\epsilon \leq -\eta.$$
\end{lemma}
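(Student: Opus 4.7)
The plan is to deduce the claim from the limiting problem with zero Dirichlet data on $\partial_0 M$. I would prove the lemma in the following steps.

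First, I would establish that the family $\{\varphi_\epsilon\}_{\epsilon > 0}$ is monotone in $\epsilon$: for $0 < \epsilon_1 < \epsilon_2$ one has $\varphi_{\epsilon_1} \leq \varphi_{\epsilon_2}$ everywhere. This follows exactly as in the uniqueness argument at the end of the proof of Theorem \ref{PrescScalDiri}: setting $\theta_j = \log \varphi_{\epsilon_j}$, the difference $w = \theta_1 - \theta_2$ satisfies a linear elliptic equation whose zero-order coefficient, essentially $-\hscal\, \kappa \int_0^1 e^{\kappa \theta_x}\,dx$, is non-negative since $\hscal < 0$; as $w \leq 0$ on $\bint$ and $w \to 0$ at $\binf$, the generalized maximum principle (Lemma \ref{MaxYau}) forces $w \leq 0$ on $M$. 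I then define $\varphi_0(p) := \lim_{\epsilon \to 0^+} \varphi_\epsilon(p)$; since the sub-solution $\varphi_\sigma$ of \eqref{phiSigma} is independent of $\epsilon$, the limit satisfies $\varphi_0 \geq \varphi_\sigma > 0$ on $M_\sigma$ and is in particular not identically zero.

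The second step is to show that $\varphi_\epsilon \to \varphi_0$ in $C^{2,\alpha'}$ up to $\bint$. Rewriting the equation as
\begin{equation*}
-\tfrac{4(n-1)}{n-2}\Delta \varphi_\epsilon + V_\epsilon\, \varphi_\epsilon = 0, \qquad V_\epsilon := \scal - \hscal\, \varphi_\epsilon^{\kappa},
\end{equation*}
and using the uniform bounds $0 < \varphi_\epsilon \leq \Lambda$ from Theorem \ref{PrescScalDiri}, the coefficient $V_\epsilon$ is uniformly bounded in $L^\infty$ on a closed neighborhood $\overline{U}$ of $\bint$ (which is compact and disjoint from $\binf$). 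A De Giorgi--Nash--Moser estimate then gives a uniform $C^{0,\gamma}(\overline{U})$ bound on $\varphi_\epsilon$, hence also on $V_\epsilon$; up-to-the-boundary Schauder estimates, applied with the constant Dirichlet data $\varphi_\epsilon|_{\bint} \equiv \epsilon$, then yield a uniform $C^{2,\alpha}(\overline{U})$ bound. Arzelà--Ascoli together with uniqueness of the pointwise limit give $\varphi_\epsilon \to \varphi_0$ in $C^{2,\alpha'}(\overline{U})$ for any $\alpha' < \alpha$, so that $\varphi_0$ solves the limiting problem with $\varphi_0|_{\bint} = 0$ and belongs to $C^{2,\alpha}(\overline{U})$.

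Finally, I would extract the conclusion by applying the strong maximum principle and the Hopf boundary point lemma to $\varphi_0$. Adding a large constant $K$ to absorb the sign of $V_0$, the operator $-\tfrac{4(n-1)}{n-2}\Delta + V_0 + K$ has non-negative zero-order coefficient and produces $(-\tfrac{4(n-1)}{n-2}\Delta + V_0 + K)\varphi_0 = K\varphi_0 \geq 0$; since $\varphi_0 \geq 0$ and $\varphi_0 \not\equiv 0$, the strong maximum principle yields $\varphi_0 > 0$ on $M \setminus \bint$, and Hopf's lemma (the interior ball condition holds because $\bint$ is a $\mathcal{C}^{l,\beta}$ hypersurface) gives $\partial_\nu \varphi_0(p) < 0$ at every $p \in \bint$. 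Compactness of $\bint$ and continuity of $\partial_\nu \varphi_0$ then produce $\eta > 0$ with $\partial_\nu \varphi_0 \leq -2\eta$ on $\bint$, and the $C^1$ convergence of the preceding step yields $\partial_\nu \varphi_\epsilon \leq -\eta$ on $\bint$ for all $\epsilon > 0$ sufficiently small. The main obstacle is the uniformity in $\epsilon$ of the $C^{2,\alpha}$ bound up to the boundary in the second step: the nonlinear coefficient $-\hscal\, \varphi_\epsilon^\kappa$ requires the initial Hölder estimate via De Giorgi--Nash--Moser before Schauder can be applied, and it is this uniform control — not the smoothness of any single $\varphi_\epsilon$, which is immediate from Theorem \ref{PrescScalDiri} — that permits the passage to the limit $\varphi_0$ and hence the final boundary comparison.
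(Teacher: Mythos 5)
Your proof is correct and follows essentially the same three-step strategy as the paper: monotonicity of $\varphi_\epsilon$ in $\epsilon$, uniform elliptic estimates yielding $C^{2}$ convergence $\varphi_\epsilon \to \varphi_0$ up to $\bint$, and finally the strong maximum principle with Hopf's boundary point lemma applied to $\varphi_0$. The only variation is technical: where you obtain the uniform $C^{2,\alpha}$ bound near $\bint$ via a De Giorgi--Nash--Moser estimate followed by a Schauder bootstrap, the paper instead gets it directly from the weighted Schauder estimate combined with the H\"older interpolation inequality of \cite[Lemme 6.35]{GilbargTrudinger} to absorb the nonlinear term, but both routes are legitimate and lead to the same conclusion.
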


\begin{proof} Montrons tout d'abord que si $\epsilon \leq \epsilon'$, $\varphi_\epsilon \leq \varphi_{\epsilon'}$. On a vu (th\'eor\`eme \ref{PrescScalDiri}) que $\varphi_\epsilon, \varphi_{\epsilon'} > 0$. On peut donc poser $\theta_\epsilon = \log \varphi_\epsilon$ (resp. $\theta_{\epsilon'} = \log \varphi_{\epsilon'}$). $\theta_\epsilon, \theta_{\epsilon'}$ v\'erifient alors l'\'equation :

$$-\frac{4(n-1)}{n-2} \left(\Delta \theta_{\epsilon(')} + |\nabla \theta_{\epsilon(')}|^2_g\right) + \scal - \hscal~ e^{\kappa \theta_{\epsilon(')}} = 0.$$

En soustrayant les deux \'equations, on obtient :

$$
 -\frac{4(n-1)}{n-2} \left(\Delta \left(\theta_\epsilon-\theta_{\epsilon'}\right)
+ \left\langle \nabla (\theta_\epsilon+\theta_{\epsilon'}),\nabla (\theta_\epsilon-\theta_{\epsilon'})\right\rangle_g \right)
- \hscal~ \kappa \int_{0}^{1} e^{\kappa \theta_x} dx \left(\theta_\epsilon-\theta_{\epsilon'}\right)=0
$$

\noindent o\`u on a pos\'e $\theta_x = (1-x) \theta_{\epsilon'} + x \theta_\epsilon$. Par le principe du maximum classique, on a alors $\theta_{\epsilon'} \geq \theta_\epsilon$.\\

Les fonctions $\varphi_\epsilon$, $\epsilon \leq \epsilon_0$ sont \'equicontinues sur les compacts de $M$. En effet, on a :

\begin{eqnarray*}
\|\varphi_\epsilon\|_{\mathcal{C}^{2, \alpha}_0(M)}
					& \leq & C \left(\left\| -\Delta\varphi_\epsilon + A\varphi_\epsilon\right\|_{\mathcal{C}^{0, \alpha}_0(M)}
								   + \sup_{M}\left| \varphi_\epsilon\right|\right)\\
					& \leq & C \left(\left\| (A-\scal) \varphi_\epsilon - n(n-1) \varphi_\epsilon^{\kappa+1}\right\|_{\mathcal{C}^{0, \alpha}_0(M)}
									 + \sup_{M}\left| \varphi_\epsilon\right|\right)\\
					& \leq & C'_{\epsilon_0}\left(\left\|\varphi_\epsilon\right\|_{\mathcal{C}^{0, \alpha}_0(M)}
									 + \sup_{M}\left| \varphi_\epsilon\right|\right)\\
					& \leq & C''_{\epsilon_0} (\sup_{M}\left| \varphi_\epsilon\right| + 1) \qquad\textrm{(in\'egalit\'e d'interpolation)}\\
					& \leq & C''_{\epsilon_0} (\sup_{M}\left| \varphi_{\epsilon_0}\right| + 1).
\end{eqnarray*}

Les fonctions $\varphi_\epsilon$ tendent donc uniform\'ement sur tout compact vers une fonction $\varphi_0$ continue. On voit ensuite facilement que $\varphi_0$ est dans $\mathcal{C}^{2,\alpha}_0$ avec $\varphi_0 \neq 0$, que $\varphi_\epsilon \to \varphi_0$ uniform\'ement sur $\Mbar$ (les fonctions $\varphi_\epsilon$ sont toujours plus grandes que la fonction $\varphi_-$ d\'efinie en \eqref{phiSigma} ce qui permet de montrer que $\varphi_0 \to_{\binf} 1$ et $\varphi_\epsilon$ converge simplement en d\'ecroissant vers $\varphi_0$) et que $\varphi_0$ est solution du probl\`eme de Dirichlet :

$$
\left\lbrace
\begin{array}{rcl}
-\frac{4 (n-1)}{n-2} \Delta \varphi + \scal~ \varphi & = & \hscal~ \varphi^{\kappa+1}\\
\varphi & = & 0 \quad\mathrm{sur}~\bint\\
\varphi & = & 1 \quad\mathrm{sur}~\binf.
\end{array}
\right.
$$

On a ensuite :

\begin{eqnarray*}
\| \varphi_\epsilon - \varphi_0 \|_{\mathcal{C}^{2,\alpha}_0}
	& \leq & C \left( \|(-\Delta + A) (\varphi_\epsilon - \varphi_0) \|_{\mathcal{C}^{0,\alpha}_0}
					 + \| \varphi_\epsilon - \varphi_0\|_{\mathcal{C}^{0,0}_0}\right)\\
	& \leq & C'  \|\varphi_\epsilon - \varphi_0 \|_{\mathcal{C}^{0,\alpha}_0}\\
	& \leq & C'' \|\varphi_\epsilon - \varphi_0 \|_{\mathcal{C}^{0,0}_0} \qquad\textrm{(in\'egalit\'e d'interpolation).}
\end{eqnarray*}

Posons $\eta = \frac{1}{2} \min_{\bint} \left| \partial_\nu \varphi_0 \right|$. Le principe du maximum de Hopf \cite{GilbargTrudinger} permet de montrer $\eta > 0$. Si $\epsilon$ est assez petit, l'in\'egalit\'e pr\'ec\'edente montre que \mbox{$\|\varphi_\epsilon - \varphi_0 \|_{\mathcal{C}^{2,\alpha}_0} \leq \frac{1}{2} \min_{\bint} \left| \partial_\nu \varphi_0 \right|$}, on a alors $\partial_\nu \varphi_\epsilon \leq \frac{1}{2} \partial_\nu \varphi_0 \leq -\eta$.
\end{proof}

Pour $\epsilon$ assez petit, on a donc sur $\bint$ :

\begin{eqnarray*}
\frac{2(n-1)}{n-2} \partial_\nu \varphi_\epsilon - H_i \varphi_\epsilon & \leq & -\frac{2(n-1)}{n-2} \eta - H_i \epsilon\\
									& \leq & -\epsilon_i (n-1) \tau  \epsilon^{\frac{\kappa}{2}+1}\\
									& \leq & \epsilon_i \left[L_{\nu_i\nu_i}\epsilon^{-1-\frac{\kappa}{2}}
									  - (n-1)\tau~\epsilon^{\frac{\kappa}{2}+1}\right]\\
									& \leq & \epsilon_i \left[L_{\nu_i\nu_i}\varphi_\epsilon^{-1-\frac{\kappa}{2}}
									  - (n-1)\tau~\varphi_\epsilon^{\frac{\kappa}{2}+1}\right].\\
\end{eqnarray*}

Ainsi $\varphi_\epsilon$ est une sous-solution pour $\epsilon$ assez petit.

\subsubsection{Construction d'une sur-solution}
Dans toute la suite, on pose $\partial_\delta M = \left\lbrace p \in M \vert~d_g(p, \bint) < \delta \right\rbrace$. On choisit $\delta_0$ assez petit tel que l'application :

$$
\begin{array}{cccl}
r : & \partial_{2\delta_0} M & \longrightarrow & \bR \\
    & p			     & \longmapsto     & d_g(p, \partial_0 M)
\end{array}
$$
soit non singuli\`ere. Les hypersurfaces $r=\text{cste}$ sont alors des sous-vari\'et\'es compactes de $M$ naturellement diff\'eomorphes \`a $\partial_0 M$. On note $H_r$ la trace de la courbure extrins\`eque de l'hypersurface $r=\text{cste}$. Rappelons que le laplacien se d\'ecompose en :

\begin{equation}
\label{decompLaplacienGeodesique}
\Delta f = \frac{\partial^2 f}{\partial r^2} + H_r \frac{\partial f}{\partial r} + \Delta_r f,
\end{equation}
o\`u $\Delta_r$ d\'esigne le laplacien associ\'e \`a la m\'etrique induite sur l'hypersurface $r=\text{cste}$.\\

Comme pour la prescription de la courbure scalaire, constatons que si $\Lambda$ est une constante assez grande alors $\varphi_+ = \Lambda$ est une sur-solution de \eqref{eqLichnerowicz}. Nous allons ensuite modifier $\varphi_+$ au voisinage de $\bint$ pour que $\varphi_+$ soit une sur-solution pour la condition au bord d'horizon apparent \eqref{condBord}. Concentrons-nous tout d'abord sur le cas $\epsilon_i \tau > -1$. On remarque tout d'abord que, quitte \`a modifier la m\'etrique par un facteur conforme non trivial au voisinage de $\bint$, on peut supposer que $H_r \geq 0$  et $\scal > 0$ dans un voisinage de $\binf$ (il suffit de choisir une fonction $\varphi = \varphi(r) > 0$ telle que $\frac{2(n-1)}{(n-2)} \varphi'(0) + H \varphi(0)>0$ et $\varphi''(0)<0$ tr\`es grand n\'egativement de sorte \`a avoir $\scal > 0$ sur $\bint$, les fonctions $\scal$ et $H_r$ \'etant continues, elles restent positives dans un voisinage de $\bint$).\\

Soit $f$ la solution de l'\'equation diff\'erentielle suivante :

\begin{equation}
\label{equaDiffLich}
\left\{
\begin{array}{rcl}
f''   & = & \frac{n(n-2)}{4} f^{\kappa+1}\\
f(0)  & = & \Lambda\\
f'(0) & = & 0.
\end{array}
\right.
\end{equation}

$f$ est d\'efinie sur $[0, \delta_\Lambda)$ pour un certain $\delta_\Lambda > 0$. En multipliant par $f'$ et en int\'egrant entre $0$ et $x \in [0, \delta_\Lambda)$
l'\'equation diff\'erentielle, on obtient l'\'equation \og conservation de l'\'energie \fg~ :

\begin{equation}
\label{consNrj}
f'(x)^2 = \frac{(n-2)^2}{4} \left( f(x)^{\kappa+2} - \Lambda^{\kappa+2}\right).
\end{equation}

Cette \'equation du premier ordre s'int\`egre en :

$$ \int_\Lambda^{f(x)} \frac{dy}{\sqrt{y^{\kappa+2} - \Lambda^{\kappa+2}}} = \frac{n-2}{2} x.$$

Finalement en posant $y = \Lambda z$ :

$$ \int_1^{\frac{f(x)}{\Lambda}} \frac{dz}{\sqrt{z^{\kappa+2} - 1}} = \frac{n-2}{2} x \Lambda^{\frac{\kappa}{2}}.$$

L'int\'egrale $I = \int_1^{\infty} \frac{dy}{\sqrt{z^{\kappa+2} - 1}}$ est convergente donc $f(x)$ n'est d\'efini que si $\frac{n-2}{2} x \Lambda^{\frac{\kappa}{2}} < I$, ce qui montre $\delta_\Lambda = \frac{2 I}{n-2} \Lambda^{-\frac{\kappa}{2}}$.\\

Quitte \`a prendre une valeur plus grande pour $\Lambda$, on peut supposer que $\delta_\Lambda < \delta_0$ et $\scal, H_r > 0$ sur $\partial_{\delta_\Lambda} M$. Soit $\delta \in (0;~\delta_\Lambda)$ \`a d\'eterminer plus tard. D\'efinissons :

\begin{equation}
\label{surSolLich}
\varphi_+ = \left\{
\begin{array}{ll}
\Lambda & \text{ sur $M \setminus \partial_\delta M$}\\
f(\delta-r) & \text{ sur $\partial_\delta M$}.\\
\end{array}
\right.
\end{equation}

On a vu que $\varphi_+$ est une sur-solution sur $M \setminus \partial_\delta M$. V\'erifions que $\varphi_+$ est une sur-solution sur $\partial_\delta M$. En utilisant la formule \eqref{decompLaplacienGeodesique}, on a :

\begin{eqnarray*}
	&      & -\frac{4(n-1)}{n-2} \Delta \varphi_+ + \scal~ \varphi_+ - |L|_g^2 \varphi_+^{-\kappa-3}+ n(n-1)\varphi_+^{\kappa+1} \\
	&   =  & -\frac{4(n-1)}{n-2} \left(\partial_r^2 f(\delta-r) + H_r \partial_r f(\delta-r)\right)+\scal~ f(\delta-r)\\
	&		&	 - |L|_g^2 f(\delta-r)^{-\kappa-3}+n(n-1)f(\delta-r)^{\kappa+1}\\
	&   =  & -\frac{4(n-1)}{n-2} \left(f''(\delta-r) - H_r f'(\delta-r)\right)+\scal~ f(\delta-r)\\
	&   &  - |L|_g^2 f(\delta-r)^{-\kappa-3}+n(n-1)f(\delta-r)^{\kappa+1}\\
 & \geq & -\frac{4(n-1)}{n-2} f''(\delta-r) + \scal~ f(\delta-r)-|L|_g^2 f(\delta-r)^{-\kappa-3}+n(n-1)f(\delta-r)^{\kappa+1}\\
 & \geq & \scal~ f(\delta-r)-|L|_g^2 f(\delta-r)^{-\kappa-3}\\
 & \geq & \scal~ \Lambda-|L|_g^2 \Lambda^{-\kappa-3},
\end{eqnarray*}
o\`u, pour obtenir la derni\`ere ligne, on a utilis\'e $\scal > 0$ et $f(x) \geq \Lambda$ pour $x \in [0; \delta_\Lambda)$. Donc, si $\Lambda$ est assez grand, on a que $\varphi_+$ est une sur-solution sur $\partial_\delta M$. Maintenant $\varphi_+$ est de classe $\mathcal{C}^1$ et ses d\'eriv\'ees partielles sont lipschitziennes. Un calcul analogue \`a celui fait dans la preuve du th\'eor\`eme \ref{EqPrescScalDiri} montre alors que $\varphi_+$ est une sur-solution au sens des distributions de \eqref{eqLichnerowicz}. Il reste \`a voir que, pour les conditions au bord \eqref{condBord}, $\varphi_+$ est une sur-solution (pour $\delta$ bien choisi) :

$$\frac{2(n-1)}{n-2}\partial_\nu\varphi_+ -H_i \varphi_+ \geq \epsilon_i \left[ L_{\nu_i\nu_i}\varphi_+^{-1-\frac{\kappa}{2}}-(n-1)\tau\varphi_+^{1+\frac{\kappa}{2}}\right],$$

\noindent c'est-\`a-dire :

$$\frac{2(n-1)}{n-2}f'(\delta) - H_i f(\delta) \geq \epsilon_i \left[ L_{\nu_i\nu_i}f(\delta)^{-1-\frac{\kappa}{2}}-(n-1)\tau f(\delta)^{1+\frac{\kappa}{2}}\right].$$

Or, en utilisant l'\'equation \eqref{consNrj}, ceci revient \`a montrer :

$$(n-1)\sqrt{f(\delta)^{\kappa+2}-\Lambda^{\kappa+2}}-H_i f(\delta)\geq\epsilon_i\left[L_{\nu_i\nu_i}f(\delta)^{-1-\frac{\kappa}{2}}-(n-1)\tau f(\delta)^{1+\frac{\kappa}{2}}\right].$$

Si on choisit $\delta$ proche de $\delta_\Lambda$, on peut obtenir des valeurs aussi grandes qu'on le souhaite pour $f(\delta)$ :

\begin{eqnarray*}
 &   & (n-1)\sqrt{f(\delta)^{\kappa+2}-\Lambda^{\kappa+2}}+(n-1)\epsilon_i \tau f(\delta)^{1+\frac{\kappa}{2}}-H_i f(\delta)-\epsilon_i L_{\nu_i\nu_i}f(\delta)^{-1-\frac{\kappa}{2}}\\
 & = & (n-1) \left(1 + \epsilon_i \tau\right) f(\delta)^{1+\frac{\kappa}{2}} + O\left(f(\delta)\right).
\end{eqnarray*}

Par hypoth\`ese $1 + \epsilon_i \tau > 0$ donc en choisissant $\delta$ assez proche de $\delta_\Lambda$, on peut supposer que :

$$
(n-1)\sqrt{f(\delta)^{\kappa+2}-\Lambda^{\kappa+2}}+(n-1)\epsilon_i \tau f(\delta)^{1+\frac{\kappa}{2}}-H_i f(\delta)
-\epsilon_i L_{\nu_i\nu_i}f(\delta)^{-1-\frac{\kappa}{2}}\geq 0.$$

$\varphi_+$ est alors une sur-solution.\\

Traitons maintenant le cas d'un bord pour lequel $\epsilon_i \tau = -1$. Nous devrons de plus supposer que $\mathcal{Y}(\sigma_i)>0$. Nous allons proc\'eder exactement comme dans le cas $\epsilon_i \tau > -1$, c'est-\`a-dire en modifiant la sur-solution dans un voisinage du bord $\sigma_i$. Nous n'indiquerons donc que les diff\'erences. Remarquons tout d'abord qu'on a le lemme suivant :

\begin{lemma}
Il existe une m\'etrique $g'$ conforme \`a $g$ telle que $g' = g$ en dehors d'un voisinage relativement compact de $\sigma_i$, et telle que 
$$\scal'_{\sigma_i} \text{ est une constante strictement positive et } H'_i(r'=0) = 0, \partial_{r'} H'_i(r'=0) = 0,$$ o\`u $\scal'_{\sigma_i}$ est la courbure scalaire de $\sigma_i$ muni de la m\'etrique induite par $g'$, $r'$ est la fonction distance \`a $\sigma_i$ pour $g'$ et $H'_i$ est la courbure moyenne des hypersurfaces de niveau $\{r' = cste\}$.
\end{lemma}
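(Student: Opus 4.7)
Le plan est de chercher $g'$ sous la forme conforme $g' = \varphi^\kappa g$, o\`u $\varphi > 0$ vaut identiquement $1$ en dehors d'un voisinage compact de $\sigma_i$. Les trois conditions requises correspondront pr\'ecis\'ement aux trois fonctions libres fournies par le 2-jet de $\varphi$ le long de $\sigma_i$ : la valeur $\varphi_0 := \varphi|_{\sigma_i}$, la d\'eriv\'ee normale premi\`ere $\varphi_1 := \partial_{N_i}\varphi|_{\sigma_i}$ et la d\'eriv\'ee normale seconde $\varphi_2 := \partial^2_{N_i}\varphi|_{\sigma_i}$. On fixera successivement ces trois fonctions pour satisfaire, respectivement, aux conditions sur $\scal'_{\sigma_i}$, $H'_i$ et $\partial_{r'}H'_i$, puis on recollera globalement.

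Tout d'abord, l'hypoth\`ese $\mathcal{Y}(\sigma_i) > 0$ permet, par r\'esolution du probl\`eme de Yamabe sur la vari\'et\'e riemannienne compacte $(\sigma_i, g|_{\sigma_i})$, de choisir $\varphi_0 > 0$ sur $\sigma_i$ de sorte que la m\'etrique induite $\varphi_0^\kappa g|_{\sigma_i}$ ait une courbure scalaire constante strictement positive. Ensuite, la formule de transformation conforme de la courbure moyenne \eqref{transConfCourbMoy} montre que le choix
\[
\varphi_1 := -\tfrac{n-2}{2(n-1)}\, H_i\, \varphi_0
\]
assure exactement $H'_i(r'=0) = 0$. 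Enfin, pour la derni\`ere condition, on utilise l'\'equation d'\'evolution classique de la courbure moyenne le long du flot g\'eod\'esique normal pour $g'$ :
\[
\partial_{r'} H'_i = -|A'|^2 - \ric'(N', N').
\]
Comme $H'_i = 0$ sur $\sigma_i$, on a $|A'|^2 = |\mathring{A}'|^2$, quantit\'e enti\`erement d\'etermin\'ee par le 1-jet de $\varphi$ d\'ej\`a fix\'e. En revanche, la formule conforme pour $\ric'$ fait intervenir un terme proportionnel \`a $\nabla^2 \log \varphi(N_i, N_i)$ ; un court calcul montre que le coefficient de $\varphi_2$ dans $\ric'(N', N')|_{\sigma_i}$ est non nul (essentiellement de l'ordre de $-(n-1)\kappa/(2\varphi_0)$, \`a une puissance de $\varphi_0$ pr\`es). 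On peut donc r\'esoudre lin\'eairement pour $\varphi_2$ afin d'imposer $\partial_{r'} H'_i(r'=0) = 0$.

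Pour recoller tout cela en une fonction globale sur $M$, on travaille dans des coordonn\'ees de Fermi $(r, y)$ pour $g$ au voisinage de $\sigma_i$ (o\`u $r = d_g(\cdot, \sigma_i)$ et $\partial_r = N_i$) et on pose
\[
\varphi(r, y) = 1 + \chi(r)\!\left( \varphi_0(y) - 1 + r\, \varphi_1(y) + \tfrac{r^2}{2}\, \varphi_2(y) \right),
\]
o\`u $\chi \in \mathcal{C}^\infty([0, R_0])$ est une fonction de troncature v\'erifiant $\chi(0) = 1$, $\chi'(0) = \chi''(0) = 0$ et $\chi \equiv 0$ pr\`es de $r = R_0$. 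Un calcul direct montre que $\varphi$ admet le 2-jet prescrit le long de $\sigma_i$ et vaut $1$ en dehors d'un voisinage compact de $\sigma_i$ ; pour $R_0$ suffisamment petit, $\varphi > 0$ partout sur $M$, et la m\'etrique $g' = \varphi^\kappa g$ convient. On notera que $\partial_{r'}|_{\sigma_i} = \varphi_0^{-\kappa/2}\, \partial_r|_{\sigma_i}$, de sorte que $\partial_{r'} H'_i|_{\sigma_i}$ et $\partial_r H'_i|_{\sigma_i}$ s'annulent simultan\'ement, ce qui justifie que les calculs men\'es dans les coordonn\'ees de Fermi de $g$ fournissent bien la condition voulue pour $g'$.

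La principale difficult\'e r\'eside dans la troisi\`eme \'etape : il faut expliciter suffisamment la formule de changement conforme pour le tenseur de Ricci pour conclure \`a la non-nullit\'e du coefficient de $\varphi_2$ dans $\partial_{r'} H'_i|_{\sigma_i}$. Il s'agit d'un calcul \'el\'ementaire mais d\'elicat de g\'eom\'etrie riemannienne, qu'il est pr\'ef\'erable de mener dans un rep\`ere de Fermi pour $g'$ afin d'exploiter la d\'ecomposition \eqref{decompLaplacienGeodesique} du laplacien.
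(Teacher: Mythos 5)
Votre preuve suit essentiellement la m\^eme route que celle du papier : r\'esolution du probl\`eme de Yamabe sur $\sigma_i$ (rendu possible par $\mathcal{Y}(\sigma_i)>0$) pour la courbure scalaire induite, transformation conforme de la courbure moyenne pour obtenir $H'_i=0$, puis \'equation de Mainardi (Riccati radiale) combin\'ee \`a la formule de transformation conforme du tenseur de Ricci pour imposer $\partial_{r'}H'_i=0$, la non-nullit\'e du coefficient de la d\'eriv\'ee normale seconde du facteur conforme \'etant le point clef. La seule diff\'erence est de pr\'esentation : vous prescrivez le 2-jet d'un unique facteur conforme $\varphi$ le long de $\sigma_i$ et recollez par troncature, l\`a o\`u le papier compose trois changements conformes successifs $u_1$, $u_2$, $u_3$, chacun n'affectant que la condition restante.
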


\begin{proof}
La construction de $g'$ se fait en trois \'etapes. Tout d'abord on construit un premier facteur conforme $u_1$ tel que $u_1 = 1$ sauf dans un voisinage de $\sigma_i$ tel que la m\'etrique $g_1 = u_1^\kappa$ restreinte \`a $\sigma_i$ a une courbure scalaire constante strictement positive. Puis \`a l'aide d'un second facteur conforme  $u_2$ valant $1$ au niveau de $\sigma_i$, $u_2 = 1$ en dehors d'un voisinage de $\sigma_i$ et dont la d\'eriv\'e normale en $\sigma_i$ est bien ajust\'ee, on contruit une m\'etrique $g_2 = u_2^\kappa g_1$ telle que la courbure moyenne de $\sigma_i$ est nulle (voir formule \eqref{transConfCourbMoy}). Finalement, imposer $\partial_{r'} H'(r'=0) = 0$ est plus d\'elicat car sous un changement conforme les hypersurfaces $\{r=cste > 0\}$ changent, ce qui change la d\'efinition de $\partial_r$. L'astuce est de remarquer que si on note $u_3$ le facteur conforme \`a d\'eterminer et qu'on pose $g' = u_3^{-2} g_2$, $r' = d_{g'}(\sigma_i, .)$... on a, en notant $S$ la seconde forme fondamentale du bord $\sigma_i$ (les quantit\'es $r'$, $H'$,... sont associ\'ees \`a la m\'etrique $g'$ alors que $N_2$, $\ric_2$,... sont associ\'ees \`a la m\'etrique $g_2$) :

\begin{eqnarray*}
\partial_{r'} H' + \left| S' \right|^2_{g'}
	& = & - \ric'(N', N') \qquad\text{(\'equation de Mainardi, voir par exemple \cite{Petersen})}\\
	& = & - u_3^2 \ric'(N_2, N_2)\\
	& = & - u_3^2 \left[\ric_{g_2}(N_2, N_2) + (n-2) \frac{\hess_{N_2, N_2}~u_3}{u_3}
			- \left((n-1) \left|\frac{d u_3}{u_3}\right|_{g_2}^2 - \frac{\Delta u_3}{u_3} \right)\right]\\
	& & \text{(transformation conforme du tenseur de Ricci, voir par exemple \cite{Besse})}.
\end{eqnarray*}

Choisissant $u_3$ tel que $u_3(r_2=0) = 1, \partial_{r_2} u_3 (r_2=0) = 0$, on a, en $r = 0$ :
$$\partial_{r'} H' + \left| S \right|^2_g = -\ric_2(N_2, N_2) - (n-1) \partial_{r_2}^2 u_3,$$
ce qui permet de d\'eterminer $\partial_{r_2}^2 u_3$ de mani\`ere \`a avoir $\partial_{r'} H' = 0$ et, comme pour les \'etapes pr\'ec\'edentes, on peut choisir $u_3$ non trivial dans un voisinage de $\sigma_i$.
\end{proof}

Nous allons donc supposer par la suite que $g$ est telle que $\scal_{\sigma_i} = cste > 0$, $H_i (r=0) = 0$ et $\partial_r H_i (r=0) = 0$. R\'e\'ecrivons ensuite l'\'equation \eqref{eqLichnerowicz} sous la forme :

\begin{equation}
\begin{array}{c}
-\frac{4(n-1)}{n-2} \left(\partial_r^2 \varphi + H \partial_r \varphi + \Delta_r \varphi \right) + \left(\scal_r + \left| S \right|^2 - 2 \partial_r H - H^2 \right) \varphi\\
+ n(n-1) \varphi^{\kappa+1} = |L|_g^2 \varphi^{-\kappa-3},
\end{array}
\end{equation}
o\`u $\scal_r$ d\'esigne le scalaire de courbure de (la composante connexe associ\'ee \`a $\sigma_i$ de) l'hypersurface $r = cste$, $S$ sa seconde forme fondamentale et $H$ sa courbure moyenne. Comme pr\'ec\'edemment, soit $h$ la solution de l'\'equation diff\'erentielle suivante :

\begin{equation}
\label{equaDiffLich2}
\left\lbrace
\begin{array}{rcl}
h''   & = & \frac{n(n-2)}{4} h^{\kappa+1} + A h\\
h(0)  & = & \Lambda\\
h'(0) & = & 0,
\end{array}
\right.
\end{equation}
o\`u $A > 0$ est une constante \`a pr\'eciser. L'\'equation de conservation de l'\'energie devient alors :

\begin{equation}
\label{consNrj2}
h'(x)^2 = \frac{(n-2)^2}{4} \left( h(x)^{\kappa+2} - \Lambda^{\kappa+2}\right) + A \left( h(x)^2 - \Lambda^2\right).
\end{equation}

On voit clairement que $h \geq f$ donc $h_\Lambda$ diverge vers $+\infty$ en un temps fini $\mu_\Lambda$ avec $0 < \mu_\Lambda < \delta_\Lambda$. Choisissant $A$ tel que $0 < A < \frac{n-2}{4(n-1)} \scal_{\sigma_i}$, il est alors ais\'e de modifier l'argument pr\'ec\'edent pour en d\'eduire qu'en posant $\varphi_+ = h(\delta - r)$ dans un voisinage $\{ r \leq \delta \}$ de $\sigma_i$ pour $\delta$ assez petit et quitte \`a choisir $\Lambda$ assez grand, on obtient une sur-solution. Seule pose probl\`eme l'estimation du terme $H \partial_r \varphi$. Or, en utilisant l'\'equation \eqref{consNrj2}, on voit qu'il existe une constante $B > 0$ ind\'ependante de $\Lambda > 1$ telle que $h'(r)^2 \leq B h^{\kappa+2}(r)$ pour tout $0 \leq r < \mu_\Lambda$. On en d\'eduit que $$h(r) \leq \left(\frac{2 B}{\kappa} (\mu_\Lambda - r)\right)^{-\frac{2}{\kappa}}.$$ En d\'ecomposant le $h^{\kappa+2}$ qui appara\^it dans l'\'equation \eqref{consNrj2} en $h^{\kappa} h^2$, on obtient : $$h^{\kappa+2}(r) \leq \left(\frac{2 B}{\kappa} (\mu_\Lambda - r)\right)^{-2} h^2(r),$$ ce qui prouve qu'il existe une constante $C > 0$ ne d\'ependant que de $B$ telle que $$h'(r) \leq \frac{C}{\mu_\Lambda - r} h(r).$$ Combin\'e au fait que $H(0) = \partial_r H(0) = 0$, donc que $H(r) = o(r)$, ceci permet de prouver que le terme $H \partial_r \varphi$ est domin\'e par le terme lin\'eaire $\left(\scal_r + \left| S \right|^2 - 2 \partial_r H - H^2 - \frac{4(n-1)}{n-2} A\right) \varphi$ pour $\delta$ assez petit.\\

En utilisant la m\'ethode de monotonie (section \ref{secMonotonie}), on a alors le th\'eor\`eme suivant :

\begin{theorem}[Construction de solutions de l'\'equation de Lichnerowicz contenant des horizons apparents]\label{thmLichneHorizon} ] Soient $M$ une vari\'et\'e asymptotiquement hyperbolique de classe $\mathcal{C}^{l, \beta}$ avec $l+\beta \geq 2$ et $\tau \in [-1; 1]$. Fixons, pour chaque composante connexe $\sigma_i$ de $\bint$, un r\'eel $\epsilon_i = \pm 1$ ($\epsilon_i = -1$ pour un horizon apparent futur et $\epsilon_i = +1$ pour un horizon apparent pass\'e). Supposons de plus, lorsque $\tau = \pm 1$, que, pour tout $i$ tel que $\epsilon_i = -\tau$, l'invariant de Yamabe $\mathcal{Y}(\sigma_i) > 0$. Soit $L \in \mathcal{C}^{k-1, \alpha}_0(M, T^{*2}M)$ un 2-tenseur sym\'etrique de trace nulle tel que $|L|^2_g \to 0$ au voisinage de $\binf$ et tel que $\epsilon_i L_{\nu_i \nu_i} \geq 0$ sur $\bint$. Il existe une solution $\varphi > 0$ au probl\`eme :

\begin{equation}
\label{EqLichHorApp}
\left\lbrace
\begin{aligned}
-\frac{4 (n-1)}{n-2} \Delta \varphi + \scal~ \varphi + n(n-1) \varphi^{\kappa+1} - |L|_g^2 \varphi^{-\kappa-3} = 0 & \quad\text{sur $\mathring{M}$}\\
\frac{2(n-1)}{n-2}\nabla_{\nu_i}\varphi - H_i \varphi = \epsilon_i \left[L_{\nu_i\nu_i}\varphi^{-1-\frac{\alpha}{2}} - (n-1)\tau~\varphi^{\frac{\alpha}{2}+1}\right] & \quad\text{sur $\sigma_i$ pour tout $i$},
\end{aligned}
\right.
\end{equation}
o\`u $\scal$ est le scalaire de courbure de $(M, g)$ et $\nu_i$ la normale \`a $\sigma_i$ sortante (i.e. dirig\'ee vers l'int\'erieur de l'horizon apparent) avec $\varphi \to 1$ au voisinage de $\binf$. De plus si $\scal + n(n-1) \in \mathcal{C}^{k-2, \alpha}_\delta$ et $|L|^2 \in \mathcal{C}^{k-2, \alpha}_\delta$ avec $\delta \in [0; n)$, alors $\varphi-1\in \mathcal{C}^{k, \alpha}_\delta$.
\end{theorem}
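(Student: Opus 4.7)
Mon plan est d'appliquer la m\'ethode de monotonie (Proposition \ref{propMonotonie}) avec, comme sous-solution, la fonction $\varphi_\epsilon$ du Lemme \ref{phiEpsilon} pour $\epsilon$ assez petit, et comme sur-solution la fonction $\varphi_+$ construite ci-dessus (\eqref{surSolLich} dans le cas $\epsilon_i \tau > -1$, et sa variante utilisant $h$ solution de \eqref{equaDiffLich2} lorsque $\epsilon_i \tau = -1$). L'estimation a priori $\varphi \geq \eta_0 > 0$ requise par la Proposition \ref{propMonotonie} s'obtient imm\'ediatement en remarquant que $\varphi_\epsilon > 0$ partout sur $\Mbar$, donc $\inf_M \varphi_\epsilon > 0$ ; toute solution $\varphi$ minor\'ee par $\varphi_\epsilon$ est ainsi uniform\'ement positive. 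La proposition fournit alors une solution $\varphi \in \mathcal{C}^{k, \alpha}_0$ satisfaisant $\varphi_\epsilon \leq \varphi \leq \varphi_+$.

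Je montrerai ensuite que $\varphi \to 1$ au voisinage de $\binf$ par un argument de type principe du maximum \`a l'infini, comme dans la preuve du Th\'eor\`eme \ref{PrescScalDiri}. Le Lemme \ref{limsup} appliqu\'e \`a $\varphi$ fournit une suite $(p_i)$ telle que $\varphi(p_i) \to \ell = \limsup_{\binf} \varphi$ et $\limsup \Delta \varphi(p_i) \leq 0$. En passant \`a la limite dans \eqref{eqLichnerowicz} et en utilisant $\scal \to -n(n-1)$ ainsi que $|L|^2 \to 0$ au voisinage de $\binf$, j'obtiendrai $-n(n-1) \ell + n(n-1) \ell^{\kappa+1} \leq 0$, donc $\ell \leq 1$. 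L'argument sym\'etrique pour le liminf donne $\liminf_{\binf} \varphi \geq 1$, l'alternative $\liminf = 0$ \'etant exclue par la minoration $\varphi \geq \varphi_\epsilon$ dont le liminf est strictement positif.

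Pour la r\'egularit\'e asymptotique $\varphi - 1 \in \mathcal{C}^{k, \alpha}_\delta$ sous les hypoth\`eses $\scal + n(n-1) \in \mathcal{C}^{k-2, \alpha}_\delta$ et $|L|^2 \in \mathcal{C}^{k-2, \alpha}_\delta$, je suivrai la d\'emarche du Th\'eor\`eme \ref{comportementBinf} : construire des barri\`eres locales $\varphi_\pm = 1 \pm K \rho^\delta$ au voisinage de $\binf$ en exploitant le calcul explicite de $\Delta \rho^\delta$, obtenir l'encadrement $\varphi - 1 \in \mathcal{C}^{0, 0}_\delta$ par comparaison via les in\'equations satisfaites par $\log \varphi$, puis lin\'eariser \eqref{eqLichnerowicz} autour de $\varphi = 1$ sous la forme
$$-\frac{4(n-1)}{n-2} \Delta (\varphi - 1) + A(\varphi) (\varphi - 1) = B(\varphi),$$
o\`u le coefficient $A(\varphi) \in \mathcal{C}^{k-2,\alpha}_0$ tend vers une constante strictement positive \`a l'infini et $B(\varphi) \in \mathcal{C}^{k-2,\alpha}_\delta$ regroupe les termes provenant de $\scal + n(n-1)$ et de $|L|^2 \varphi^{-\kappa-3}$. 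La r\'egularit\'e elliptique \cite{LeeFredholm} permet alors de conclure.

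Le principal obstacle de toute l'argumentation aura \'et\'e la construction de la sur-solution $\varphi_+$ dans le cas critique $\epsilon_i \tau = -1$ (d\'ej\`a trait\'e ci-dessus), qui repose crucialement sur l'hypoth\`ese $\mathcal{Y}(\sigma_i) > 0$ et sur un choix fin de la constante $A$ dans \eqref{equaDiffLich2} afin d'exploiter l'estim\'ee $h'(r) \lesssim (\mu_\Lambda - r)^{-1} h(r)$ et de dominer le terme $H \partial_r \varphi$ par le terme lin\'eaire issu de la courbure scalaire intrins\`eque du bord. Une fois ces barri\`eres construites, l'application de la m\'ethode de monotonie et les arguments asymptotiques ci-dessus sont relativement standards.
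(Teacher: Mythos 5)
Your proposal is correct and follows essentially the same route as the paper. The paper in fact does not isolate a separate proof block for this theorem: its "proof" is precisely the preceding construction of the sub-solution $\varphi_\epsilon$ (Lemme~\ref{phiEpsilon}), the super-solution $\varphi_+$ via the ODE barriers \eqref{equaDiffLich} and \eqref{equaDiffLich2} (the latter for the critical case $\epsilon_i\tau=-1$ under $\mathcal{Y}(\sigma_i)>0$), followed by an appeal to the monotonicity method; you reproduce exactly this, and your filling-in of the arguments for $\varphi\to 1$ on $\binf$ (via Lemme~\ref{limsup}) and for the decay rate $\varphi-1\in\mathcal{C}^{k,\alpha}_\delta$ (via barriers $1\pm K\rho^\delta$ as in Lemme~\ref{compBord} and elliptic regularity, the extra term $|L|^2\varphi^{-\kappa-3}$ being absorbed into the source since $|L|^2\in\mathcal{C}^{k-2,\alpha}_\delta$) is consistent with the proofs of Th\'eor\`emes~\ref{PrescScalDiri} and~\ref{comportementBinf} to which the paper implicitly defers. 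Your observation that the a priori lower bound required by Proposition~\ref{propMonotonie} is automatic here, because $\varphi_\epsilon$ is strictly positive and continuous on the compactification so $\inf_{\Mbar}\varphi_\epsilon>0$, is also correct and worth making explicit.
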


\noindent\textbf{Un contre-exemple dans le cas $\epsilon_i \tau = -1$}\label{secCtrEx}
Comme nous l'avons vu, dans le cas limite $\epsilon_i \tau = -1$, la situation est plus complexe. Nous allons montrer que l'existence d'une solution n'est plus garantie. Rappelons que ce cas-ci n'appara\^it que lorsque la constante cosmologique $\Lambda_c$ est nulle. Les difficult\'es apparaissent dans la construction de la sur-solution. Nous allons donc nous concentrer sur le cas $L=0$. Le probl\`eme que nous souhaitons r\'esoudre est alors le suivant :

\begin{equation}
\label{ctrEx}
\left\{
\begin{array}{ll}
-\frac{4(n-1)}{n-2} \Delta \varphi + \scal~\varphi + n(n-1) \varphi^{\kappa+1} = 0 & \text{sur $\mathring{M}$}\\
\frac{2(n-1)}{n-2} \partial_\nu \varphi - H \varphi = (n-1) \varphi^{\frac{\kappa}{2}+1} & \text{sur $\bint$}.
\end{array}
\right.
\end{equation}

Nous allons consid\'erer le cas d'un horizon \`a bord torique :  $M = \mathbb{T}^{n-1} \times ]0; A]$ ($A > 0$) muni de la m\'etrique $g = \frac{1}{y^2} \left( dy^2 + g_0\right)$ o\`u $y$ d\'esigne la composante selon $]0; A]$ et $g_0$ est une m\'etrique sur $\mathbb{T}^{n-1}$ invariante sous les translations. Remarquons que cet espace est la partie du demi-espace de Poincar\'e \og en dessous \fg~de $y=A$ compactifi\'ee par l'action d'un r\'eseau de $\mathbb{R}^{n-1}$. Le groupe d'isom\'etrie contient $\mathbb{T}^{n-1}$. Les fonctions invariantes sous ce groupe de sym\'etrie ne d\'ependent alors que d'une seule coordonn\'ee radiale et l'\'etude des solutions de \eqref{ctrEx} invariantes sous l'action se ram\`ene \`a l'\'etude des solutions d'une \'equation diff\'erentielle. Montrons donc le lemme suivant :

\begin{lemma} Supposons qu'il existe une solution $\varphi \in \mathcal{C}^{2, \alpha}_0$, $\varphi > 0$ au probl\`eme \eqref{ctrEx} telle que $\varphi \to_{\binf} 1$, alors il existe une solution $\tilde{\varphi} \in \mathcal{C}^{2, \alpha}_\delta$ ($\delta \in [0; n[$) \`a \eqref{ctrEx} invariante sous l'action de $\mathbb{T}^{n-1}$ telle que $0<\tilde{\varphi}\leq\varphi$, $\tilde{\varphi} \to_{\binf} 1$.
\end{lemma}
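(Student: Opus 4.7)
The plan is to apply the monotone iteration of Proposition \ref{propMonotonie} within the class of $\mathbb{T}^{n-1}$-invariant functions, using the hypothetical solution $\varphi$ to manufacture an invariant supersolution and borrowing from Section \ref{secCMC} an invariant subsolution.

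First, I would set $\varphi_+(y) := \min_{x \in \mathbb{T}^{n-1}} \varphi(x,y)$. Since $g$, $\scal$ and the geometry of $\bint$ are all $\mathbb{T}^{n-1}$-invariant, every translate $\varphi \circ \phi$ (with $\phi \in \mathbb{T}^{n-1}$) is again a solution of \eqref{ctrEx}, so $\varphi_+$ is the pointwise infimum of a family of solutions. Standard viscosity/distributional arguments then give that $\varphi_+$ is a supersolution of the interior equation. At the boundary $\{y=A\}$, if $x_0$ is a minimizer of $\varphi(\cdot,A)$, the local inequality $\varphi_+(y)\leq \varphi(x_0,y)$ with equality at $y=A$ yields $\partial_\nu \varphi_+(A)\geq \partial_\nu \varphi(x_0,A)$, and combined with the boundary equation for $\varphi$ at $(x_0,A)$ this gives the boundary supersolution inequality. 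The function $\varphi_+$ is Lipschitz, depends only on $y$, satisfies $0 < \min_{\bint}\varphi \leq \varphi_+ \leq \varphi$, and tends to $1$ at $\binf$. A mollification of $\varphi_+$ in $y$ on a small collar of $\bint$ provides the regularity required by Proposition \ref{propMonotonie}.

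For the matching invariant subsolution I would use the construction of Section \ref{secCMC}: the Dirichlet solution $\varphi_\epsilon$ of Lemma \ref{phiEpsilon} (applied with a suitable $\hscal \leq -n(n-1)$) is a subsolution of the full system \eqref{ctrEx} with $\varphi_\epsilon = \epsilon$ on $\bint$ and $\varphi_\epsilon \to 1$ at $\binf$, and the uniqueness statement of Theorem \ref{PrescScalDiri}, together with the $\mathbb{T}^{n-1}$-invariance of $g$ and the constant Dirichlet data, forces $\varphi_\epsilon$ to be $\mathbb{T}^{n-1}$-invariant. For $\epsilon < \min_{\bint}\varphi_+$, the $\log$-comparison argument from the uniqueness part of Theorem \ref{PrescScalDiri} gives $\varphi_\epsilon \leq \varphi_+$.

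Finally, I would run the monotone iteration of Proposition \ref{propMonotonie} starting from $\varphi_0 = \varphi_+$. At each step $\varphi_{i+1}$ is the unique $\mathcal{C}^{2,\alpha}_0$ solution of a linear boundary-value problem whose coefficients and right-hand sides are $\mathbb{T}^{n-1}$-invariant (by induction and invariance of $g$), so Lemma \ref{pbLin} forces every $\varphi_i$ to be $\mathbb{T}^{n-1}$-invariant. Passing to the monotone limit, one obtains a $\mathbb{T}^{n-1}$-invariant $\mathcal{C}^{2,\alpha}$ solution $\tilde\varphi$ of \eqref{ctrEx} satisfying $0 < \varphi_\epsilon \leq \tilde\varphi \leq \varphi_+ \leq \varphi$ and $\tilde\varphi \to 1$ at $\binf$; the improved decay $\tilde\varphi - 1 \in \mathcal{C}^{2,\alpha}_\delta$ for any $\delta \in [0,n)$ then follows from Lemma \ref{compBord} and Theorem \ref{comportementBinf} applied inside the invariant class. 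The main technical obstacle is Step 1, namely verifying that $\varphi_+$, defined as a pointwise infimum, is a supersolution in exactly the distributional sense and with the regularity near $\bint$ demanded by Proposition \ref{propMonotonie}; this is handled by mollifying $\varphi_+$ in $y$ on a collar of $\bint$ and absorbing the resulting loss in the supersolution inequality by a small upward perturbation.
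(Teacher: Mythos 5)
Your proof is correct in spirit but takes a genuinely different and more technical route than the paper's. You construct an invariant \emph{supersolution} $\varphi_+(y) = \min_{x \in \mathbb{T}^{n-1}}\varphi(x,y)$ by taking the pointwise infimum of $\varphi$ over its torus orbit, verify it is a distributional supersolution (the main delicate point, which you correctly flag: one needs a viscosity-to-distribution argument for the interior inequality and a mollification plus perturbation near $\bint$ to get the $\mathcal{C}^1$ regularity demanded by Proposition \ref{propMonotonie}), and then iterate \emph{downward} from $\varphi_+$.

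The paper avoids all of this by making a simpler observation: the monotone iteration scheme preserves $\mathbb{T}^{n-1}$-invariance because the linear boundary-value problems solved at each step (Lemme \ref{pbLin}) have invariant coefficients and, \emph{by induction on the iterates}, invariant data — the uniqueness of the linear solution then forces the next iterate to be invariant. So one only needs an \emph{invariant subsolution}, namely $\varphi_\epsilon$ (which, as you note, is invariant by uniqueness of the Dirichlet problem). The paper then runs the iteration \emph{upward} from $\varphi_\epsilon$; the iterates increase, stay invariant, and are bounded above by $\varphi$ itself (one does not need the upper bound to be invariant: $\varphi$ is a fixed point of the iteration map, which is order-preserving, so $\varphi_\epsilon \leq \varphi$ propagates to $\varphi_i \leq \varphi$ for all $i$). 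The limit is then the desired invariant solution, squeezed between $\varphi_\epsilon$ and $\varphi$. In short, the key idea you do not exploit is that invariance propagates through the iteration from the starting point alone, so no invariant supersolution needs to be manufactured; this sidesteps the infimum-of-solutions and mollification machinery entirely. Your version is still sound provided the regularity issues with $\varphi_+$ near $\bint$ are handled as you outline, but it buys nothing over the paper's cleaner argument.
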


\begin{proof} Reprenons les fonctions $\varphi_\epsilon$ construites dans le lemme \ref{phiEpsilon} dans le cas particulier $\scal = \hscal = -n(n-1)$) :

\begin{equation}
\label{phiEpsilonPb}
\left\lbrace
\begin{array}{rcl}
-\frac{4 (n-1)}{n-2} \Delta \varphi_\epsilon - n(n-1) \varphi_\epsilon & = & -n(n-1) \varphi_\epsilon^{\kappa+1}\\
\varphi_\epsilon & = & \epsilon \quad\mathrm{sur}~\partial_0 M\\
\varphi_\epsilon & \underset{\binf}{\to} & 1.
\end{array}
\right.
\end{equation}

Comme $\varphi > 0$, on peut choisir $\epsilon$ tel que $\varphi > \epsilon$ sur $\bint$. Quitte \`a diminuer $\epsilon$, on peut supposer, de plus, que $\varphi_\epsilon$ est une sous-solution de \eqref{ctrEx}. Comme $\varphi_\epsilon$ est l'unique solution de \eqref{phiEpsilonPb}, elle est invariante sous l'action du groupe d'isom\'etrie. Reprenons la m\'ethode de monotonie (section \ref{secMonotonie}), il est possible de construire la suite des it\'er\'es en partant de $\varphi_\epsilon$. Cette suite est alors croissante et, par r\'ecurrence, la suite des it\'er\'es $\varphi_i$ est invariante sous le groupe des isom\'etries et inf\'erieures \`a $\varphi$. La fonction limite $\tilde{\varphi} = \lim_{i \to \infty} \varphi_i$ est alors une solution $\mathcal{C}^{2, \alpha}_0$ de \eqref{ctrEx} invariante sous le groupe d'isom\'etries, $\varphi_\epsilon \leq \tilde{\varphi} \leq \varphi$. La preuve du th\'eor\`eme \ref{comportementBinf} montre alors que $\tilde{\varphi} \in \mathcal{C}^{2, \alpha}_\delta,~\forall~\delta \in [0; n[$.
\end{proof}

Ce lemme montre que nous pouvons nous contenter de montrer qu'il  n'existe pas de solution $\tilde{\varphi}$ invariante sous le groupe des isom\'etries de $M$. Les tores $y=cste$ ont une courbure moyenne $H = n-1$. Introduisons la coordonn\'ee $r = r(y)$ telle que $r$ soit la distance pour la m\'etrique $g$ au tore $y=A$. Si $\phitil$ est une fonction de $r$, on a (formule \eqref{decompLaplacienGeodesique}) :

$$\Delta \phitil = \partial_r^2 \phitil + H_r \partial_r \phitil = \partial_r^2 \phitil + (n-1) \partial_r \phitil.$$

L'\'equation \eqref{ctrEx} se ram\`ene \`a :

\begin{equation}
\label{ctrEx1}
\left\{
\begin{array}{l}
\phitil''(r)+(n-1)\phitil'(r)+\frac{n(n-2)}{4}\left(\phitil(r)-\phitil^{\kappa+1}(r)\right)=0 \quad\text{sur $[0; \infty)$}\\
- \phitil'(0) = \frac{n-2}{2}\left(\phitil(0) + \phitil^{\frac{\kappa}{2}+1}(0)\right)\\
\phitil(r) \underset{r \to \infty}{\to} 1.
\end{array}
\right.
\end{equation}

Posons $$B(r) = \phitil'(r) + \frac{n-2}{2} \left(\phitil(r) + \phitil^{\frac{\kappa}{2}+1}(r)\right),$$ on a alors :

\begin{eqnarray*}
B'(r) & = & \phitil''(r) + \frac{n-2}{2} \left(\phitil'(r) + \frac{n}{n-2} \phitil^{\frac{\kappa}{2}}(r) \phitil'(r)\right)\\
			& = & -(n-1) \phitil'(r) - \frac{n(n-2)}{4} \left(\phitil(r) - \phitil^{\kappa+1}(r)\right)
					+ \frac{n-2}{2} \phitil'(r) + \frac{n}{2} \phitil^{\frac{\kappa}{2}}(r) \phitil'(r)\\
			& = & \frac{n}{2} \left(\phitil^{\frac{\kappa}{2}}(r) - 1\right) \phitil'(r)
					- \frac{n(n-2)}{4} \phitil(r) \left(1-\phitil^{\frac{\kappa}{2}}\right) \left(1+\phitil^{\frac{\kappa}{2}}\right)\\
			& = & \frac{n}{2} \left(\phitil^{\frac{\kappa}{2}}(r) - 1\right) B(r).
\end{eqnarray*}

Or, par hypoth\`ese, $B(0) = 0$ donc $B(r) = 0$ pour tout $r \geq 0$, ce qui contredit le fait que $\lim_{r \to \infty} B(r) = n-2$ (car $\phitil(r) \to 1$ et $\phitil'(r) \to 0$ \`a l'infini).

\section{Construction de TT-tenseurs}
Par la suite, on fixe une vari\'et\'e asymptotiquement hyperbolique $(M, g)$ contenant \'eventuellement un bord interne $\bint$ et on note $\nu$ la normale sortante de $\bint$ (dans la section \ref{secCMC}, le bord interne correspondra aux horizons apparents de la donn\'ee initiale, $\nu$ sera alors la normale entrante de l'horizon). On fixe un 2-tenseur sym\'etrique de trace nulle $L_0$ (quelconque) puis on cherche une 1-forme $\psi$ telle que $L_{ij} = L_{0ij} + \rlie_{\psi^\sharp} g$ soit un TT-tenseur. On d\'efinit donc le \textbf{laplacien vectoriel} $\Delta_{TT}$ par :

\begin{eqnarray*}
\Delta_{TT} \psi   & = & \text{div} \left( \rlie_{\psi^\sharp} g \right),\\
\Delta_{TT} \psi_j & = & \Delta \psi_j + \nabla^i\nabla_j \psi_i - \frac{2}{n} \nabla_j \left(\nabla^k \psi_k\right).
\end{eqnarray*}

Il est facile de voir que cet op\'erateur est elliptique au sens de \cite{ADN} pour $s_i = 0$ et $t_j = 2$. Une condition au bord naturelle pour $\psi$ est de prescrire $L_{0\nu i}$ sur $\bint$. On introduit donc \'egalement l'op\'erateur $\mathcal{B}$ correspondant aux conditions au bord :

\begin{eqnarray*}
\mathcal{B} \psi   & = & \rlie_{\psi^\sharp} g(\nu, .),\\
\mathcal{B} \psi_i & = & \nabla_\nu \psi_i + \nabla_i \psi_\nu - \frac{2}{n} \nabla^k \psi_k g_{\nu i}.
\end{eqnarray*}

L'op\'erateur $\mathcal{B}$ satisfait la condition de compl\'ementarit\'e au bord\label{condComplBord} (voir \cite{ADN}) pour $r_h = -1$. On pose :
\begin{eqnarray}
\begin{array}{rcccc}
\mathcal{P}^{k, p}_\delta :
W^{k, p}_\delta(M, T^*M) & \to & W^{k-2, p}_\delta(M, T^*M) & \times & W^{k-1-\frac{1}{p}, p}\left(\bint, T^*M\right)\\
			\psi	       & \mapsto & \left(\Delta_{TT} \psi\right. &,       & \left. \mathcal{B} \psi \right)
\end{array}\\
\begin{array}{rcccc}
\mathcal{P}^{k, \alpha}_\delta :
\mathcal{C}^{k, \alpha}_\delta(M, T^*M) & \to& \mathcal{C}^{k-2, \alpha}_\delta(M, T^*M) & \times & \mathcal{C}^{k-1, \alpha}\left(\bint,T^*M\right)\\
			\psi	       & \mapsto & \left(\Delta_{TT} \psi\right. &,       & \left. \mathcal{B} \psi \right).
\end{array}
\end{eqnarray}

Par la suite, nous ne donnerons que les d\'emonstrations dans le cas des espaces de Sobolev, celles dans le cas des espaces de H\"older \'etant analogues. De plus nous abr\'egerons $\mathcal{P}^{k, p}_\delta$ en $\mathcal{P}$.

\subsubsection{Th\'eor\`eme de Fredholm}
\begin{lemma}[Calcul des exposants critiques de $\Delta_{TT}$]\label{lmExpoCrit} Les exposants critiques (au sens de \cite{LeeFredholm}) de $\Delta_{TT}$ sont $s=-2$ et $s=n-1$. Le rayon indicial de $\Delta_{TT}$ est $R = \frac{n+1}{2}$.
\end{lemma}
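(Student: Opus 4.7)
The plan is to follow Lee's framework in \cite{LeeFredholm}: the critical exponents of a geometric elliptic operator on $(M,g)$ are determined by the \emph{indicial operator} obtained by freezing coefficients at $\binf$ and evaluating on test sections of the form $\rho^s \omega_0$, where $\omega_0$ is pulled back from $\binf$. By Lee's theorem, once these exponents are computed on the model asymptotic geometry, they govern the mapping properties of $\Delta_{TT}$ in the weighted spaces, and the indicial radius $R$ is the distance from the $L^2$-symmetry weight (which for $1$-forms sits at $\frac{n-3}{2}$ rather than at the scalar value $\frac{n-1}{2}$) to the nearest critical exponent.

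First, I would choose a collar of $\binf$ with geodesic-style defining function $\rho$ so that $g = \rho^{-2}(d\rho^2 + h_\rho)$, with $h_0$ a metric on $\binf$. I would then decompose a $1$-form $\psi$ into its normal and tangential components,
$$\psi = f\,\rho^{-1}d\rho + \alpha, \qquad \alpha \in T^*\binf,$$
the factor $\rho^{-1}$ being chosen so that $\rho^{-1}d\rho$ has unit $g$-norm. Using the Christoffel symbols of the model metric (which on $\mathbb{H}^n$ read $\Gamma^\rho_{\rho\rho}=-\rho^{-1}$, $\Gamma^\rho_{ii}=\rho^{-1}$, $\Gamma^i_{i\rho}=-\rho^{-1}$, all others vanishing to leading order), a direct computation gives, for a test form $\psi = \rho^s(f_0\,\rho^{-1}d\rho + \alpha_0)$ with $f_0$, $\alpha_0$ independent of $\rho$, an expression
$$\Delta_{TT}\psi = \rho^s\bigl[P_N(s)\,f_0\,\rho^{-1}d\rho + P_T(s)\,\alpha_0\bigr] + O(\rho^{s+1}),$$
where $P_N(s)$ and $P_T(s)$ are quadratic polynomials in $s$, plus off-diagonal coupling terms coming from the mixed derivatives $\nabla^i\nabla_j\psi_i$ and $\nabla_j\nabla^i\psi_i$ in the definition of $\Delta_{TT}$.

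Second, I would organize this computation as a $2\times 2$ indicial matrix $\mathcal{I}(s)$ acting on the pair $(f_0,\alpha_0)$, and determine the critical exponents as the roots of $\det \mathcal{I}(s) = 0$. The term $-\tfrac{2}{n}\nabla_j(\nabla^k\psi_k)$ was precisely inserted to make $\Delta_{TT}\psi$ trace-free, and it guarantees that the off-diagonal coupling is nontrivial but that the determinant still factors as a product of two quadratics in $s$, whose roots I expect to be $s=-2$ and $s=n-1$ (the latter being the exponent of the conformal factor for a formally adjoint tensor-density problem, the former being forced by the tensor rank shift relative to the scalar exponents $0$ and $n-1$). I would verify this factorization by evaluating $\det\mathcal{I}(s)$ at a few values of $s$, and by the classical check that $s=n-1$ corresponds to a conformal Killing field ansatz at infinity.

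Third, once the critical exponents $-2$ and $n-1$ are established, the indicial radius is read off as $R = \tfrac{(n-1)-(-2)}{2} = \tfrac{n+1}{2}$, with center of symmetry $\tfrac{n-3}{2}$; this center is the weight for which $\Delta_{TT}$ is formally self-adjoint on $L^2_{(n-3)/2}(M,T^*M)$, the shift by $1$ with respect to the scalar center $\tfrac{n-1}{2}$ coming from the tensorial weight of a $1$-form. The main obstacle will be keeping track of the matrix structure of $\mathcal{I}(s)$: the coupling between $f_0$ and $\alpha_0$ is genuine, and one must be careful not to declare the system diagonal prematurely. A useful reduction is to further decompose $\alpha_0$ into a gradient plus a divergence-free part on $\binf$; the divergence-free sector decouples cleanly and already forces one root, while the remaining $2\times 2$ block in $(f_0, d^*\alpha_0)$ yields the other.
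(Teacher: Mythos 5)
Your plan is correct in spirit and arrives at the same underlying computation as the paper: conjugate $\Delta_{TT}$ by $\rho^s$, extract the leading order near $\binf$, and read off the roots of the indicial map. The paper's computation is more direct than yours. Rather than decomposing $\psi$ into normal and tangential pieces, it uses the identity $\nabla_i X^j = \nabbar_i X^j + U^j_{ik}X^k$ relating the Levi--Civita connections of $g$ and $\gbar = \rho^2 g$ to evaluate $g^{ik}\nabla_k\bigl[\nabla_i(\rho^s\psi_j) + \nabla_j(\rho^s\psi_i) - \tfrac{2}{n}g_{ij}g^{kl}\nabla_k(\rho^s\psi_l)\bigr]$ globally on the bundle, and obtains
$$I_s(\Delta_{TT})(\psi) = (s+2)(s-n+1)\left[\psi + \left(1 - \tfrac{2}{n}\right)\psi(\nabbar\rho)\,d\rho\right].$$
The fiber endomorphism in brackets is invertible (eigenvalue $1$ on covectors annihilating $\nabbar\rho$, eigenvalue $2-\tfrac{2}{n}$ on multiples of $d\rho$), so the critical exponents are exactly $s=-2$ and $s=n-1$, each with full fiber multiplicity.

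Two points in your write-up deserve correction. First, you predict \emph{genuine coupling} between $f_0$ and $\alpha_0$ in the indicial matrix, and even propose to split $\alpha_0$ further into exact and coexact parts to untangle a $2\times 2$ block. As the display above shows, the leading-order endomorphism preserves the orthogonal splitting into $d\rho$-multiples and tangential covectors, so $I_s(\Delta_{TT})$ is already diagonal in your decomposition; the mixed derivatives you worry about contribute only to the $o(\rho^s)$ error. Your plan would still succeed -- the computation simply reveals there is nothing to untangle -- but the ``main obstacle'' you single out is not actually there. Second, the assertion that $\Delta_{TT}$ is formally self-adjoint on $L^2_{(n-3)/2}(M,T^*M)$ is not quite right. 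Since $\Delta_{TT} = \divg\circ\rlie_{(\cdot)^\sharp}g$, the operator is formally self-adjoint for the ordinary $L^2$ pairing, i.e.\ at weight $\delta = 0$, which is indeed the center of the $p=2$ Fredholm window $|\delta| < \tfrac{n+1}{2}$ in Theorem \ref{thmConstrTT}. The value $\tfrac{n-3}{2}$ is the midpoint of the characteristic roots $\{-2, n-1\}$, a different normalization from the $L^2_\delta$-weight scale; your final answer $R = \tfrac{(n-1)-(-2)}{2} = \tfrac{n+1}{2}$ is nevertheless correct.
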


\begin{proof} Notons $$U^k_{ij} = -\left( \frac{\partial_i \rho}{\rho} \delta_j^k + \frac{\partial_j \rho}{\rho} \delta_i^k - \gbar_{ij} \gbar^{kl}\frac{\partial_l \rho}{\rho} \right).$$ $U^k_{ij}$ satisfait \`a : $$\nabla_i X^j = \nabbar_i X^j + U_{ik}^j X^k,$$
\noindent o\`u $\nabbar$ d\'esigne la connexion de Levi-Civita associ\'ee \`a $\gbar$. D\'ecomposons le calcul en plusieurs \'etapes :

\begin{eqnarray*}
&   & \nabla_i \left( \rho^s \psi_j \right) + \nabla_j \left(\rho^s \psi_i \right) - \frac{2}{n} g_{ij} g^{kl} \nabla_k \left( \rho^s \psi_l \right)\\
& = & \nabbar_i \left( \rho^s \psi_j \right) + \nabbar_j \left(\rho^s \psi_i \right) - \frac{2}{n} g_{ij} g^{kl} \nabbar_k \left( \rho^s \psi_l \right)
  - U^k_{ij} \rho^s \psi_k - U^k_{ji} \rho^s \psi_k + \frac{2}{n} g_{ij} g^{kl} U_{kl}^m \rho^s \psi_m\\
& = & \rho^s \left( \nabbar_i \psi_j + \nabbar_j \psi_i - \frac{2}{n} g_{ij} g^{kl} \nabbar_k \psi_l \right)
  + s \rho^{s-1} \left( \partial_i \rho \psi_j + \partial_j \rho \psi_i - \frac{2}{n} g_{ij} g^{kl} \partial_k \rho \psi_l \right)\\
&&+ 2 \left( \partial_i \rho \delta^k_j + \partial_j \rho \delta^k_i - \gbar_{ij} \gbar^{kl} \partial_l \rho \right) \rho^{s-1} \psi_k
  - \frac{2}{n} \gbar_{ij} \gbar^{kl} \left( \partial_k \rho \delta^m_l + \partial_l \rho \delta^m_k - g_{kl} \gbar^{mp} \partial_p \rho \right) \rho^{s-1} \psi_m\\
& = & \rho^s \left( \nabbar_i \psi_j + \nabbar_j \psi_i - \frac{2}{n} g_{ij} g^{kl} \nabbar_k \psi_l \right)
  + s \rho^{s-1} \left( \partial_i \rho \psi_j + \partial_j \rho \psi_i - \frac{2}{n} g_{ij} g^{kl} \partial_k \rho \psi_l \right)\\
&&+ 2 \left( \partial_i \rho \delta^k_j + \partial_j \rho \delta^k_i - \gbar_{ij} \gbar^{kl} \partial_l \rho \right) \rho^{s-1} \psi_k
  + \frac{2}{n}(n-2) \gbar_{ij} \gbar^{kl} \rho^{s-1} \partial_k \rho \psi_l\\
& = & \rho^s \left( \nabbar_i \psi_j + \nabbar_j \psi_i - \frac{2}{n} g_{ij} g^{kl} \nabbar_k \psi_l \right)
  + (s+2) \rho^{s-1} \left( \partial_i \rho \psi_j + \partial_j \rho \psi_i - \frac{2}{n} g_{ij} g^{kl} \partial_k \rho \psi_l \right).\\
\end{eqnarray*}

Si $T_{ij}$ est un tenseur sym\'etrique de trace nulle :

\begin{eqnarray*}
g^{ik} \nabla_k T_{ij} & = & \rho^2 \gbar^{ik} \left( \nabbar_k T_{ij} - U_{ki}^l T_{lj} - U^l_{kj} T_{il} \right)\\
		       & = & \rho^2 \gbar^{ik} \left[ \nabbar_k T_{ij}
			 +\left(\frac{\partial_k \rho}{\rho}\delta^l_i+\frac{\partial_i \rho}{\rho}\delta^l_k-\gbar_{ki}\gbar^{lm}\frac{\partial_m\rho}{\rho}\right)
			 			T_{lj}\right.\\
		&  & \left. +\left(\frac{\partial_k \rho}{\rho}\delta^l_j+\frac{\partial_i \rho}{\rho} \delta^l_k-\gbar_{kj}\gbar^{lm}\frac{\partial_m\rho}{\rho}\right) T_{il}
			 \right]\\
		       & = & \rho^2 \gbar^{ik} \nabbar T_{ij} - (n-2) \rho \gbar^{ik} \partial_k \rho T_{ij}
\end{eqnarray*}

Finalement :

\begin{eqnarray*}
&   & g^{ik} \nabla_k\left[\nabla_i\left(\rho^s\psi_j\right)+\nabla_j\left(\rho^s\psi_i\right)-\frac{2}{n}g_{ij}g^{kl}\nabla_k\left(\rho^s\psi_l\right)\right]\\
& = & g^{ik} \nabla_k\left[\rho^s \left( \nabbar_i \psi_j + \nabbar_j \psi_i - \frac{2}{n} g_{ij} g^{kl} \nabbar_k\psi_l \right)\right.\\
&   & + \left. (s+2)\rho^{s-1}\left( \partial_i \rho \psi_j + \partial_j \rho \psi_i - \frac{2}{n} g_{ij} g^{kl} \partial_k \rho \psi_l \right)\right]\\
& = & \rho^2 \gbar^{ik} \nabbar_k\left[\rho^s \left( \nabbar_i \psi_j + \nabbar_j \psi_i - \frac{2}{n} g_{ij} g^{kl} \nabbar_k \psi_l \right)\right.\\
&   & \left. + (s+2) \rho^{s-1} \left( \partial_i \rho \psi_j + \partial_j \rho \psi_i - \frac{2}{n} g_{ij} g^{kl} \partial_k \rho \psi_l \right)\right]\\
& & - (n-2)\rho\gbar^{ik}\partial_k \rho \left[\rho^s \left( \nabbar_i \psi_j + \nabbar_j \psi_i - \frac{2}{n} g_{ij} g^{kl} \nabbar_k \psi_l \right)\right.\\
& & + \left.(s+2) \rho^{s-1} \left( \partial_i \rho \psi_j + \partial_j \rho \psi_i - \frac{2}{n} g_{ij} g^{kl} \partial_k \rho \psi_l \right)\right]\\
& = & (s+2)(s-1)\gbar^{ik} \left(\partial_i \rho \psi_j + \partial_j \rho \psi_i - \frac{2}{n} \gbar_{ij} \gbar^{kl} \partial_k \rho \psi_l\right)\\
& & - (s+2)(n-2)\gbar^{ik}\partial_k\rho\left(\partial_i\rho\psi_j+\partial_j\rho\psi_i-\frac{2}{n}\gbar_{ij}\gbar^{kl}\partial_k\rho\psi_l\right) + o(\rho^s)\\
& = & (s+2)(s-n+1)\gbar^{ik}\partial_k\rho\left(\partial_i\rho\psi_j+\partial_j\rho\psi_i-\frac{2}{n}\gbar_{ij}\gbar^{kl}\partial_k\rho\psi_l\right) + o(\rho^s).
\end{eqnarray*}
L'application indiciale \cite[Chapitre 4]{LeeFredholm} de $\Delta_{TT}$ est donc donn\'ee par :
$$I_s\left(\Delta_{TT}\right)(\psi) = (s+2)(s-n+1)\left[\psi+\left(1-\frac{2}{n}\right)\psi\left(\nabbar\rho\right)d\rho\right].$$

Ce qui montre que les exposants critiques de $\Delta_{TT}$ sont $s=-2$ et $s=n-1$.
\end{proof}

Nous allons tout d'abord montrer que $\mathcal{P}$ est semi-Fredholm. Notre d\'emonstration est bas\'ee sur celle de \cite{LeeFredholm}.

\begin{lemma}[Estimation $L^2$ \`a l'infini] Il existe un compact $K$ de $M$ et une constante $C > 0$ tels que si $u \in \mathcal{C}^2_c\left(M\setminus K\right)$, on a : $$\|u\|_{L^2} \leq C \|\Delta_{TT} u\|_{L^2}.$$
\end{lemma}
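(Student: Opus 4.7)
The plan is to reduce this to a weighted Korn-type inequality for the conformal Killing operator, exploiting that on an asymptotically hyperbolic manifold the Ricci curvature tends to $-(n-1)g$ at infinity, which provides the coercivity.

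First, I would set $S = \mathring{\mathcal{L}}_{u^\sharp} g$, so that $\Delta_{TT} u = \mathrm{div}\, S$. Since $u$ is compactly supported in $M \setminus K$ (in particular away from both $\bint$ and $\binf$), integration by parts gives cleanly
\[
\int_M |S|^2\, d\mu_g = -2 \int_M \langle u, \Delta_{TT} u\rangle\, d\mu_g,
\]
using that $S$ is traceless so only the symmetric traceless part of $\nabla u$ contributes. By Cauchy–Schwarz,
\[
\|S\|_{L^2}^2 \leq 2\, \|u\|_{L^2}\, \|\Delta_{TT} u\|_{L^2}.
\]

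Next, I would establish a coercive Korn-type estimate $\|u\|_{L^2} \leq C \|S\|_{L^2}$ for $u$ supported far enough at infinity. The computation is the classical one: for a vector field $X = u^\sharp$, expanding and integrating by parts yields
\[
\int_M |\mathcal{L}_X g|^2 = 2 \int_M |\nabla X|^2 + 2 \int_M (\mathrm{div}\, X)^2 - 2 \int_M \ric(X,X),
\]
and removing the trace contributes $-\tfrac{4}{n} \int (\mathrm{div}\, X)^2$, so
\[
\int_M |S|^2 = 2 \int_M |\nabla X|^2 + 2\bigl(1-\tfrac{2}{n}\bigr) \int_M (\mathrm{div}\, X)^2 - 2 \int_M \ric(X,X).
\]
Because $g$ is asymptotically hyperbolic, $\ric \to -(n-1) g$ in a neighborhood of $\binf$. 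Fixing $\varepsilon > 0$ small, I can choose a compact $K \subset M$ large enough that $-\ric(X,X) \geq (n-1-\varepsilon)|X|^2$ pointwise on $M \setminus K$. Dropping the nonnegative terms $\int |\nabla X|^2$ and $\int (\mathrm{div}\, X)^2$ yields
\[
\int_M |S|^2 \geq 2(n-1-\varepsilon) \int_M |u|^2
\]
for any $u \in \mathcal{C}^2_c(M\setminus K)$.

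Combining the two inequalities,
\[
2(n-1-\varepsilon) \|u\|_{L^2}^2 \leq \|S\|_{L^2}^2 \leq 2\, \|u\|_{L^2}\, \|\Delta_{TT} u\|_{L^2},
\]
whence $\|u\|_{L^2} \leq \frac{1}{n-1-\varepsilon} \|\Delta_{TT} u\|_{L^2}$, which gives the desired estimate with $C = (n-1-\varepsilon)^{-1}$. The main obstacle is the sign computation in the Bochner-type identity for $\mathcal{L}_X g$; the rest is bookkeeping and the asymptotic control of $\ric$. Note that this is consistent with Lemma \ref{lmExpoCrit}: the coercivity constant $2(n-1)$ corresponds to the critical exponents $-2$ and $n-1$ lying on either side of $\delta = \frac{n-1}{2}$ at which $L^2$ estimates (with weight $0$) become natural.
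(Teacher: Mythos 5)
Your proof is correct and follows essentially the same route as the paper: both hinge on the identity
\[
\int_M \langle u, \Delta_{TT} u\rangle \,d\mu_g = -\int_M \Bigl[\,|\nabla u|^2 + \bigl(1-\tfrac{2}{n}\bigr)(\operatorname{div} u^\sharp)^2 - \ric(u,u)\,\Bigr] d\mu_g,
\]
dropping the nonnegative gradient and divergence terms, using the asymptotic bound $-\ric \geq c\,g$ near $\binf$, and closing with Cauchy--Schwarz. The only difference is cosmetic: you thread $\|S\|_{L^2}^2 = -2\int \langle u, \Delta_{TT}u\rangle$ through the middle and present the argument as a pair of inequalities ($\|S\|^2 \leq 2\|u\|\,\|\Delta_{TT}u\|$ and $\|S\|^2 \geq 2(n-1-\varepsilon)\|u\|^2$), whereas the paper works directly with $\int \psi^k \Delta_{TT}\psi_k$ and bounds it from below in a single chain; the two computations are algebraically identical.
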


\begin{proof} En coordonn\'ees, on a :
\begin{eqnarray*}
\Delta_{TT} \psi_k & = & g^{ij}\left( \nabla_i \nabla_j \psi_k + \nabla_i \nabla_k \psi_j - \frac{2}{n} g_{jk} \nabla_i \nabla^l \psi_l \right)\\
		   & = & g^{ij} \nabla_i \nabla_j \psi_k + g^{ij}\left( \nabla_k \nabla_i \psi_j - \riemuddd{l}{j}{i}{k} \psi_l\right)
		 	  - \frac{2}{n} \nabla_k \nabla^l\psi_l\\
		   & = & g^{ij} \nabla_i \nabla_j \psi_k + \ricud{l}{k} \psi_l + \left(1-\frac{2}{n}\right) \nabla_k \left(\nabla^l \psi_l\right).
\end{eqnarray*}

Donc :

\begin{eqnarray*}
\int_M \psi^k \Delta_{TT} \psi_k & = & \int_M \psi^k \left(g^{ij} \nabla_i \nabla_j \psi_k + \ricud{l}{k} \psi_l +
						\left(1-\frac{2}{n}\right) \nabla_k \left(\nabla^l \psi_l\right)\right)\\
				    & = & - \int_M \left[\left(\nabla_i \psi_j \right) \left(\nabla^i \psi^j \right)+ \left(1-\frac{2}{n}\right)
						\left(\nabla^k\psi_k\right)^2 - \ricuu{k}{l}\psi_k \psi_l \right].
\end{eqnarray*}

Or, comme $M$ est asymptotiquement hyperbolique $\ric \simeq -(n-1) g$ au voisinage du bord \`a l'infini. On peut donc choisir $K$ tel que, sur $M\setminus K$, on a $-\ric \geq \frac{n-1}{2} g$. On a alors :
$$\|\psi\|_{L^2} \|\Delta_{TT}\psi\|_{L^2} \geq \left| \int_M \psi^k \Delta_{TT} \psi_k \right| \geq
	\int_M \left(- \ric\right)^{kl}\psi_k \psi_l \geq \frac{n-1}{2} \|\psi\|_{L^2}^2.$$
\end{proof}

\begin{lemma}~
\begin{itemize}
\item Soient $\delta, \delta'$ tels que $\left| \delta + \frac{n-1}{p} - \frac{n-1}{2} \right| < \frac{n+1}{2}$,
$\left| \delta' + \frac{n-1}{p} - \frac{n-1}{2} \right| < \frac{n+1}{2}$ avec $\delta - 1 < \delta' < \delta$. Il existe une constante $C > 0$ telle que, $\forall~\psi \in W^{k, p}_\delta (M, T^*M)$, on a :
$$\| \psi \|_{W^{k, p}_\delta} \leq C \left( \left\| \Delta_{TT} \psi \right \|_{W^{k-2, p}_\delta(M)} + \left\| \mathcal{B} \psi \right\|_{W^{k-1-\frac{1}{p}, p}(\bint)} + \|\psi\|_{L^p_{\delta'}}\right).$$
\item Soient $\delta, \delta'$ tels que $\left| \delta - \frac{n-1}{2} \right| < \frac{n+1}{2}$, $\left| \delta' - \frac{n-1}{2} \right|<\frac{n+1}{2}$ avec $\delta - 1 < \delta' < \delta$. Il existe une constante $C > 0$ telle que, $\forall~\psi \in \mathcal{C}^{k, \alpha}_\delta (M, T^*M)$, on a :
$$\| \psi \|_{\mathcal{C}^{k, \alpha}_\delta} \leq C \left( \left\| \Delta_{TT} \psi \right \|_{\mathcal{C}^{k-2, \alpha}_\delta(M)}
+ \left\| \mathcal{B} \psi \right\|_{\mathcal{C}^{k-1, \alpha}(\bint)} + \|\psi\|_{\mathcal{C}^{0, 0}_{\delta'}}\right).$$
\end{itemize}
\end{lemma}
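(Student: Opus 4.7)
La stratégie est celle développée dans \cite{LeeFredholm} pour le laplacien scalaire, adaptée ici au laplacien vectoriel $\Delta_{TT}$ avec la condition au bord $\mathcal{B}$. Comme remarqué avant l'énoncé, $\Delta_{TT}$ est elliptique et $\mathcal{B}$ satisfait la condition de complémentarité au sens d'Agmon--Douglis--Nirenberg. On peut donc appliquer localement les estimations elliptiques ADN. La preuve se déroule en deux temps : d'abord une estimation intermédiaire de type
\begin{equation*}
\|\psi\|_{W^{k,p}_\delta} \leq C\left(\|\Delta_{TT}\psi\|_{W^{k-2,p}_\delta} + \|\mathcal{B}\psi\|_{W^{k-1-1/p,p}(\bint)} + \|\psi\|_{L^p_\delta}\right),
\end{equation*}
puis un affaiblissement du terme $\|\psi\|_{L^p_\delta}$ en $\|\psi\|_{L^p_{\delta'}}$ par analyse indicielle.

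Pour la première étape, on recouvre $M$ par les boules de Möbius $\left(\phi^1_{p_0}\right)^{-1}(B_1)$ (avec $p_0$ parcourant un ensemble discret suffisamment dense de $M \setminus \bint$), complétées par un voisinage tubulaire fixe de $\bint$, qui est compact. Sur chaque boule de Möbius, le pull-back de $g$ est $\mathcal{C}^{l,\beta}$-proche de la métrique hyperbolique standard, uniformément en $p_0$, ce qui fournit par les estimations intérieures d'ADN pour systèmes elliptiques une majoration
\begin{equation*}
\left\|\left(\left(\phi^1_{p_0}\right)^{-1}\right)^*\psi\right\|_{W^{k,p}(B_1)}
\leq C\left( \left\|\left(\left(\phi^1_{p_0}\right)^{-1}\right)^*\Delta_{TT}\psi\right\|_{W^{k-2,p}(B_2)}
+ \left\|\left(\left(\phi^1_{p_0}\right)^{-1}\right)^*\psi\right\|_{L^p(B_2)}\right)
\end{equation*}
avec $C$ indépendant de $p_0$. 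Pour le voisinage de $\bint$, les estimations au bord d'ADN (par exemple \cite[Lemme 6.29]{GilbargTrudinger} pour le cas Hölder) font apparaître le terme $\|\mathcal{B}\psi\|$. Après multiplication par $\rho(p_0)^{-\delta}$ et sommation en puissance $p$ sur le recouvrement, on en déduit l'estimation intermédiaire ci-dessus.

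La seconde étape, qui constitue le cœur technique, consiste à remplacer $\|\psi\|_{L^p_\delta}$ par $\|\psi\|_{L^p_{\delta'}}$ au prix d'un léger affaiblissement du poids. C'est ici qu'intervient la structure indicielle établie au Lemme \ref{lmExpoCrit} : la condition $\left|\delta + \frac{n-1}{p} - \frac{n-1}{2}\right| < \frac{n+1}{2}$ assure que $s=\delta + \frac{n-1}{p}$ n'est pas exposant critique de $\Delta_{TT}$, donc que l'application indiciale $I_s(\Delta_{TT})$ est un isomorphisme. Suivant \cite[Theorem C]{LeeFredholm}, on construit, sur un voisinage $M_{\sigma_0}$ de $\binf$ assez petit, une paramétrix $Q$ pour le modèle indiciel, continue de $L^p_\delta(M_{\sigma_0})$ dans $W^{2,p}_\delta(M_{\sigma_0})$. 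Si $\chi$ est une fonction de troncature égale à $1$ près de $\binf$ et à support dans $M_{\sigma_0}$, en appliquant $Q$ à $\chi\psi$ on obtient
\begin{equation*}
\|\chi\psi\|_{W^{2,p}_\delta} \leq C\left(\|\chi\Delta_{TT}\psi\|_{L^p_\delta} + \|[\Delta_{TT},\chi]\psi\|_{L^p_\delta}\right),
\end{equation*}
et les commutateurs $[\Delta_{TT},\chi]\psi$ sont supportés dans une zone où $\rho$ est minorée, donc majorés par $\|\psi\|_{L^p_{\delta'}}$ (puisque $\delta'<\delta$). Combiné à l'estimation intermédiaire précédente appliquée à $(1-\chi)\psi$, supportée dans un compact où $\|\cdot\|_{L^p_\delta} \simeq \|\cdot\|_{L^p_{\delta'}}$, ceci conclut.

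L'obstacle principal est la construction de la paramétrix pour le modèle indiciel de $\Delta_{TT}$ : il faut vérifier soigneusement que l'opérateur $I_s(\Delta_{TT})$ calculé dans la preuve du Lemme \ref{lmExpoCrit}, qui agit de manière non-triviale sur la composante $\psi(\nabbar\rho)d\rho$ et différemment sur son complément, reste inversible dans toute la bande indiciale, et que l'inverse fournit un opérateur borné au niveau des espaces de Sobolev pondérés. La preuve dans le cas des espaces de Hölder est formellement identique en utilisant les estimations de Schauder internes \cite[Lemme 4.8]{LeeFredholm} et au bord \cite[Lemme 6.29]{GilbargTrudinger} à la place des estimations Sobolev.
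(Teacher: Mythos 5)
Your approach is essentially the same as the paper's: both rest on the combination of Agmon--Douglis--Nirenberg estimates for the elliptic boundary value problem $(\Delta_{TT},\mathcal{B})$ near the compact inner boundary $\bint$, Lee's parametrix machinery (Theorem C and its Corollary 6.3 in \cite{LeeFredholm}) on the asymptotically hyperbolic end to gain the weight improvement from $\delta$ to $\delta'$, and an interpolation inequality to drop the intermediate Sobolev order on the remainder term. The ellipticity and boundary complementarity facts you invoke are exactly the ones the paper records just before the statement. The only structural difference is in bookkeeping: you first establish a global estimate with the unimproved weight $\|\psi\|_{L^p_\delta}$ via a Möbius-ball covering and then improve it with a cutoff/commutator argument near $\binf$, whereas the paper cuts first ($\psi = u_\infty\psi + (1-u_\infty)\psi$) and estimates each piece directly, applying Lee's Corollaire 6.3 to $\psi_\infty$ (which already lands the error term in the weaker weight $\delta'$) and ADN to the compactly supported $\psi_0$. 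Your route essentially re-derives the content of Lee's Corollary 6.3 rather than citing it; the underlying indicial analysis you sketch (inversion of $I_s(\Delta_{TT})$ inside the indicial band given by Lemme \ref{lmExpoCrit}) is what makes Lee's parametrix work, so the two arguments are mathematically equivalent. One small point: the commutator term $[\Delta_{TT},\chi]\psi$ in your second step still contains a first-order derivative of $\psi$ on the transition region, so you need interpolation there too to reduce it to $\|\psi\|_{L^p_{\delta'}}$ — the paper handles this uniformly at the end with a single interpolation step.
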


\begin{proof} On choisit une fonction de troncature lisse $u_\infty$ sur $\Mbar$ \`a valeurs dans $[0; 1]$, telle que $u_\infty = 1$ sur $\binf$ et $u_\infty = 0$ au voisinage de $\bint$. On pose $\psi_\infty = u_\infty \psi$ et $\psi_0 = (1-u_\infty) \psi$. $\psi_\infty$ est nulle au voisinage de $\bint$. Une application du corollaire 6.3 de \cite{LeeFredholm} donne alors :
\begin{eqnarray*}
\|\psi_\infty\|_{W^{k, p}_\delta} & \leq & \left\|\widetilde{Q}\Delta_{TT}\psi_\infty\right\|_{W^{k, p}_\delta} + \left\|\widetilde{T}\psi_\infty\right\|_{W^{k, p}_\delta}\\
				  & \leq & C  \left( \left\|\Delta_{TT}\psi_\infty\right\|_{W^{k-2, p}_\delta} + \left\|\psi_\infty \right\|_{W^{k-1, p}_{\delta'}} \right)\\
				  & \leq & C' \left( \left\|\Delta_{TT}\psi_\infty\right\|_{W^{k-2, p}_\delta} + \left\|\psi_\infty\right\|_{L^p_{\delta'}} \right)\\
\end{eqnarray*}
\noindent o\`u on a utilis\'e l'in\'egalit\'e d'interpolation \cite{And93} :
$$\|\psi_\infty\|_{W^{k-1, p}_{\delta'}} \leq C(\epsilon) \|\psi_\infty\|_{L^p_{\delta'}} + \epsilon \|\psi_\infty\|_{W^{k, p}_{\delta'}}.$$

Les op\'erateurs qui apparaissent ici sont ceux du corollaire 6.3 de \cite{LeeFredholm} :
\begin{eqnarray*}
\widetilde{Q} & : & W^{k-2, p}_\delta \to W^{k, p}_\delta\\
\widetilde{T} & : & W^{k-1, p}_\delta \to W^{k, p}_{\delta'},\\
\end{eqnarray*}
\noindent ils v\'erifient $\widetilde{Q} \Delta_{TT} \psi = \psi + \widetilde{T} \psi$. De m\^eme, comme $\psi_0$ est \`a support compact et que $\mathcal{B}$ satisfait la condition de compl\'ementarit\'e au bord (voir page \pageref{condComplBord}), on peut utiliser les r\'esultats de \cite{ADN} :
$$
\|\psi_0\|_{W^{k, p}} \leq C \left( \left\|\Delta_{TT}\psi_0\right\|_{W^{k-2, p}}+\left\|\mathcal{B}\psi_0\right\|_{W^{k-1-\frac{1}{p}, p}(\bint)}+
\left\| \psi_0\right\|_{L^p}\right).
$$

En additionant et en utilisant \`a nouveau l'in\'egalit\'e d'interpolation :
$$\| \psi \|_{W^{k, p}_\delta} \leq C \left( \left\| \Delta_{TT} \psi \right \|_{W^{k-2, p}_\delta(M)} + \left\| \mathcal{B} \psi \right\|_{W^{k-1-\frac{1}{p}, p}(\bint)} +	\|\psi\|_{L^p_{\delta'}}\right).$$
\end{proof}

\begin{cor}
$\mathcal{P}$ est semi-Fredholm (i.e. $\ker~\mathcal{P}$ est de dimension finie et $\mathrm{Im}~\mathcal{P}$ est ferm\'ee).
\end{cor}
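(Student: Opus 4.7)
Le plan est d'en d\'eduire que $\mathcal{P}$ est semi-Fredholm par un argument fonctionnel classique, bas\'e sur l'estimation a priori du lemme pr\'ec\'edent combin\'ee avec la compacit\'e de l'injection $W^{k, p}_\delta \hookrightarrow L^p_{\delta'}$ pour $\delta' < \delta$. Cette compacit\'e est un r\'esultat standard sur les vari\'et\'es asymptotiquement hyperboliques (voir par exemple \cite{LeeFredholm}), qui repose sur le th\'eor\`eme de Rellich-Kondrachov local appliqu\'e dans chaque carte de M\"obius et sur le fait que le poids $\rho^{\delta-\delta'}$ tend vers $0$ \`a l'infini.

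Premi\`erement, je montrerais que $\ker \mathcal{P}$ est de dimension finie. Soit $(\psi_n)$ une suite born\'ee dans $\ker \mathcal{P}$ (pour la norme $W^{k,p}_\delta$). L'estimation a priori du lemme pr\'ec\'edent appliqu\'ee \`a $\psi_n$ donne imm\'ediatement
$$\|\psi_n\|_{W^{k,p}_\delta} \leq C \|\psi_n\|_{L^p_{\delta'}}.$$
Par compacit\'e de l'injection $W^{k,p}_\delta \hookrightarrow L^p_{\delta'}$, on peut extraire une sous-suite $(\psi_{n_j})$ qui est de Cauchy dans $L^p_{\delta'}$. En appliquant l'estimation \`a $\psi_{n_j} - \psi_{n_k}$ (qui est \'egalement dans $\ker \mathcal{P}$), on voit que $(\psi_{n_j})$ est de Cauchy dans $W^{k,p}_\delta$, donc convergente. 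La boule unit\'e ferm\'ee de $\ker \mathcal{P}$ est donc compacte, ce qui entra\^ine $\dim \ker \mathcal{P} < \infty$ par le th\'eor\`eme de Riesz.

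Deuxi\`emement, pour montrer que $\mathrm{Im}\,\mathcal{P}$ est ferm\'ee, je choisirais un suppl\'ementaire topologique ferm\'e $N$ de $\ker \mathcal{P}$ dans $W^{k, p}_\delta$ (possible car $\ker \mathcal{P}$ est de dimension finie). Il suffit alors d'\'etablir une minoration du type
$$\|\psi\|_{W^{k,p}_\delta} \leq C \|\mathcal{P}\psi\|_{W^{k-2, p}_\delta \times W^{k-1-\frac{1}{p}, p}} \qquad \forall~\psi \in N.$$
Je raisonnerais par l'absurde~: si une telle minoration n'existe pas, il existerait une suite $(\psi_n) \subset N$ avec $\|\psi_n\|_{W^{k,p}_\delta} = 1$ et $\mathcal{P}\psi_n \to 0$. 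Par compacit\'e de l'injection, une sous-suite converge dans $L^p_{\delta'}$ vers une limite $\psi$, puis, par l'estimation a priori, converge dans $W^{k,p}_\delta$ vers $\psi \in N$ avec $\|\psi\|_{W^{k,p}_\delta} = 1$ et $\mathcal{P}\psi = 0$, c'est-\`a-dire $\psi \in \ker \mathcal{P} \cap N = \{0\}$ : absurde. On conclut par l'argument classique que $\mathcal{P}$ restreint \`a $N$ est un hom\'eomorphisme sur son image, qui est donc ferm\'ee (et co\"incide avec $\mathrm{Im}\,\mathcal{P}$ tout entier).

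Aucune \'etape ne devrait pr\'esenter d'obstacle s\'erieux~: il s'agit du sch\'ema standard qui d\'eduit la semi-Fredholmit\'e d'une estimation a priori elliptique avec terme compact d'ordre inf\'erieur. Le seul point d\'elicat est la compacit\'e de l'injection $W^{k,p}_\delta \hookrightarrow L^p_{\delta'}$ sur la vari\'et\'e non compacte $M$ lorsque $\delta' < \delta$, mais celle-ci est d\'ej\`a \'etablie dans \cite{LeeFredholm} et le cas avec bord interne $\bint$ s'en d\'eduit sans difficult\'e puisque $\bint$ est compact. Le cas des espaces de H\"older se traite de fa\c con analogue en utilisant la compacit\'e de $\mathcal{C}^{k, \alpha}_\delta \hookrightarrow \mathcal{C}^{0, 0}_{\delta'}$.
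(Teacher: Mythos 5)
Votre preuve est correcte et suit essentiellement le même schéma que celle du papier : caractère fini de $\ker\mathcal{P}$ via l'estimation a priori combinée à la compacité de l'injection $W^{k,p}_\delta \hookrightarrow L^p_{\delta'}$, puis argument par l'absurde pour obtenir une minoration sur un supplémentaire fermé, ce qui donne la fermeture de l'image. La seule différence est cosmétique (notation $N$ au lieu de $F$, invocation explicite du théorème de Riesz).
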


\begin{proof} La d\'emonstration est standard (voir par exemple \cite{Maxwell}). On montre tout d'abord que $\ker~\mathcal{P}$ est de dimension finie. En effet, soit $\psi_i \in \ker~\mathcal{P}$ une suite d'\'el\'ements dans $\ker~\mathcal{P}$. L'injection $W^{k, p}_\delta \into L^p_{\delta'}$ est compacte (car $\delta' < \delta$) on peut donc supposer que la suite converge dans $L^p_{\delta'}$. Cette suite est en particulier de Cauchy, or :
$$\| \psi_i-\psi_j \|_{W^{k, p}_\delta} \leq C \|\psi_i-\psi_j\|_{L^p_{\delta'}}.$$
La suite des $\psi_i$ est donc de Cauchy dans $W^{k, p}_\delta$. Elle converge. Ceci prouve que la boule unit\'e de $\ker~\mathcal{P}$ est compacte et $\ker~\mathcal{P}$ est de dimension finie. $\ker~\mathcal{P}$ admet donc un compl\'ementaire ferm\'e $F$ dans
$W^{k, p}_\delta$. Montrons maintenant qu'il existe une constante $\widetilde{C}$ telle que $\forall \psi \in F$ :
$$\|\psi\|_{W^{k, p}_\delta}\leq\widetilde{C}\left(\left\|\Delta_{TT}\psi\right\|_{W^{k-2, p}_\delta(M)}+\left\|\mathcal{B}\psi\right\|_{W^{k-1-\frac{1}{p}, p}(\bint)}\right).$$
Dans le cas contraire, on peut trouver une suite $\psi_i \in F$ telle que $\|\psi_i\|_{W^{k-2, p}_\delta(M)} = 1$ et
$\left\|\Delta_{TT}\psi\right\|_{W^{k-2, p}_\delta(M)}+\left\|\mathcal{B}\psi\right\|_{W^{k-1-\frac{1}{p}, p}(\bint)} \leq \frac{1}{i}$. Quitte \`a extraire une sous-suite, on peut supposer que la suite des $\psi_i$ converge dans $L^p_{\delta'}$. On a alors :
$$\|\psi_i-\psi_j\|_{W^{k, p}_\delta} \leq C \left( \frac{1}{i} + \frac{1}{j} + \|\psi_i - \psi_j\|_{L^p_{\delta'}}\right).$$

La suite des $\psi_i$ est donc de Cauchy dans $W^{k, p}_\delta$. Elle converge vers un \'el\'ement de norme 1 tel que $\mathcal{P} \psi = 0$. Absurde. On en d\'eduit finalement que $\mathrm{Im}~\mathcal{P}$ est ferm\'ee (en effet, $\mathcal{P} : F \to \mathrm{Im}~\mathcal{P}$ est un isomorphisme bicontinu).
\end{proof}

Avant de prouver que $\mathcal{P}$ est un isomorphisme, nous allons tout d'abord montrer que :

\begin{lemma}\label{RegEllip2} Soit $\psi \in L^{p_0}_{\delta_0}$ avec $1 < p_0 < \infty$ et $\left| \delta_0 + \frac{n-1}{p_0} - \frac{n-1}{2} \right| < \frac{n+1}{2}$.
\begin{enumerate}
\item Si :
$$
\left\lbrace
\begin{array}{l}
\Delta_{TT} \psi \in W^{k-2, p}_{\delta}\\
\mathcal{B} \psi \in W^{k-1-\frac{1}{p}, p}
\end{array}
\right.
$$
\noindent avec $2 \leq k \leq l + \beta$, $1 < p < \infty$ et $\left| \delta + \frac{n-1}{p} - \frac{n-1}{2} \right| < \frac{n+1}{2}$,
Alors $\psi \in W^{k, p}_\delta$.\\
\item Si :
$$
\left\lbrace
\begin{array}{l}
\Delta_{TT} \psi \in \mathcal{C}^{k-2, \alpha}_{\delta}\\
\mathcal{B} \psi \in \mathcal{C}^{k-1, \alpha}
\end{array}
\right.
$$
\noindent avec $0 < \alpha < 1$, $2 \leq k + \alpha \leq l+\beta$ et $\left| \delta - \frac{n-1}{2} \right| < \frac{n+1}{2}$,
Alors $\psi \in \mathcal{C}^{k, \alpha}_\delta$.
\end{enumerate}
\end{lemma}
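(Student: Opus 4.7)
The plan is to localize $\psi$ into a part supported near $\bint$ and a part supported near $\binf$, treat each region with the appropriate elliptic regularity tool, and iterate a weight/integrability bootstrap. Concretely, I would write $\psi = \psi_0 + \psi_\infty$ via a cutoff $u_\infty$ exactly as in the proof of the semi-Fredholm estimate above, so that $\psi_0 = (1-u_\infty)\psi$ is compactly supported in a neighbourhood of $\bint$ and $\psi_\infty = u_\infty \psi$ vanishes near $\bint$.

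For $\psi_0$, since $\mathcal{B}$ satisfies the complementing boundary condition (page \pageref{condComplBord}) and $\Delta_{TT}$ is elliptic, the ADN boundary estimates \cite{ADN} apply on any relatively compact neighbourhood of $\bint$. Starting from the hypothesis $\psi \in L^{p_0}_{\delta_0}$ (which gives local $L^{p_0}$ regularity), iterated application of the boundary a priori estimate gains two derivatives at each step from the data $(\Delta_{TT}\psi,\mathcal{B}\psi)$, with Sobolev embeddings on compact sets used to transfer between integrability exponents $p_0$ and $p$ (respectively between $p_0$ and $\infty$ in the H\"older setting, where the Schauder form of the ADN estimate is invoked instead).

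For $\psi_\infty$, the key input is Corollary~6.3 of \cite{LeeFredholm}. Lemma \ref{lmExpoCrit} yields indicial radius $R=(n+1)/2$, so for every weight $(q,\mu)$ with $|\mu+(n-1)/q-(n-1)/2|<(n+1)/2$ there is a parametrix $\widetilde{Q}:W^{k-2,q}_\mu \to W^{k,q}_\mu$ satisfying
\[
\psi_\infty \;=\; \widetilde{Q}\,\Delta_{TT}\psi_\infty \;-\; \widetilde{T}\psi_\infty,
\]
with $\widetilde{T}$ smoothing in the sense that it maps $W^{k-1,q}_\mu$ into $W^{k,q}_{\mu'}$ for some $\mu'<\mu$. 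The first term lies in $W^{k,q}_\mu$ directly from the hypothesis on $\Delta_{TT}\psi$; the second term improves both regularity and weight, which is what powers the bootstrap.

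The main obstacle is organizing this bootstrap so that it actually reaches the target $(p,\delta)$. I would construct a finite chain of intermediate pairs $(q_i,\mu_i)$ joining $(p_0,\delta_0)$ to $(p,\delta)$ inside the admissible rhombus $|\mu_i+(n-1)/q_i-(n-1)/2|<(n+1)/2$, with consecutive pairs close enough that weighted Sobolev embeddings on M\"obius balls transfer $\psi_\infty$ from step $i$ to step $i+1$. Convexity of the admissible band and the fact that the two endpoints lie strictly inside it ensure such a chain exists; after finitely many steps one obtains $\psi_\infty \in W^{k,p}_\delta$, which combined with the estimate on $\psi_0$ yields $\psi \in W^{k,p}_\delta$. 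The H\"older statement follows by the same mechanism, using the H\"older parametrix of \cite{LeeFredholm} together with the Schauder form of the ADN estimates at $\bint$.
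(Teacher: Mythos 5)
Your proposal follows exactly the paper's strategy: decompose $\psi = \psi_0 + \psi_\infty$ with the same cutoff, handle the compactly supported piece $\psi_0$ via the ADN boundary estimates, and handle $\psi_\infty$ via Lee's weighted elliptic theory. The only difference is that the paper invokes the finished regularity theorems (Proposition~6.5 of \cite{LeeFredholm} and Th\'eor\`eme~10.5 of \cite{ADN}) as black boxes, whereas you reconstruct their content by iterating Corollary~6.3 of \cite{LeeFredholm} and the ADN a priori estimates through a chain of intermediate $(q_i,\mu_i)$; this is correct and amounts to the same argument.
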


\begin{proof} Comme pr\'ec\'edement, on d\'ecompose $\psi = \psi_0 + \psi_\infty$. $\psi_\infty$ est nulle dans un voisinage de $\bint$ donc on peut appliquer la proposition 6.5 de \cite{LeeFredholm} (Remarquons que $\Delta_{TT}\psi_\infty = \Delta_{TT} \psi$ au voisinage de $\binf$) : $\psi_\infty \in W^{k, p}_\delta$. De m\^eme pour $\psi_0$, on peut appliquer le th\'eor\`eme 10.5 de \cite{ADN} : $\psi_0 \in W^{k, p}_c \subset W^{k, p}_\delta$. On en d\'eduit $\psi \in W^{k, p}_\delta$.
\end{proof}

Ceci montre que si $\psi \in \ker~\mathcal{P}$ est dans un certain $L^p_{\delta_0}$ avec $\left| \delta_0 + \frac{n-1}{p_0} - \frac{n-1}{2} \right| < \frac{n+1}{2}$,  alors $\psi \in W^{2, 2}_0$. De plus si $\psi \in \mathcal{C}^{0, 0}_{\delta_0}$, avec $\left| \delta_0 - \frac{n-1}{2} \right| < \frac{n+1}{2}$, on voit que $\psi \in L^p_{\delta_1}$  pour $\delta_1$ l\'eg\`erement plus petit que $\delta_0$ et $p$ grand donc en appliquant ce qui pr\'ec\`ede $\psi \in W^{2, 2}_0$. Ce qui permet de conclure que si $\psi \in \ker~\mathcal{P}$, $\psi \in W^{2, 2}_0$. Montrons finalement le r\'esultat suivant :

\begin{lemma} Soit $M$ une vari\'et\'e asymptotiquement hyperbolique. On suppose qu'il n'existe aucun champ de vecteur de Killing conforme $L^2$ sur $M$. Alors $\mathcal{P}$ est un isomorphisme.
\end{lemma}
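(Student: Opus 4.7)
Le plan consiste \`a \'etablir s\'epar\'ement l'injectivit\'e et la surjectivit\'e de $\mathcal{P}$ ; combin\'ees \`a la propri\'et\'e semi-Fredholm d\'ej\`a d\'emontr\'ee, elles fourniront l'isomorphisme.

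\textbf{Injectivit\'e.} Soit $\psi \in \ker \mathcal{P}$. Le lemme \ref{RegEllip2} appliqu\'e avec $p=2$, $\delta=0$ (poids dans la bande critique) donne d'abord $\psi \in W^{2,2}_0$. Comme $T[\psi] := \rlie_{\psi^\sharp} g$ est sym\'etrique sans trace, on a l'identit\'e ponctuelle $\nabla^i \psi^j\, T_{ij}[\psi] = \tfrac12 |T[\psi]|^2$. En \'ecrivant $\int_M \langle \psi, \Delta_{TT}\psi\rangle$ sous la forme $\int_M \psi^j \nabla^i T_{ij}[\psi]\, d\mu_g$ et en int\'egrant par parties sur une suite exhaustive $M_\sigma$ (l'annulation du terme de bord \`a $\binf$ provenant de la d\'ecroissance $L^2$ de $\psi$), on obtient
\[
0 = \int_M \langle \psi, \Delta_{TT}\psi\rangle\, d\mu_g = -\tfrac12 \int_M |T[\psi]|^2\, d\mu_g + \int_{\bint} \langle \psi, \mathcal{B}\psi\rangle\, d\sigma.
\]
Le terme de bord sur $\bint$ est nul puisque $\mathcal{B}\psi = 0$, donc $T[\psi]=0$ : $\psi^\sharp$ est un champ de Killing conforme $L^2$, et l'hypoth\`ese force $\psi = 0$.

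\textbf{Surjectivit\'e.} Soit $(\phi, \eta)$ un \'el\'ement du dual topologique annulant $\im \mathcal{P}$. Deux int\'egrations par parties successives (utilisant la m\^eme identit\'e sym\'etrique sans trace) dans
\[
\int_M \langle \Delta_{TT}\psi, \phi\rangle\, d\mu_g + \int_{\bint} \langle \mathcal{B}\psi, \eta\rangle\, d\sigma = 0 \qquad \forall\, \psi \in W^{k,p}_\delta,
\]
transforment cette condition en
\[
\int_M \langle \psi, \Delta_{TT}\phi\rangle\, d\mu_g - \int_{\bint}\langle \psi, \mathcal{B}\phi\rangle\, d\sigma + \int_{\bint}\langle \mathcal{B}\psi, \phi + \eta\rangle\, d\sigma = 0.
\]
Faire varier $\psi$ d'abord \`a support compact dans $\mathring{M}$, puis $\psi|_{\bint}$ et $\mathcal{B}\psi|_{\bint}$ ind\'ependamment (prescriptibles librement modulo les compatibilit\'es) donne $\Delta_{TT}\phi = 0$ au sens distributionnel, $\mathcal{B}\phi = 0$ sur $\bint$ et $\eta = -\phi|_{\bint}$. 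La bande critique \'etant invariante sous le passage $(\delta, p) \mapsto (-\delta, p')$, le lemme \ref{RegEllip2}, appliqu\'e dans les espaces duaux au probl\`eme adjoint, fournit $\phi \in W^{2,2}_0$. L'argument d'injectivit\'e appliqu\'e \`a $\phi$ donne alors $\phi = 0$, puis $\eta = 0$ : le conoyau est trivial, et comme l'image est ferm\'ee, $\mathcal{P}$ est surjectif.

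\textbf{Obstacle principal.} Le point d\'elicat se situe dans la surjectivit\'e : il faut v\'erifier rigoureusement la mise en place de la dualit\'e, et notamment que le poids dual $-\delta$ avec $p'$ reste dans la bande critique, afin de pouvoir r\'eappliquer le lemme \ref{RegEllip2} au probl\`eme adjoint distributionnel et ainsi obtenir la r\'egularit\'e $W^{2,2}_0$ n\'ecessaire pour $\phi$. Une subtilit\'e secondaire est la disparition des termes de bord \`a $\binf$ dans les int\'egrations par parties, assur\'ee par l'estimation $L^2$ \`a l'infini d\'ej\`a \'etablie. Une fois ces deux points v\'erifi\'es, la preuve se r\'eduit essentiellement au calcul effectu\'e pour l'injectivit\'e.
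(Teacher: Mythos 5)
Votre argument d'injectivit\'e est exactement celui du papier : $\int_M \langle\psi, \Delta_{TT}\psi\rangle = -\int_M |\rlie_{\psi^\sharp}g|^2$ par int\'egration par parties (le terme de bord sur $\bint$ s'annule puisque $\mathcal{B}\psi=0$, celui sur $\binf$ par d\'ecroissance), donc $\psi^\sharp$ est un champ de Killing conforme $L^2$, nul par hypoth\`ese. Votre identit\'e $\nabla^i\psi^j T_{ij}[\psi]=\tfrac12|T[\psi]|^2$ (cons\'equence de la sym\'etrie et de la trace nulle de $T$) est correcte. Le papier, cependant, commence en remarquant que $\ker\mathcal{P}^{k,p}_\delta = \ker\mathcal{P}^{2,2}_0$ (cons\'equence du lemme \ref{RegEllip2}), ce qui place imm\'ediatement l'\'el\'ement du noyau dans $W^{2,2}_0$ sans avoir \`a justifier l'int\'egration par parties dans un espace \`a poids arbitraire.

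Pour la surjectivit\'e, vous travaillez directement au niveau $W^{k,p}_\delta$ et prenez $(\phi,\eta)$ dans le dual annulant $\im\mathcal{P}$. Votre observation que la bande critique $\left|\delta+\tfrac{n-1}{p}-\tfrac{n-1}{2}\right|<\tfrac{n+1}{2}$ est invariante par $(\delta,p)\mapsto(-\delta,p')$ est exacte et pertinente. Mais il y a ici une lacune r\'eelle que vous signalez vous-m\^eme sans la combler : l'\'el\'ement dual $\phi$ vit a priori dans $W^{-(k-2),p'}_{-\delta}$, un espace de Sobolev d'ordre n\'egatif, alors que le lemme \ref{RegEllip2} n'est \'enonc\'e (et d\'emontr\'e) que pour des donn\'ees $\psi\in L^{p_0}_{\delta_0}$. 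Le passage d'une distribution d'ordre n\'egatif \`a une fonction $L^{p_0}_{\delta_0}$ demanderait une version suppl\'ementaire du lemme de r\'egularit\'e que le papier ne d\'eveloppe pas. Le papier contourne pr\'ecis\'ement cet obstacle : il montre d'abord qu'il suffit de prouver la surjectivit\'e de $\mathcal{P}^{2,2}_0$ (via densit\'e de $\mathcal{C}^{k,0}_c\times\mathcal{C}^{k,0}(\bint)$, r\'egularit\'e elliptique et le fait que l'image est ferm\'ee), puis utilise que le dual de $L^2(M)\times W^{1/2,2}(\bint)$ est $L^2(M)\times H^{-1/2}(\bint)$ et invoque la r\'egularit\'e elliptique de H\"ormander --- valable pour des solutions distributionnelles --- pour \'etablir que l'\'el\'ement $(\widetilde\psi,b)$ du noyau de l'adjoint est de classe $\mathcal{C}^{l,\beta}$ avec $\widetilde\psi\in W^{2,2}_0$. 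C'est cette \'etape de r\'egularit\'e, faite dans un cadre o\`u les outils sont disponibles, qui manque dans votre approche directe.

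Enfin, la structure alg\'ebrique de votre argument adjoint (les deux int\'egrations par parties donnant $\Delta_{TT}\phi=0$, $\mathcal{B}\phi=0$, $\eta=-\phi|_{\bint}$, puis variation ind\'ependante de $\psi|_{\bint}$ et de $\mathcal{B}\psi|_{\bint}$) co\"incide avec celle du papier et est correcte ; c'est uniquement la justification de la r\'egularit\'e de $\phi$ qu'il faudrait compl\'eter, soit en adoptant la r\'eduction pr\'ealable \`a $\mathcal{P}^{2,2}_0$ comme le fait le papier, soit en \'etendant le lemme \ref{RegEllip2} aux distributions d'ordre n\'egatif.
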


\begin{proof} Proc\'edons comme \cite{Maxwell}. Nous savons tout d'abord que le noyau de $\mathcal{P}^{k, l}_\delta$ est \'egal \`a celui de $\mathcal{P}^{2, 2}_0$. Or si $\psi \in \ker~\mathcal{P}^{2, 2}_0$, on a :
\begin{eqnarray*}
0 & = & \int_M \left\langle \psi, \Delta_{TT} \psi \right\rangle\\
  & = & \int_{\bint} \left\langle \psi, \mathcal{B} \psi \right\rangle
      - \int_M \left| \rlie_{\psi^\sharp} g \right|^2_g\\
  & = & - \int_M \left| \rlie_{\psi^\sharp} g \right|^2_g
\end{eqnarray*}
$\psi^\sharp$ est alors un champ de vecteurs de Killing conforme $L^2$. Ceci montre que $\mathcal{P}^{k, p}_\delta$ est injectif. Pour montrer qu'il est surjectif, il suffit de montrer que $\mathcal{P}^{2, 2}_0$ est surjectif. En effet, $\mathcal{C}^{k, 0}_c(M, T^*M) \times \mathcal{C}^{k, 0}(\bint, T^*M)$ est dense dans $W^{k-2, p}_\delta(M, T^*M) \times W^{k-1-\frac{1}{p}, p}\left(\bint, T^*M\right)$. S'il existe une solution $\psi \in W^{2, 2}_0$ au probl\`eme $\Delta_{TT} \psi = u$, $\mathcal{B} \psi = b$ avec $u \in \mathcal{C}^{k, 0}_c(M, T^*M)$ et $b \in \mathcal{C}^{k, 0}(\bint, T^*M)$ alors le lemme \ref{RegEllip2} permet alors de conclure que $\psi \in W^{k, p}_\delta$. Ceci montre alors que $\im \mathcal{P}^{k, p}_\delta \supset \mathcal{C}^{k, 0}_c(M, T^*M) \times \mathcal{C}^{k, 0}(\bint, T^*M)$ est dense dans $W^{k-2, p}_\delta(M, T^*M) \times W^{k-1-\frac{1}{p}, p}\left(\bint, T^*M\right)$.\\

Montrons donc que $\mathcal{P}^{2, 2}_0 : W^{2, 2}_0 \to L^2(M) \times W^{\frac{1}{2}, 2}(\bint)$ est surjectif. Pour cela, il suffit de montrer que son adjoint est injectif. Or le dual de $L^2(M) \times W^{\frac{1}{2}, 2}(\bint)$ est $L^2(M) \times H^{-\frac{1}{2}}(\bint)$. Soit
$(\widetilde{\psi}, b) \in \ker \left(\mathcal{P}^{2,2}_0\right)^*$. On a alors \cite{Hormander3} que $\widetilde{\psi}$ et $b$ sont de classe $\mathcal{C}^{l, \beta}$ avec $\widetilde{\psi} \in W^{2, 2}_0$. Si $\psi$ est \`a support compact sur $\mathring{M}$ :
$$\int_M \left\langle \widetilde{\psi}, \Delta_{TT} \psi \right\rangle_g = 0.$$

L'op\'erateur $\Delta_{TT}$ \'etant formellement autoadjoint, ceci prouve que $\Delta_{TT} \widetilde{\psi} = 0$. Si maintenant $\psi \in W^{2, 2}_0$ est quelconque :
\begin{eqnarray*}
0 & = & \int_{\bint} \left\langle \mathcal{B} \psi, b \right\rangle_g + \int_M \left\langle \widetilde{\psi}, \Delta_{TT} \psi \right\rangle_g\\
  & = & \int_{\bint} \left\langle \mathcal{B} \psi, b + \widetilde{\psi} \right\rangle_g + \int_{\bint} \left\langle \mathcal{B} \widetilde{\psi}, \psi \right\rangle_g.
\end{eqnarray*}

Or il est facile de voir qu'on peut toujours construire des 1-formes $\psi$ r\'eguli\`eres telles que $\psi|_{\bint}$ et $\mathcal{B}\psi$ soient des fonctions (r\'eguli\`eres) donn\'ees. On en d\'eduit donc que $b+\widetilde{\psi} = 0$ et $\mathcal{B}\widetilde{\psi} = 0$. On a ensuite :
\begin{eqnarray*}
0 & = & \int_{\bint} \left\langle \mathcal{B} \widetilde{\psi}, l \right\rangle_g + \int_M \left\langle \widetilde{\psi}, \Delta_{TT} \widetilde{\psi} \right\rangle_g\\
  & = & - \int_M \left| \rlie_{\psi^\sharp} g\right|^2_g.
\end{eqnarray*}

$\widetilde{\psi}$ est un champ de vecteurs de Killing conforme $L^2$. Ce qui montre $\widetilde{\psi} = 0$ et, comme $b+\widetilde{\psi} = 0$, $b=0$ : $\mathcal{P}^{2, 2}_0$ est surjectif.
\end{proof}

\subsubsection{Champs de vecteurs de Killing conformes}
\begin{lemma}Soit $X$ un champ de vecteurs de Killing conforme sur $M$. Si $X \in L^2(M, g)$ alors $X=0$.\end{lemma}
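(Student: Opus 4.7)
The plan is as follows. Set $\psi := X^\flat$. Since $X$ is a conformal Killing field, $\rlie_{\psi^\sharp} g = 0$ on all of $M$. Taking the divergence gives $\Delta_{TT}\psi = \mathrm{div}\bigl(\rlie_{\psi^\sharp} g\bigr) = 0$ on $\mathring{M}$, and restricting to the interior boundary gives $\mathcal{B}\psi = \rlie_{\psi^\sharp} g(\nu,\cdot) = 0$ on $\bint$. Thus $\psi$ lies in the homogeneous kernel of $\mathcal{P}$.

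Second, I would use the elliptic theory for $\mathcal{P}$ to upgrade both regularity and asymptotic decay of $X$. The hypothesis $X \in L^2(M,g)$ reads $\psi \in L^2_0$, and the weight $\delta = 0$ lies strictly inside the admissible range $|\delta| < (n+1)/2$ for $p = 2$. Iterating Lemma \ref{RegEllip2} gives $\psi \in W^{k,2}_0$ for every admissible $k$. Then the indicial root computation of Lemma \ref{lmExpoCrit} identifies the two critical exponents of $\Delta_{TT}$ as $s = -2$ and $s = n-1$; since $\psi \in L^2_0$ is incompatible with the $\rho^{-2}$ asymptotic branch, a standard weight-shifting argument (see \cite{LeeFredholm}) promotes $\psi$ to $W^{k,2}_\delta$ for every $\delta$ in the admissible window, so that $X$ and its derivatives decay fast enough at $\binf$ to kill every boundary integral at infinity in the integration by parts to follow.

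Third, I would derive a Bochner-type identity. Taking the divergence of the CKV equation $\nabla_i X_j + \nabla_j X_i = \tfrac{2}{n}(\mathrm{div}\,X)\,g_{ij}$ and commuting covariant derivatives yields
\[
\Delta X_j = -\tfrac{n-2}{n}\nabla_j(\mathrm{div}\,X) + \ric_j{}^k X_k.
\]
Pairing with $X^j$ and integrating over $M$, with the boundary integrals on $\bint$ vanishing by $\mathcal{B}\psi = 0$ and those at $\binf$ vanishing by the decay from Step 2, I obtain
\[
\int_M \bigl(|\nabla X|^2 + \tfrac{n-2}{n}(\mathrm{div}\,X)^2 + \ric(X,X)\bigr)\, dV_g = 0.
\]

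The main obstacle is closing the argument from this identity to $X \equiv 0$: near infinity $\ric(X,X) \sim -(n-1)|X|^2$, so the identity is not obviously incompatible with a nonzero $X$. I would combine it with the hyperbolic Poincar\'e inequality $\int_M |\nabla X|^2 \geq \tfrac{(n-1)^2}{4}\int_M |X|^2$, which follows on an AH manifold from the spectral gap of the Laplacian on $H^n$ and Kato's inequality applied to $|X|$, and then use the rapid decay of $X$ obtained in Step 2 to absorb the Ricci error term $\ric + (n-1)g$ (which is asymptotically negligible but of uncontrolled sign on compact sets). The delicate interplay between the Poincar\'e constant $(n-1)^2/4$ and the asymptotic factor $n-1$ is the crux of the proof: it is here that one must use the asymptotically hyperbolic structure of $g$ in a sharp way, rather than a bare negative-curvature bound, and possibly refine the scalar Poincar\'e inequality to a sharper estimate on 1-forms in order to conclude $X \equiv 0$ in every dimension.
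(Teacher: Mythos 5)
Your first two steps parallel the paper's opening: recognise $\psi = X^\flat \in \ker\mathcal{P}$ and use elliptic theory (Lemme~\ref{RegEllip2} plus the indicial analysis of Lemme~\ref{lmExpoCrit}) to upgrade regularity and obtain decay. But your third step departs from the paper, and there it breaks down.

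The Bochner--Poincar\'e strategy you outline does not close, and the paper does not attempt it. You say so yourself, and the obstruction you name is real: the hyperbolic Poincar\'e constant $(n-1)^2/4$ exceeds the asymptotic Ricci coefficient $n-1$ only when $n>5$, so even the favourable comparison fails in low dimensions; and, independently of dimension, the ``rapid decay at $\binf$'' you obtain in Step 2 gives no control whatsoever over $\int_K \bigl(\ric+(n-1)g\bigr)(X,X)$ on a compact set $K$, where the tensor $\ric+(n-1)g$ has uncontrolled sign and size. Decay at infinity cannot absorb a compactly supported error, and the conformal Killing field could a priori concentrate on $K$. So even sharpening the Poincar\'e inequality would not rescue the argument as stated. (As a side remark, your integrated identity carries the opposite sign in the Ricci term from the one the paper derives: from $\Delta_{TT}\psi_k = g^{ij}\nabla_i\nabla_j\psi_k + \ric^l{}_k\psi_l + (1-\tfrac{2}{n})\nabla_k\nabla^l\psi_l$ one gets, for $\Delta_{TT}\psi=0$, $\int|\nabla\psi|^2 + \tfrac{n-2}{n}\int(\mathrm{div}\,\psi)^2 = \int\ric(\psi,\psi)$; this is immediate on the exact model $\mathbb H^n$ but again gives nothing on a general AH manifold because of the compact error in $\ric$.)

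The paper's proof uses a completely different idea, which avoids any quantitative balancing. Elliptic regularity and the lemma 3.7 of \cite{LeeFredholm} show that $X$ extends continuously (in fact in $\mathcal{C}^{l,\beta}_{(0)}$) to $\Mbar$; the $L^2$ hypothesis forces $X=0$ on $\binf$, since $dV_g\sim\rho^{-n}dV_{\gbar}$. Because the conformal Killing equation is conformally invariant, $X$ is a CKV for the compactified metric $\gbar$ as well, so $\overline\Delta_{TT}X = 0$ up to $\binf$. Writing the CKV equation in normal/tangential components at a point of $\binf$ and using $X|_{\binf}=0$ shows that \emph{all} first derivatives of $X$ vanish on $\binf$. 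One then extends $(\Mbar,\gbar)$ across $\binf$ to a larger manifold and extends $X$ by zero; the extended field is a $\mathcal C^{1,1}$ CKV vanishing on an open set, so the unique continuation theorem for conformal Killing fields (\cite[Th\'eor\`eme 6.4]{Maxwell}) gives $X\equiv0$. This is qualitative, works in every dimension and for every AH metric, and completely sidesteps the delicate constant comparison you identify as the crux. Replacing your Step 3 by a vanishing-to-high-order argument at $\binf$ followed by unique continuation is the way to finish.
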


\begin{proof} On a tout d'abord $X \in W^{2, 2}_0(M)$. En effet $X$ v\'erifie l'\'equation elliptique $\Delta_{TT} X^\flat = 0$. Un processus standard d'it\'eration montre alors que $X \in \mathcal{C}^{0, \alpha}_0(M)$ , $0 < \alpha < 1$. On en d\'eduit (en utilisant le lemme 3.7 de \cite{LeeFredholm}) que $X \in \mathcal{C}^{0, \alpha}_{\alpha-1} \hookrightarrow \mathcal{C}^{0, \alpha}_{(0)}$. On voit qu'on doit avoir $X = 0$ sur $\binf$ car sinon $X \not\in L^2(M)$. $X$ est ensuite un champ de vecteurs de Killing conforme sur $(\Mbar, \gbar)$. Par la suite nous identifierons le champ $X$ et sa forme duale associ\'ee \`a la m\'etrique $\gbar$, ainsi $X_j=\gbar_{ij}X^i$. En particulier, on a 
$$\overline{\Delta}_{TT} X = 0.$$
Donc $X \in \mathcal{C}^{l, \beta}_{(0)}$. Les d\'eriv\'ees dans des directions tangentes \`a $\binf$ de $X$ sont nulles. Ceci implique que $\nabbar X = 0$ sur $\binf$. En effet, soit $\hat{p} \in \binf$, choisissons une carte $\phi = \left(\rho, \theta^{1}, \cdots, \theta^{n-1}\right)$ au voisinage de $\hat{p}$. Par la suite, on notera $\theta^{0} = \rho$. Quitte \`a red\'efinir les $\theta^i$ pour $i>0$, on peut supposer $\gbar_{ij}(\hat{p}) = \delta_{ij}~(i, j \geq 0)$. Les d\'eriv\'ees $\partial_i X_j(\hat{p})$ pour  $i > 0$, $j \geq 0$ correspondent \`a des d\'erivations de $X$ dans des directions tangentes \`a $\bint$ et sont nulles. De plus comme $X(\hat{p}) = 0$, $\nabla_i X_j = \partial_i X_j$ ($i, j \geq 0$). l'\'equation satisfaite par $X$ en $\hat{p}$ s'\'ecrit :
$$\partial_i X_j + \partial_j X_i - \frac{2}{n} \partial^k X_k \delta_{ij} = 0$$

En particulier, appliqu\'ee \`a $i=0$ et $j > 0$, l'\'equation pr\'ec\'edente devient :
\begin{eqnarray*}
0 & = & \partial_0 X_j + \partial_j X_0\\
  & = & \partial_0 X_j \qquad\textrm{(car $X_0$ est nul le long de $\binf$)}
\end{eqnarray*}

puis \`a $i=j=0$ :
\begin{eqnarray*}
0 & = & 2 \partial_0 X_0 - \frac{2}{n} \partial^k X_k\\
  & = & \left(2 - \frac{2}{n}\right) \partial_0 X_0 - \frac{2}{n} \sum_{k > 0} \partial^k X_k\\
  & = & \left(2 - \frac{2}{n}\right) \partial_0 X_0.
\end{eqnarray*}

Ce qui prouve que toutes les d\'eriv\'ees de $X$ sont nulles en $\hat{p}$.\\

On \'elargit ensuite $(\Mbar, \gbar)$ au niveau de $\binf$ pour avoir une vari\'et\'e $(\widetilde{M}, \widetilde{g})$ $\mathcal{C}^{k, \beta}$ et on prolonge $X$ par $0$ \`a tout $\widetilde{M}$. $\widetilde{X}$ est alors un champ de vecteur de Killing conforme $\mathcal{C}^{1, 1}$, nul sur le collier qu'on a ajout\'e \`a $\Mbar$. Comme $\widetilde{\Delta}_{TT} X = 0$ au sens des distributions, $\widetilde{X} \in \mathcal{C}^{k, \beta}(\widetilde{M})$. On conclut alors, en utilisant le th\'eor\`eme d'unique continuation des champs de vecteurs de Killing conformes \cite[Th\'eor\`eme 6.4]{Maxwell} valable pour les m\'etriques $W^{2, p}$ avec $p > n$, que $\widetilde{X} = 0$.
\end{proof}

On en d\'eduit donc le th\'eor\`eme suivant :

\begin{theorem}\label{isoTT} Soit $(M, g)$ une vari\'et\'e asymptotiquement hyperbolique de classe $\mathcal{C}^{l, \beta}$ avec $l+\beta \geq 2$. Les op\'erateurs
\begin{eqnarray*}
\begin{array}{rcccc}
\mathcal{P}^{k, p}_\delta :
W^{k, p}_\delta(M, T^*M) & \to & W^{k-2, p}_\delta(M, T^*M) & \times & W^{k-1-\frac{1}{p}, p}\left(\bint, T^*M\right)\\
			\psi	       & \mapsto & \left(\Delta_{TT} \psi\right. &,       & \left. \mathcal{B} \psi \right),
\end{array}\\
\begin{array}{rcccc}
\mathcal{P}^{k, \alpha}_\delta :
\mathcal{C}^{k, \alpha}_\delta(M, T^*M) & \to& \mathcal{C}^{k-2, \alpha}_\delta(M, T^*M) & \times & \mathcal{C}^{k-1, \alpha}\left(\bint,T^*M\right)\\
			\psi	       & \mapsto & \left(\Delta_{TT} \psi\right. &,       & \left. \mathcal{B} \psi \right)
\end{array}
\end{eqnarray*}
sont des isomorphismes.
\end{theorem}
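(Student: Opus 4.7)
The plan is to combine the three lemmas of this subsection: the semi-Fredholm property, the surjectivity criterion under absence of $L^2$ conformal Killing fields, and the rigidity lemma showing that no such nontrivial fields exist on an asymptotically hyperbolic manifold. The theorem is essentially a packaging of these results, so the work is really in invoking them in the right order rather than producing new estimates.

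First I would handle the Sobolev case $\mathcal{P}^{k,p}_\delta$. For injectivity, I take $\psi$ in the kernel and apply the regularity lemma (Lemma \ref{RegEllip2}) to bootstrap: any kernel element in $W^{k,p}_\delta$ with the weight $\delta$ in the indicial range lies in $L^p_\delta$ hence, for $p$ large and $\delta$ slightly decreased, in $L^2_{\delta'}$, and then the lemma pushes it back into $W^{2,2}_0$. Integration by parts with the boundary condition $\mathcal{B}\psi = 0$ yields $\int_M |\rlie_{\psi^\sharp} g|^2_g = 0$, so $\psi^\sharp$ is an $L^2$ conformal Killing field. The rigidity lemma then forces $\psi = 0$.

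For surjectivity, I would reduce to the case $\mathcal{P}^{2,2}_0$ using density: smooth compactly-supported data are dense in $W^{k-2,p}_\delta(M,T^*M) \times W^{k-1-1/p,p}(\bint, T^*M)$, so it is enough to find $W^{2,2}_0$ preimages for such data and then apply the regularity lemma \ref{RegEllip2} to upgrade each preimage to $W^{k,p}_\delta$. Surjectivity of $\mathcal{P}^{2,2}_0$ onto $L^2(M) \times W^{1/2,2}(\bint)$ follows by a standard Hilbert space duality argument: since the image is closed (semi-Fredholm), it suffices to show the cokernel is trivial. An element $(\widetilde{\psi}, b)$ in the kernel of the adjoint satisfies $\Delta_{TT}\widetilde{\psi} = 0$ distributionally (tested against compactly supported $\psi$), and then testing against arbitrary $\psi \in W^{2,2}_0$ with prescribed boundary jet yields $b + \widetilde{\psi}|_{\bint} = 0$ and $\mathcal{B}\widetilde{\psi} = 0$. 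Integration by parts again identifies $\widetilde{\psi}^\sharp$ as an $L^2$ conformal Killing field, hence zero by the rigidity lemma, and then $b = 0$ as well.

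The Hölder case $\mathcal{P}^{k,\alpha}_\delta$ is treated in exactly the same manner, using the second part of Lemma \ref{RegEllip2} to exchange Sobolev regularity for Hölder regularity: any kernel element lies in $\mathcal{C}^{0,0}_\delta \subset L^p_{\delta'}$ for suitable $p, \delta'$ in the indicial window, hence again in $W^{2,2}_0$, and the same integration by parts argument closes the case. For surjectivity, density of $\mathcal{C}^{k,0}_c(M,T^*M) \times \mathcal{C}^{k,0}(\bint, T^*M)$ in $\mathcal{C}^{k-2,\alpha}_\delta \times \mathcal{C}^{k-1,\alpha}(\bint)$ is not available in the same way (Hölder spaces are not separable), but one can instead argue directly: given Hölder data, solve in $W^{k,p}_\delta$ for $p$ large enough that $W^{k,p}_\delta \hookrightarrow \mathcal{C}^{k-1,\alpha'}_\delta$, then apply Lemma \ref{RegEllip2} once more to gain the missing Hölder derivative. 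No new obstacle arises; the work has all been done in the preceding three lemmas.
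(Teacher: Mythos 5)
Your proposal follows the paper's own structure: it packages the semi-Fredholm corollary, the regularity lemma \ref{RegEllip2}, the rigidity lemma ruling out $L^2$ conformal Killing fields, and the surjectivity-via-adjoint-of-$\mathcal{P}^{2,2}_0$ argument in exactly the order the paper uses. One genuine addition worth noting: the paper dismisses the H\"older case as \og analogue\fg~to the Sobolev one, but you correctly observe that the density argument for surjectivity does not literally transfer (weighted H\"older spaces are not separable), and your fix — solve in $W^{k,p}_\delta$ with $p$ large enough that $W^{k,p}_\delta \hookrightarrow \mathcal{C}^{k-1,\alpha'}_\delta$, then apply the H\"older regularity half of Lemma \ref{RegEllip2} to recover the last derivative — is the standard and correct way to close this gap. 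This is a real point that the paper glosses over, so your version is slightly more careful than the original.
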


En particulier, appliquant ce th\'eor\`eme \`a l'\'equation
$$\left\lbrace
\begin{array}{l}
0 = \divg L = \Delta_{TT} \psi + \divg L_0\\
b = L(\nu, .) = \mathcal{B} \psi + L_0(\nu, .),
\end{array}
\right. $$
on obtient le th\'eor\`eme suivant de construction des TT-tenseurs :

\begin{theorem}[Construction de TT-tenseurs sur une vari\'et\'e asymptotiquement hyperbolique]\label{thmConstrTT} Soit $(M, g)$ une vari\'et\'e asymptotiquement hyperbolique de classe $\mathcal{C}^{l, \beta}$ avec $l+\beta \geq 2$.
\begin{enumerate}
\item Soient $L_0$ un 2-tenseur sym\'etrique et sans trace avec $L_0 \in W^{k-1, p}_\delta$ avec $2 \leq k \leq l$, $1 < p < \infty$ et $\left|\delta + \frac{n-1}{p} - \frac{n-1}{2} \right| < \frac{n+1}{2}$, et $b \in W^{k-1-\frac{1}{p}, p} (\bint, T^*M)$. Il existe une unique 1-forme $\psi \in W^{k, p}_\delta(M, T^*M)$ telle que, si on d\'efinit $L = L_0 + \rlie_{\psi^\sharp} g$, alors :
\begin{itemize}
\item $L$ est un tenseur sym\'etrique transverse et sans trace
\item $L(\nu, .) = b$ sur $\bint$.
\end{itemize}
\item Soient $L_0$ un 2-tenseur sym\'etrique et sans trace avec $L_0 \in \mathcal{C}^{k-1, \alpha}_\delta$ avec $2 \leq k + \alpha \leq l$,
$0 < \alpha < 1$ et $\left|\delta - \frac{n-1}{2} \right| < \frac{n+1}{2}$, et $b \in \mathcal{C}^{k-1, \alpha} (\bint, T^*M)$. Il existe une unique 1-forme $\psi \in \mathcal{C}^{k, \alpha}_\delta(M, T^*M)$ telle que, si on d\'efinit
$L = L_0 + \rlie_{\psi^\sharp} g$, alors :
\begin{itemize}
\item $L$ est un tenseur sym\'etrique transverse et sans trace
\item $L(\nu, .) = b$ sur $\bint$.
\end{itemize}
\end{enumerate}
\end{theorem}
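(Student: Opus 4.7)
The plan is to reduce the construction to the isomorphism theorem \ref{isoTT} proved just before. Since the operator $\psi \mapsto \rlie_{\psi^\sharp} g$ automatically produces a symmetric traceless tensor, the tensor $L = L_0 + \rlie_{\psi^\sharp} g$ is trivially symmetric and traceless as soon as $L_0$ is. Hence the two conditions to fulfill, $\divg L = 0$ on $\mathring M$ and $L(\nu,\cdot) = b$ on $\bint$, translate into the inhomogeneous boundary value problem
\begin{equation*}
\left\{
\begin{array}{rcl}
\Delta_{TT} \psi & = & -\divg L_0 \quad\text{sur $\mathring M$},\\
\mathcal{B} \psi & = & b - L_0(\nu,\cdot) \quad\text{sur $\bint$}.
\end{array}
\right.
\end{equation*}

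First I would check that the data fall into the target spaces of $\mathcal{P}^{k,p}_\delta$ (resp. $\mathcal{P}^{k,\alpha}_\delta$). In the Sobolev case, the hypothesis $L_0 \in W^{k-1,p}_\delta$ yields $\divg L_0 = \nabla^i L_{0\,ij} \in W^{k-2,p}_\delta$, since the covariant derivative sends $W^{k-1,p}_\delta$ continuously into $W^{k-2,p}_\delta$. Moreover, the standard trace theorem on the compact piece $\rho \geq \sigma_0$ (where weighted and unweighted Sobolev spaces coincide) gives $L_0(\nu,\cdot)|_{\bint} \in W^{k-1-\frac{1}{p},p}(\bint, T^*M)$, so $b - L_0(\nu,\cdot)$ lies in the correct trace space. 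The H\"older case is verbatim the same, using that $\nabla$ sends $\mathcal{C}^{k-1,\alpha}_\delta$ to $\mathcal{C}^{k-2,\alpha}_\delta$ and the boundary restriction gives $\mathcal{C}^{k-1,\alpha}(\bint, T^*M)$.

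Once this is verified, I would simply invoke Theorem \ref{isoTT}: the operator $\mathcal{P}^{k,p}_\delta$ (resp. $\mathcal{P}^{k,\alpha}_\delta$) is an isomorphism under the indicated weight restriction $|\delta + \frac{n-1}{p} - \frac{n-1}{2}| < \frac{n+1}{2}$ (resp. $|\delta - \frac{n-1}{2}| < \frac{n+1}{2}$), hence there is a unique $\psi \in W^{k,p}_\delta(M, T^*M)$ (resp. $\mathcal{C}^{k,\alpha}_\delta(M, T^*M)$) solving the system above. Setting $L := L_0 + \rlie_{\psi^\sharp} g$ provides the desired TT-tensor with prescribed normal component $b$ on the inner boundary, and uniqueness of $\psi$ follows from injectivity of $\mathcal{P}$.

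There is no serious obstacle at this stage: the heavy lifting (ellipticity of $\Delta_{TT}$ with the complementing boundary operator $\mathcal{B}$, computation of indicial roots giving the sharp weight window $\frac{n+1}{2}$, semi-Fredholm estimates, and ruling out $L^2$ conformal Killing fields) has already been carried out in the preceding lemmas. The only point to be slightly careful about is to make sure that our regularity assumption $L_0 \in W^{k-1,p}_\delta$ (one less derivative than $\psi$) indeed provides enough regularity for $\divg L_0$ to live in the target space $W^{k-2,p}_\delta$, which is immediate from the definition of weighted Sobolev spaces.
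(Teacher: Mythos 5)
Your proof is correct and follows essentially the same route as the paper: the paper obtains Theorem \ref{thmConstrTT} immediately by applying Theorem \ref{isoTT} to the system $\Delta_{TT}\psi = -\divg L_0$, $\mathcal{B}\psi = b - L_0(\nu,\cdot)$. Your extra paragraph verifying that $\divg L_0$ and $b - L_0(\nu,\cdot)$ land in the correct target spaces is a reasonable elaboration that the paper leaves implicit.
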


\bibliographystyle{amsalpha}
\bibliography{biblio}

\end{document}